\documentclass[hidelinks,onefignum,onetabnum]{siamart251216}

\usepackage{microtype}
\usepackage{booktabs}
\usepackage{enumitem}
\usepackage{amssymb}
\usepackage{array}
\setlist[itemize]{topsep=0pt, partopsep=0pt}

\usepackage{lipsum}
\usepackage{amsfonts}
\usepackage{graphicx}
\usepackage{epstopdf}
\usepackage{algorithmic}
\usepackage{subcaption}
\usepackage{bm}
\ifpdf
  \DeclareGraphicsExtensions{.eps,.pdf,.png,.jpg}
\else
  \DeclareGraphicsExtensions{.eps}
\fi

\newsiamremark{remark}{Remark}
\newsiamremark{hypothesis}{Hypothesis}
\crefname{hypothesis}{Hypothesis}{Hypotheses}
\newsiamthm{claim}{Claim}
\newsiamremark{fact}{Fact}
\crefname{fact}{Fact}{Facts}

\headers{Polarized Ensemble Kalman Inversion}{H. Yang, J.-L. Wu.}

\title{Polarized Ensemble Kalman Inversion: \\A Derivative-Free Method for Multimodal Inverse Problems with Non-asymptotic Analysis\thanks{Submitted to the editors DATE.
\funding{This work was supported by the U.S. National Science Foundation under Award No. 2615744 and by the Wisconsin Alumni Research Foundation.}}}

\author{
Huchen Yang\thanks{Department of Mechanical Engineering, University of Wisconsin--Madison, Madison, WI
  (\email{huchen.yang@wisc.edu}, \email{jinlong.wu@wisc.edu}).}
\and
Jin-Long Wu \footnotemark[2]
}

\usepackage{amsopn}

\ifpdf
\hypersetup{
  pdftitle={Polarized Ensemble Kalman Inversion: A Derivative-Free Method for Multimodal Inverse Problems with Non-asymptotic Analysis},
  pdfauthor={H. Yang, JL. Wu.}
}
\fi

\usepackage{xr-hyper}
\begin{document}

\maketitle

\begin{abstract}
Inverse problems often admit multiple solutions consistent with the same noisy, indirect observations, especially with sparse data and/or strongly nonlinear forward models. Ensemble Kalman inversion (EKI) provides a robust, derivative-free framework to solve inverse problems, but its global empirical covariances couple all ensemble members across distinct solution modes, limiting its effectiveness for multimodal problems. We introduce polarized ensemble Kalman inversion (PEKI), which constructs distance-weighted local covariances for each ensemble member. This localization weakens contributions from distant, data-consistent regions and enables different clusters of ensemble members to evolve toward distinct solution modes. For the practical algorithm with finite ensemble size and polarization radius, we also establish non-asymptotic bounds and perform numerical studies, showing that cross-cluster effects decay exponentially with cluster separation relative to the polarization radius. For linear forward maps, we further establish particle-wise residual contraction and derive an exact cluster-mean decomposition that separates cross-cluster leakage, within-cluster gain variation, and observation perturbations. These results further motivate a one-step procedure for choosing the polarization radius by reusing stored forward-model evaluations. We studied the performance of PEKI on several numerical experiments, including low-dimensional nonconvex benchmarks, a problem equipped with many-mode landscape, and a high-dimensional inverse problem governed by Navier--Stokes equations. The results confirm the theoretical exponential decay of cross-cluster effects and show that PEKI can serve as a robust and derivative-free method for inverse problems with multiple solution modes.
\end{abstract}

\begin{keywords}
Inverse problems, Multimodal solution, Derivative-free methods, Non-asymptotic analysis, Hyperparameter tuning
\end{keywords}

\begin{MSCcodes}
65M32, 90C56, 90C26
\end{MSCcodes}

\section{Introduction}

Many scientific computing tasks involve solving inverse problems, e.g., when unknown parameters and/or states are inferred from observations and knowledge of a forward model~\cite{stuart2010inverse}. However, the observed data may be consistent with multiple solutions of those unknown parameters and states, possibly because of sparse or incomplete observations, a highly nonlinear, reduced-order, or symmetric forward model, or simply different physical mechanisms that lead to similar system behavior~\cite{beven2001equifinality,kaipio2007statistical,eldredge2026practical}. Typical examples include locating acoustic or pollutant sources from sparse sensor measurements~\cite{keats2007bayesian}, recovering scatterer locations or shapes from wave measurements~\cite{colton2003inverse}, inferring subsurface permeability fields from well data~\cite{oliver2008inverse}, and identifying coherent flow structures from pressure or velocity measurements~\cite{eldredge2024bayesian}. Failing to identify multiple data-consistent solutions can mislead subsequent interpretation, prediction, or decision-making~\cite{stuart2010inverse,beven2001equifinality}.

A variety of approaches have been developed for multimodal inverse problems. From a Bayesian perspective, sampling methods such as Markov chain Monte Carlo (MCMC) and sequential Monte Carlo (SMC) can represent multiple solutions within the posterior distribution~\cite{sambridge2002monte, del2006sequential}, but require many forward-model evaluations and can have difficulty in adequately exploring well-separated modes~\cite{vousden2016dynamic, paulin2019error}. If the goal is to locate multiple solutions rather than characterize the posterior distributions as Bayesian inverse problems, one can formulate the problem as the minimization of a data-misfit or regularized objective, for which gradient-based methods efficiently identify a local minimizer~\cite{haber2000optimization, givoli2021tutorial}. However, their use for certain types of forward models is limited when gradient information is unavailable, expensive to obtain, or noisy and unreliable~\cite{larson2019derivative}. Moreover, identifying multiple solutions via gradient-based methods generally requires repeated optimization from different initial guesses~\cite{marti2025multi}, and the guarantee of identifying all possible solutions often requires strong assumptions on the initialization.

For inverse problems with computationally intensive forward models, ensemble Kalman methods provide an attractive alternative because they are derivative-free and the forward model evaluations can be easily parallelized across ensemble members~\cite{iglesias2013ensemble,kovachki2019ensemble,gjini2026ensemble}. Their standard formulation, however, is not designed to identify multiple solutions~\cite{dovera2011multimodal}. Standard ensemble Kalman inversion (EKI) computes global empirical means and covariance matrices from the entire ensemble and uses the same statistics to update all ensemble members~\cite{grumitt2024flow}. When the ensemble spans several separated solutions, these global statistics can mix information associated with different solutions, causing the resulting update directions to reflect a compromise among them rather than guiding ensemble members toward the distinct solutions.

In this work, we present a systematic study of polarized ensemble Kalman inversion (PEKI), which is built on several recent works~\cite{bungert2025polarized, wacker2026perspectives} and serves as a practical locally weighted ensemble Kalman method for identifying multiple plausible solutions. In addition, we propose an efficient hyperparameter tuning strategy and present non-asymptotic theoretical analysis for finite ensemble size and polarized radius. Specifically, PEKI updates each ensemble member using its own local covariance matrices. By assigning smaller weights to distant ensemble members, PEKI reduces the influence of ensemble members near distant solutions, allowing each ensemble member to evolve primarily toward a nearby solution. Consequently, different clusters of the ensemble members can evolve toward distinct solutions, enabling multiple plausible solutions to be identified within a single ensemble. 


The key highlights of this work are summarized as follows:
\begin{itemize}
\item We develop PEKI as a practical finite-ensemble method for multimodal inverse problems using particle-dependent polarized covariance matrices. In addition, we establish an efficient strategy for selecting the polarization radius that requires no additional forward-model evaluations.
\item We establish non-asymptotic theoretical results for finite ensembles and finite polarization radii. We show that cross-cluster interactions are exponentially suppressed as cluster separation increases relative to the polarization radius. For linear forward maps, we further characterize particle-wise residual non-expansion and derive an exact decomposition of cluster-mean dynamics.
\item We systematically evaluate the performance of PEKI on numerical examples with up to 128 parameters and 64 competing solution modes, showcasing practical polarization-radius selection, providing empirical evidence for the predicted cross-cluster suppression and the sharpness of its leading exponential scale, and demonstrating convergence toward the identified solutions.
\end{itemize}

This paper is organized as follows. Section~\ref{sec:related_work} reviews earlier approaches to localization and multimodality in ensemble Kalman methods and clarifies the relation between PEKI and the existing methods. Section~\ref{sec: Methodology} presents the PEKI formulation and theoretical analysis. Section~\ref{sec: choose} presents the practical polarization-radius selection strategy. Section~\ref{sec: Numerical Results} provides the numerical results. Section~\ref{Conclusion} concludes the paper. 


\section{Related Work}
\label{sec:related_work}

\subsection{Particle-dependent local update in ensemble Kalman methods}

Earlier ensemble filtering studies had already explored member-wise local updates, in which neighboring ensemble members are used to construct a member-specific update~\cite{anderson2003local, metref2014non}. A prominent practical example is the iterative local updating ensemble smoother (ILUES)~\cite{zhang2018iterative}. For each target particle, ILUES constructs a local ensemble using discrepancies in both parameter and observation spaces, applies an ensemble-smoother update to this local ensemble, and then replaces the target particle with a randomly selected member of the updated local ensemble. Subsequent applications have demonstrated that local covariance information can help maintain multiple solutions without prescribing a fixed number of modes~\cite{mo2019deep,zheng2023efficient,zhang2024integration}. At the same time, however, ILUES relies on hard, fixed-cardinality neighborhood selection, which makes the geometric interaction range dependent on the empirical particle density and allows the selected neighborhood to change discontinuously.


Smooth particle-dependent interactions, previously introduced through polarization in interacting particle systems~\cite{bungert2025polarized}, provide an alternative to the hard local-ensemble selection. Closely related particle-dependent localization had previously been introduced for Bayesian sampling through ensemble Kalman sampling (EKS), particularly for multimodal targets~\cite{reich2021fokker}. The same principle was subsequently developed for optimization-based inversion through locally weighted ensemble Kalman inversion (lwEKI)~\cite{wacker2026perspectives}. lwEKI formulates the resulting dynamics as a system of interacting particle ordinary differential equations. Its theoretical analysis primarily concerns the relation between localized ensemble statistics and the local differential structure of the forward map. In particular, it studies a mean-field and vanishing-bandwidth regime in which a suitably localized ensemble derivative approaches the true local derivative of the forward map. It establishes an important theoretical foundation for particle-dependent localization. However, these results do not directly address the finite-computation regime encountered in practical inversion. Practical inversion, by contrast, operates with a finite ensemble, a finite nonzero bandwidth, and finitely many discrete updates. In this regime, a suitable bandwidth must be chosen, but a practical strategy remains unclear.

Motivated by this issue, PEKI starts from the question: how strongly do particle clusters associated with separated solutions influence one another through the localized Kalman update? This leads to a cluster-separation analysis that quantifies such cross-cluster interactions and, in turn, provides practical guidance for selecting the polarization radius.

\subsection{Other ensemble Kalman approaches to multimodality}

Several other lines of work have extended ensemble Kalman methods beyond a single global Gaussian-like ensemble representation. Gaussian-mixture methods explicitly represent multiple modes, with separate Kalman updates performed for the individual mixture components~\cite{dovera2011multimodal,frei2013mixture}. These methods generally require selecting or controlling the number of components and assigning ensemble members to them~\cite{anderson2024quantile,gryvill2025bayesian}, with some additionally requiring particle reweighting or resampling~\cite{chen2024efficient,chan2023potential,dovera2011multimodal}. Related formulations incorporate Kalman updates into variational methods for multimodal inference~\cite{che2025stable,che2026adaptive}. Across these mixture-based approaches, some mixture-component weights may vanish, while multiple component means collapse onto the same posterior mode, thereby limiting the number of distinct modes effectively represented.

A second strategy introduces correction weights~\cite{papadakis2010data,ding2020ensemble} or combines ensemble Kalman updates with particle filters~\cite{frei2013bridging,shen2015modified,subrahmanya2026copula} to preserve non-Gaussian structure. However, in the Kalman-update part of these methods, all ensemble members often still share the same Kalman gain constructed from covariance matrices of the full ensemble. As a result, ensemble members associated with separated solutions may remain coupled through the global covariance~\cite{chustagulprom2016hybrid}.

Other approaches map the parameters into a latent Gaussian space~\cite{grumitt2024flow,pasmans2026ensemble,zhou2026neural} or augment Kalman methods with score-based diffusion models~\cite{zheng2024ensemble,reimann2026robust}, introducing auxiliary representations or learned models beyond the basic ensemble Kalman update.

\section{Methodology}\label{sec: Methodology}

\subsection{General inverse problem}

We consider the recovery of an unknown parameter \(\theta\in\mathbb{R}^{d_\theta}\) from noisy observations \(y\in\mathbb{R}^{d_y}\),
\begin{equation}
    y=\mathcal{G}(\theta^\dagger)+\eta,
    \qquad
    \eta\sim\mathcal{N}(0,\Gamma),
    \label{eq:observation_model}
\end{equation}
where \(\theta^\dagger\) is the true parameter,
\(\mathcal{G}:\mathbb{R}^{d_\theta}\to\mathbb{R}^{d_y}\) is the forward map, and \(\Gamma\) is the observational noise covariance. With prior density
\(\pi_0\), the posterior density is
\begin{equation}
    \pi^y(\theta)
    =\frac{1}{Z(y)}\exp\bigl(-\Phi(\theta;y)\bigr)\pi_0(\theta),
    \qquad
    \Phi(\theta;y)
    =\frac12\left\|\Gamma^{-1/2}
    \bigl(\mathcal{G}(\theta)-y\bigr)\right\|^2,
    \label{eq:posterior_density}
\end{equation}
where \(Z(y)\) is the normalizing constant and $\Phi(\theta;y)$ is the data-misfit function. Within this Bayesian formulation, EKI provides an ensemble-based, derivative-free approach to the inverse problem by iteratively seeking parameters with reduced data misfit~\cite{kovachki2019ensemble},
\[\theta^\ast \in \operatorname*{arg\,min}_{\theta}\Phi(\theta;y),\]
rather than by directly representing the posterior distribution. 

For nonlinear forward maps, several separated parameter values may be consistent with the data, resulting in a multimodal inverse problem. In this setting, a globally Kalman-updated ensemble can be misleading: it would retain only one candidate solution, or its mean can lie between several solutions. Our objective is therefore to enable a single ensemble to identify multiple data-consistent solutions.

\subsection{Polarized ensemble Kalman inversion}

The central modification in polarized ensemble Kalman inversion (PEKI) is to replace the global empirical covariances of EKI with covariances specialized for each target particle. Throughout this paper, we use the terms \emph{particle} and \emph{ensemble member} interchangeably. Let \(\{\theta_n^{(j)}\}_{j=1}^J\) be the ensemble at iteration \(n\), let \(g_n^{(j)}=\mathcal{G}(\theta_n^{(j)})\), and let \(N\) be the total number of iterations. For a target particle \(\theta_n^{(i)}\), define
\[
\begin{aligned}
    d_{ij,n}^2
    &:=\left\|\theta_n^{(i)}-\theta_n^{(j)}\right\|_{M_n}^2
      =\bigl(\theta_n^{(i)}-\theta_n^{(j)}\bigr)^\top
       M_n\bigl(\theta_n^{(i)}-\theta_n^{(j)}\bigr),
    \\
    \widetilde w_{ij,n}
    &:=\exp\left(-\frac{d_{ij,n}^2}{2\kappa^2}\right),
    \qquad
    w_{ij,n}
    :=\frac{\widetilde w_{ij,n}}
    {\sum_{\ell=1}^J\widetilde w_{i\ell,n}},
    \label{eq:peki_weights}
\end{aligned}
\]
where \(\kappa>0\) is the polarization radius and \(M_n\succ0\) specifies the distance metric. We use \(M_n=I\) unless a problem-dependent scaling is needed. For example, a Gaussian prior \(\mathcal{N}(m_0,C_0)\) naturally suggests \(M_n=C_0^{-1}\).

The corresponding local means and covariance denominator are
\[\bar\theta_{i,n}:=\sum_{j=1}^Jw_{ij,n}\theta_n^{(j)},
    \qquad
    \bar g_{i,n}:=\sum_{j=1}^Jw_{ij,n}g_n^{(j)},
    \qquad
    D_{i,n}:=1-\sum_{j=1}^Jw_{ij,n}^2.\]
Whenever \(D_{i,n}>0\), the particle-dependent cross-covariance and data covariance are
\begin{equation}\label{eq:peki_cov_theta_g}
    \begin{aligned}
    C_{i,n}^{\theta g}
    &:=\frac{1}{D_{i,n}}
    \sum_{j=1}^Jw_{ij,n}
    \bigl(\theta_n^{(j)}-\bar\theta_{i,n}\bigr)
    \bigl(g_n^{(j)}-\bar g_{i,n}\bigr)^\top,
    \\
    C_{i,n}^{gg}
    &:=\frac{1}{D_{i,n}}
    \sum_{j=1}^Jw_{ij,n}
    \bigl(g_n^{(j)}-\bar g_{i,n}\bigr)
    \bigl(g_n^{(j)}-\bar g_{i,n}\bigr)^\top.
\end{aligned}
\end{equation}
Define the particle-dependent Kalman gain
\[
    K_{i,n}:=C_{i,n}^{\theta g}
    \bigl(C_{i,n}^{gg}+R_N\bigr)^{-1}, \quad R_N:=N\Gamma+\lambda I,
\]
where \(\lambda\geq0\) is a regularization parameter, and assume throughout that \(R_N\succ0\).
PEKI then applies the update
\begin{equation}
    \theta_{n+1}^{(i)}
    =\theta_n^{(i)}
    +K_{i,n}
    \bigl(y_n^{(i)}-g_n^{(i)}\bigr),
    \qquad
    y_n^{(i)}=y+\xi_n^{(i)},
    \quad
    \xi_n^{(i)}\sim\mathcal{N}(0,N\Gamma),
    \label{eq: peki update}
\end{equation}
for \(i=1,\ldots,J\). In the deterministic formulation, \(\xi_n^{(i)}=0\).

After \(N\) iterations, separated clusters in the final ensemble, together with their empirical means, provide representative candidate solutions. Various clustering methods can be used for this post-processing step; in the numerical examples, we use HDBSCAN~\cite{campello2015hierarchical, McInnes2017} to identify the particle clusters.

The polarization radius \(\kappa\) is the only hyperparameter that directly controls polarization: increasing \(\kappa\) enlarges the interaction range and makes PEKI closer to a global EKI update, whereas decreasing \(\kappa\) reduces interaction between distant particles. The ensemble size \(J\), number of iterations \(N\), and metric \(M_n\) are treated as algorithmic or numerical settings rather than additional polarization hyperparameters.

The particle-dependent weighting in PEKI is different from conventional localization in ensemble Kalman methods. Conventional localization acts on correlations between parameter or state components, whereas PEKI weights the contributions of different ensemble members according to their distances in parameter space. Moreover, the distances used in conventional localization are typically fixed by the problem geometry, while the particle-to-particle distances in PEKI evolve with the ensemble. 

\subsection{Properties of PEKI}

The analytical results are as follows. First, PEKI approaches global EKI as \(\kappa\to\infty\), whereas its \(\kappa\to0\) limit is an update based only on the target particle and its nearest neighbors. Second, interactions between particle clusters are progressively suppressed as their separation increases relative to the polarization radius \(\kappa\), leading to increasingly decoupled PEKI updates. Third, for a linear forward map, each PEKI particle retains the standard EKI residual non-expansion mechanism. Finally, the cluster-mean update admits an exact decomposition into an ordinary cluster-EKI update, a cross-cluster correction, a within-cluster correction, and an observation-perturbation term. A concise proof of the cross-cluster suppression result is provided in Appendix \ref{app:cluster_decoupling_bounds}, while its full proof, together with proofs for the other results, is given in the Supplement.

\subsubsection{Limiting regimes}
\label{subsec:algebraic_structure_limiting_regimes}

We first characterize the two limiting regimes of the polarization radius $\kappa$.


\begin{proposition}[Global EKI limit]\label{prop:global_eki_limit}
Assume \(J\geq2\) and fix the ensemble at iteration \(n\). As \(\kappa\to\infty\),
\[
\begin{gathered}
    w_{ij,n}\longrightarrow\frac1J,
    \qquad D_{i,n}=1-\sum_{j=1}^Jw_{ij,n}^2\longrightarrow\frac{J-1}{J}, \\
    \bar\theta_{i,n}\longrightarrow\bar\theta_n :=\frac{1}{J}\sum_{j=1}^J\theta_n^{(j)}, \qquad \bar g_{i,n}\longrightarrow\bar g_n:=\frac{1}{J}\sum_{j=1}^Jg_n^{(j)},
\end{gathered}
\]
Therefore, the particle-dependent covariance matrices become independent of the target particle and converge to the following standard global empirical covariances: 
\[\begin{aligned}
    C_{i,n}^{\theta g}
\longrightarrow C_n^{\theta g,\mathrm{glob}}
    &:=\frac1{J-1}\sum_{j=1}^J
    (\theta_n^{(j)}-\bar\theta_n)(g_n^{(j)}-\bar g_n)^\top,
    \\
   C_{i,n}^{g g}
\longrightarrow C_n^{gg,\mathrm{glob}}
    &:=\frac1{J-1}\sum_{j=1}^J
    (g_n^{(j)}-\bar g_n)(g_n^{(j)}-\bar g_n)^\top.
\end{aligned}\]
Consequently, the PEKI update converges to the standard stochastic EKI update formed from these global covariance matrices.
 
\end{proposition}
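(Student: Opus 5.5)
The argument is a chain of continuity statements: fix the ensemble $\{\theta_n^{(j)}\}_{j=1}^J$, the forward evaluations $g_n^{(j)}$, and the perturbed data $y_n^{(i)}$, and treat everything as a function of $\kappa$ alone.

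\emph{Weights.} Since $M_n\succ0$ is fixed, each $d_{ij,n}^2$ is a finite nonnegative number. Hence $\widetilde w_{ij,n}=\exp(-d_{ij,n}^2/(2\kappa^2))\to1$ as $\kappa\to\infty$. The denominator $\sum_\ell\widetilde w_{i\ell,n}$ tends to $J$, and it is always at least $\widetilde w_{ii,n}=1$, so the normalization is well defined for every $\kappa$. Therefore $w_{ij,n}\to1/J$.

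\emph{Local statistics.} Finite sums and products of convergent sequences converge, so:
\begin{itemize}
\item $D_{i,n}\to1-J\cdot J^{-2}=(J-1)/J$;
\item $\bar\theta_{i,n}\to\bar\theta_n$ and $\bar g_{i,n}\to\bar g_n$, since the weighted averages are linear in the $w_{ij,n}$.
\end{itemize}
Because $J\ge2$, the limit $(J-1)/J$ is strictly positive. So for all sufficiently large $\kappa$ we have $D_{i,n}>0$, and $C_{i,n}^{\theta g}$, $C_{i,n}^{gg}$ are well defined; this is the only place the hypothesis $J\ge2$ enters. Each unnormalized sum $\sum_j w_{ij,n}(\theta_n^{(j)}-\bar\theta_{i,n})(g_n^{(j)}-\bar g_{i,n})^\top$ is a polynomial in the weights and local means. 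It therefore converges to $\frac1J\sum_j(\theta_n^{(j)}-\bar\theta_n)(g_n^{(j)}-\bar g_n)^\top$. Dividing by $D_{i,n}\to(J-1)/J$ gives $C_n^{\theta g,\mathrm{glob}}$ with the $1/(J-1)$ normalization, and similarly $C_{i,n}^{gg}\to C_n^{gg,\mathrm{glob}}$. None of these limits depends on $i$.

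\emph{Gain and update.} The step that needs care is passing the limit through the matrix inverse in $K_{i,n}$. Since $C_{i,n}^{gg}\succeq0$ is a weighted sum of outer products with nonnegative weights, and $R_N\succ0$ by standing assumption, we have $C_{i,n}^{gg}+R_N\succeq R_N$. Its smallest eigenvalue is thus bounded below by $\lambda_{\min}(R_N)>0$, uniformly in $\kappa$, and the same holds for the limit $C_n^{gg,\mathrm{glob}}+R_N$. Matrix inversion is continuous on the open set of invertible matrices, so $(C_{i,n}^{gg}+R_N)^{-1}\to(C_n^{gg,\mathrm{glob}}+R_N)^{-1}$. Alternatively, one can quote the identity $A^{-1}-B^{-1}=A^{-1}(B-A)B^{-1}$ with $\|A^{-1}\|,\|B^{-1}\|\le\lambda_{\min}(R_N)^{-1}$.

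It follows that $K_{i,n}\to K_n^{\mathrm{glob}}:=C_n^{\theta g,\mathrm{glob}}(C_n^{gg,\mathrm{glob}}+R_N)^{-1}$. The update \eqref{eq: peki update} is affine in $K_{i,n}$ with the fixed vector $y_n^{(i)}-g_n^{(i)}$, so $\theta_{n+1}^{(i)}$ converges to $\theta_n^{(i)}+K_n^{\mathrm{glob}}(y_n^{(i)}-g_n^{(i)})$. This is the standard stochastic EKI update, or the deterministic one when $\xi_n^{(i)}=0$. The convergence holds pathwise for each realization of the perturbations, since $\kappa$ does not affect them.
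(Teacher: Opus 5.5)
Your proof is correct and is essentially the same argument as the paper's. You pass $w_{ij,n}\to 1/J$ through the finite-sample-corrected weighted moments, then through the gain, using $C_{i,n}^{gg}+R_N\succeq R_N\succ0$ to justify continuity of the inverse. One minor sharpening: with the Gaussian kernel and $J\ge2$, every weight lies strictly in $(0,1)$, so $\sum_j w_{ij,n}^2<1$ and $D_{i,n}>0$ for every finite $\kappa$, not only for sufficiently large $\kappa$.
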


\begin{proposition}[Small-\(\kappa\) nearest-neighbor limit]
\label{prop:small_kappa_nearest_neighbor}
Fix particle \(i\) at iteration \(n\), and assume that no other particle coincides with particle $i$, i.e., $d_{ij,n}>0,\,j\neq i$. Define the distance from particle $i$ to its nearest distinct particle(s), the corresponding nearest-neighbor index set, and the number of such nearest neighbors by
\[
    r_{i,n}:=\min_{j\neq i}d_{ij,n},
    \qquad
    \mathcal{N}_{i,n}:=
    \{j\neq i:d_{ij,n}=r_{i,n}\},
    \qquad
    m_i:=|\mathcal{N}_{i,n}|.
\]
Then, as \(\kappa\to0\),
\[\begin{aligned}
    C_{i,n}^{\theta g}
    &\longrightarrow
    \frac1{2m_i}\sum_{j\in\mathcal{N}_{i,n}}
    (\theta_n^{(i)}-\theta_n^{(j)})
    (g_n^{(i)}-g_n^{(j)})^\top,
    \\
    C_{i,n}^{gg}
    &\longrightarrow
    \frac1{2m_i}\sum_{j\in\mathcal{N}_{i,n}}
    (g_n^{(i)}-g_n^{(j)})
    (g_n^{(i)}-g_n^{(j)})^\top.
\end{aligned}\]
Thus, in the small-$\kappa$ limit, the particle-dependent covariance information is determined entirely by particle $i$ and its nearest neighbor(s). If the nearest neighbor is unique, the limiting covariance is exactly the covariance obtained from the two-particle pair consisting of particle $i$ and that neighbor. If several particles are tied at the same nearest-neighbor distance, the limit is the average of the corresponding pairwise secant covariances.

Consequently, the PEKI update of particle i is driven entirely by its nearest neighbor(s), while all more distant particles become asymptotically irrelevant. In the unique-nearest-neighbor case, the limiting update is exactly the update that particle i would receive from an EKI applied to the two-particle ensemble consisting of itself and its nearest neighbor.

Intuitively, as $\kappa \to 0$, each particle assigns almost all of its weight to itself, so the uncorrected weighted covariance would vanish. However, the covariance denominator $D_{
i,n}$ vanishes at the same order, and the corrected covariance therefore has a finite nearest-neighbor limit.

\end{proposition}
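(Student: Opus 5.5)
Since $d_{ii,n}=0$, we have $\widetilde w_{ii,n}=1$. Write $\varepsilon:=\exp(-r_{i,n}^2/(2\kappa^2))\to0$ as $\kappa\to0$. For $j\in\mathcal{N}_{i,n}$ we have $\widetilde w_{ij,n}=\varepsilon$. For the remaining $j\neq i$ we have $d_{ij,n}>r_{i,n}$, so $\widetilde w_{ij,n}=\varepsilon\cdot\exp(-(d_{ij,n}^2-r_{i,n}^2)/(2\kappa^2))=o(\varepsilon)$. Normalizing by $S=1+m_i\varepsilon+o(\varepsilon)$ gives
\[
w_{ii,n}=1-m_i\varepsilon+o(\varepsilon),\qquad w_{ij,n}=\varepsilon+o(\varepsilon)\ (j\in\mathcal{N}_{i,n}),\qquad w_{ij,n}=o(\varepsilon)\ \text{otherwise}.
\]
The first step is to expand the denominator. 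Using $\sum_j w_{ij,n}^2=(1-m_i\varepsilon)^2+O(\varepsilon^2)+o(\varepsilon)$, we get $D_{i,n}=2m_i\varepsilon+o(\varepsilon)$. This is the source of the factor $1/(2m_i)$, and it shows $D_{i,n}>0$ for small $\kappa$, so the covariances are well defined.

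The second step is to rewrite the numerator so that the $\varepsilon$-orders are visible. For any weights summing to one, the weighted covariance has the pairwise form
\[
\sum_j w_{ij,n}(\theta^{(j)}-\bar\theta_{i,n})(g^{(j)}-\bar g_{i,n})^\top=\tfrac12\sum_{j,k}w_{ij,n}w_{ik,n}(\theta^{(j)}-\theta^{(k)})(g^{(j)}-g^{(k)})^\top .
\]
Only pairs with $j\neq k$ contribute. Pairs with $j=i$ or $k=i$ carry the factor $w_{ii,n}w_{ik,n}$, which equals $\varepsilon+o(\varepsilon)$ when $k\in\mathcal{N}_{i,n}$ and is $o(\varepsilon)$ otherwise. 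Pairs with both $j,k\neq i$ are $O(\varepsilon^2)$. The two orderings $(i,k)$ and $(k,i)$ cancel the $\tfrac12$, so the numerator equals
\[
\varepsilon\sum_{j\in\mathcal{N}_{i,n}}(\theta^{(i)}-\theta^{(j)})(g^{(i)}-g^{(j)})^\top+o(\varepsilon).
\]
The differences are fixed vectors, so the $o(\cdot)$ terms stay $o(\varepsilon)$ in matrix norm.

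Dividing the numerator by $D_{i,n}=2m_i\varepsilon(1+o(1))$ gives the stated limit for $C^{\theta g}_{i,n}$, and the same computation with $g$ in place of $\theta$ gives the limit for $C^{gg}_{i,n}$. When $m_i=1$ with neighbor $j$, the limit is $\tfrac12(\theta^{(i)}-\theta^{(j)})(g^{(i)}-g^{(j)})^\top$. This equals the unbiased $1/(J-1)$ covariance of the two-particle ensemble $\{\theta^{(i)},\theta^{(j)}\}$, so the limiting update is the two-particle EKI update. For ties, the limit is the average of the pairwise secant covariances.

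For the update statement, $C^{gg}_{i,n}+R_N\succeq R_N\succ0$. Matrix inversion is continuous on this set, so $K_{i,n}$ converges to the gain built from the limiting covariances, and the update converges as well for fixed $y_n^{(i)}$.

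The main obstacle is the bookkeeping of orders. One must check that the non-nearest neighbors are $o(\varepsilon)$ rather than merely small, which follows from the strict inequality $d_{ij,n}>r_{i,n}$ together with the finite number of particles. One must also check that the cross terms between distinct neighbors are $O(\varepsilon^2)$. Because $D_{i,n}$ is itself of order $\varepsilon$, every error term has to be tracked relative to $\varepsilon$ rather than in absolute size; the pairwise covariance identity makes this tracking transparent.
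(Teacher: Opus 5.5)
Your proof is correct and follows essentially the paper's route. The key tool is the pairwise form of the weighted covariance (Lemma~\ref{lem:pairwise_weighted_covariance}). Under it, both the numerator and $D_{i,n}=2\sum_{p<q}w_{ip,n}w_{iq,n}$ are dominated at order $\varepsilon$ by the pairs $(i,j)$ with $j\in\mathcal{N}_{i,n}$, which gives the factor $1/(2m_i)$, and continuity of $(C^{gg}+R_N)^{-1}$ then carries the limit to the gain and the update. A small simplification: that ratio is invariant under rescaling the weights, so you can use the raw weights $\widetilde w_{ij,n}$ directly and divide numerator and denominator by $\varepsilon$, avoiding the expansion $w_{ii,n}=1-m_i\varepsilon+o(\varepsilon)$.
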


\begin{remark}[Small-\(\kappa\) update behavior]
\label{rem:small_kappa_numerical_implications}
As \(\kappa\to0\), each particle assigns almost all of its weight to itself: \(w_{ii,n}\to1\) and \(w_{ij,n}\to0\) for \(j\neq i\). Without the finite-sample correction by $D_{i,n}$, the corresponding weighted covariance would therefore vanish. However, the covariance denominator $D_{i,n}$ vanishes at the same order, so the corrected covariance has the finite nearest-neighbor limit established above. Consequently, PEKI does not generally stop updating as \(\kappa\to0\).

Strong polarization therefore does not eliminate the update, but restricts it to information from the nearest neighbor(s). If these neighbors are close to the target particle and have similar forward-model outputs, the resulting covariance and Kalman gain can be small, leading to only a small particle displacement. Thus, the update may appear nearly stationary even though its small-$\kappa$ limit is generally nonzero.
\end{remark}

\begin{remark}[PEKI subspace property]\label{prop:local_subspace_property}
For every particle, the PEKI update satisfies
\[
    \theta_{n+1}^{(i)}-\theta_n^{(i)}
    \in
    \operatorname{span}\left\{
    \theta_n^{(j)}-\bar\theta_{i,n}:w_{ij,n}>0
    \right\}.
\]
For the Gaussian kernel, all weights are positive, so this is algebraically the
full ensemble span. Nevertheless, the contribution of a particle to the covariance and update decreases exponentially with its distance from the target particle (quantified in Section~\ref{subsec:cluster_decoupling}). Hence, in practical computation, the update is primarily governed by directions associated with nearby particles. In this sense, the subspace is effectively local even though it is algebraically global. With a compactly supported or thresholded kernel, distant particles have exactly zero weight, and the same relation becomes a literal local-subspace property.
\end{remark}

\subsubsection{Suppression of cross-cluster interactions}
\label{subsec:cluster_decoupling}

This subsection shows that cross-cluster interactions are progressively suppressed as the separation between particle clusters increases relative to $\kappa$. Specifically, the exponential suppression of cross-cluster weights propagates to the local covariance matrices and Kalman gains, so that the PEKI update of a particle approaches the update it would receive if all particles outside its cluster were removed.

To compare the full PEKI update with the update obtained using only particles
within a single cluster, we first introduce the corresponding
cluster-restricted PEKI quantities. Let \(I_1,\ldots,I_M\) partition \(\{1,\ldots,J\}\). Fix \(i\in I_r\), and let \(J_r:=|I_r|\) denote the number of particles in cluster \(I_r\), with \(2\le J_r<J\). Define the cluster-restricted weights
\[
    w_{ij,n}^{(r)}
    :=
    \frac{\widetilde w_{ij,n}}
    {\sum_{\ell\in I_r}\widetilde w_{i\ell,n}},
    \qquad j\in I_r.
\]
The cluster-restricted means and covariances \(\bar\theta_{i,n}^{(r)}\), \(\bar g_{i,n}^{(r)}\), \(C_{i,r,n}^{\theta g}\), and \(C_{i,r,n}^{gg}\) are defined by the same formulas as their full-ensemble counterparts, with sums restricted to \(I_r\) and weights replaced by \(w_{ij,n}^{(r)}\). For later use, we explicitly write the cluster-restricted covariance denominator and Kalman gain as
\[
    D_{i,r,n}
    :=
    1-\sum_{j\in I_r}\bigl(w_{ij,n}^{(r)}\bigr)^2,
    \qquad
    K_{i,r,n}
    :=
    C_{i,r,n}^{\theta g}
    \bigl(C_{i,r,n}^{gg}+R_N\bigr)^{-1}.
\]
The update using only particles in \(I_r\) is
\[
    \widetilde\theta_{n+1}^{(i,r)}
    :=
    \theta_n^{(i)}
    +K_{i,r,n}
    \bigl(y_n^{(i)}-g_n^{(i)}\bigr).
\]

To quantify the discrepancy between the full and cluster-restricted PEKI
quantities, define the \emph{cross-cluster leakage} as the total normalized weight assigned by particle \(i\) to particles outside its cluster, and the nearest outside-cluster distance
\[
    \varepsilon_{i,r,n}
    :=
    \sum_{j\notin I_r}w_{ij,n},
    \qquad
    d_{i,r,n}^{\mathrm{out}}
    :=
    \min_{j\notin I_r}d_{ij,n}.
\]

\begin{proposition}[Exponential suppression of cross-cluster weights]
\label{prop:cross_cluster_weight_suppression}
The cross-cluster leakage satisfies
\[
    \varepsilon_{i,r,n}
    \le
    \frac{(J-J_r)}{(J-J_r)+\exp\left[
    (d_{i,r,n}^{\mathrm{out}})^2/2\kappa^2
    \right]}\le
    (J-J_r)
    \exp\left(
    -\frac{(d_{i,r,n}^{\mathrm{out}})^2}{2\kappa^2}
    \right).
\]
\end{proposition}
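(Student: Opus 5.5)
The plan is to write the leakage directly in terms of the unnormalized Gaussian weights and then bound numerator and denominator separately. By the definition of \(w_{ij,n}\),
\[
    \varepsilon_{i,r,n}
    =\frac{S_{\mathrm{out}}}{S_{\mathrm{in}}+S_{\mathrm{out}}},
    \qquad
    S_{\mathrm{in}}:=\sum_{j\in I_r}\widetilde w_{ij,n},
    \quad
    S_{\mathrm{out}}:=\sum_{j\notin I_r}\widetilde w_{ij,n}.
\]
The key observation is that \(x\mapsto x/(a+x)\) is nondecreasing in \(x\ge0\) and nonincreasing in \(a>0\). An upper bound on \(\varepsilon_{i,r,n}\) therefore follows from an upper bound on \(S_{\mathrm{out}}\) together with a lower bound on \(S_{\mathrm{in}}\).

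\textbf{Lower bound on \(S_{\mathrm{in}}\).} Since \(i\in I_r\) and \(d_{ii,n}=0\), we have \(\widetilde w_{ii,n}=1\). All other Gaussian weights are positive, so \(S_{\mathrm{in}}\ge1\).

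\textbf{Upper bound on \(S_{\mathrm{out}}\).} Every \(j\notin I_r\) satisfies \(d_{ij,n}\ge d_{i,r,n}^{\mathrm{out}}\) by definition of the nearest outside-cluster distance. Hence
\[
    \widetilde w_{ij,n}\le e^{-(d^{\mathrm{out}}_{i,r,n})^2/2\kappa^2}=:q,
\]
and there are \(J-J_r\) such indices, so \(S_{\mathrm{out}}\le (J-J_r)q\).

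\textbf{First inequality.} Substituting both bounds into the monotone expression gives
\[
    \varepsilon_{i,r,n}\le\frac{(J-J_r)q}{1+(J-J_r)q}.
\]
Dividing numerator and denominator by \(q>0\) yields exactly
\[
    \frac{J-J_r}{(J-J_r)+\exp\bigl[(d^{\mathrm{out}}_{i,r,n})^2/2\kappa^2\bigr]}.
\]

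\textbf{Second inequality.} Drop the nonnegative term \(J-J_r\) from the denominator of the first bound. This leaves \((J-J_r)\exp\bigl(-(d^{\mathrm{out}}_{i,r,n})^2/2\kappa^2\bigr)\).

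There is no real obstacle here. The only point needing care is the lower bound \(S_{\mathrm{in}}\ge1\), which relies on the target particle belonging to its own cluster. The other in-cluster weights could also be kept, which would sharpen the bound, but they are simply discarded to obtain the stated form.
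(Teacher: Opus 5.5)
Your proof is correct and follows the paper's argument essentially step for step: write $\varepsilon_{i,r,n}=O/(I+O)$ with the raw weights, use $\widetilde w_{ii,n}=1$ to get $I\ge 1$ and the nearest outside-cluster distance to get $O\le (J-J_r)q$, then rearrange and drop $J-J_r$ from the denominator. Your explicit remark that $x\mapsto x/(a+x)$ is monotone in both arguments is left implicit in the paper.
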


\begin{proposition}[Covariance and Kalman-gain decoupling]
\label{prop:covariance_gain_decoupling}
Let
\[
    R_{\theta,n}
    :=
    \max_{p,q}\|\theta_n^{(p)}-\theta_n^{(q)}\|,
    \qquad
    R_{g,n}
    :=
    \max_{p,q}\|g_n^{(p)}-g_n^{(q)}\|
\]
denote the ensemble diameters in parameter and observation spaces, respectively,
and let
\[
    \mu_N:=\lambda_{\min}(R_N)>0,
    \qquad
    B_n
    :=
    \frac{2R_{\theta,n}R_{g,n}}{\mu_N}
    +
    \frac{R_{\theta,n}R_{g,n}^3}{\mu_N^2}.
\]
be the smallest eigenvalue of \(R_N\) and the shorthand bound constant.

The full and cluster-restricted covariances satisfy
\begin{align}
    \left\|
        C_{i,n}^{\theta g}-C_{i,r,n}^{\theta g}
    \right\|
    &\le
    \frac{2R_{\theta,n}R_{g,n}}
    {(1-\varepsilon_{i,r,n})^2D_{i,r,n}}
    \varepsilon_{i,r,n}, \label{eq:c_theta_g_bound}
    \\
    \left\|
        C_{i,n}^{gg}-C_{i,r,n}^{gg}
    \right\|
    &\le
    \frac{2R_{g,n}^2}
    {(1-\varepsilon_{i,r,n})^2D_{i,r,n}}
    \varepsilon_{i,r,n},\label{eq:c_g_g_bound}
\end{align}
and the corresponding Kalman gains satisfy
\begin{equation}
    \left\|
        K_{i,n}-K_{i,r,n}
    \right\|
    \le
    B_n
    \frac{\varepsilon_{i,r,n}}
    {(1-\varepsilon_{i,r,n})^2D_{i,r,n}}.
    \label{eq:covariance_gain_decoupling}
\end{equation}
Consequently, provided that \(D_{i,r,n}\) remains nondegenerate, these
discrepancies are exponentially suppressed as
\(d_{i,r,n}^{\mathrm{out}}/\kappa\) increases.
\end{proposition}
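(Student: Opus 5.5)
The plan is to express the full weights as a mixture of the cluster-restricted weights and the outside-cluster weights, and to push this decomposition through the weighted means, the unnormalized covariances, the denominator, and finally the gain.

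\textbf{Step 1: weight decomposition.} Write \(\varepsilon:=\varepsilon_{i,r,n}\). For \(j\in I_r\) we have \(w_{ij,n}=(1-\varepsilon)w^{(r)}_{ij,n}\). Collect the outside weights into the outside measure \(\nu_j:=w_{ij,n}\) for \(j\notin I_r\), whose total mass is \(\varepsilon\).

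\textbf{Step 2: means.} This gives
\[
\bar\theta_{i,n}-\bar\theta^{(r)}_{i,n}=\sum_{j\notin I_r}w_{ij,n}\bigl(\theta_n^{(j)}-\bar\theta^{(r)}_{i,n}\bigr),
\]
so \(\|\bar\theta_{i,n}-\bar\theta^{(r)}_{i,n}\|\le \varepsilon R_{\theta,n}\). The same argument gives \(\|\bar g_{i,n}-\bar g^{(r)}_{i,n}\|\le \varepsilon R_{g,n}\). Here we use that every convex combination of particles lies within distance \(R\) of every particle, because the Euclidean ball of radius \(R\) around a particle is convex.

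\textbf{Step 3: unnormalized covariances.} Let
\[
S:=\sum_j w_{ij,n}\bigl(\theta^{(j)}-\bar\theta\bigr)\bigl(g^{(j)}-\bar g\bigr)^\top,
\]
and let \(S^{(r)}\) be its restricted analogue. The cleanest route uses the centering identity: with any fixed reference point,
\[
S=\sum_j w_j\,\theta^{(j)}g^{(j)\top}-\bar\theta\,\bar g^\top .
\]
From this,
\[
S-(1-\varepsilon)S^{(r)}=\sum_{j\notin I_r}w_j\bigl(\theta^{(j)}-\bar\theta\bigr)\bigl(g^{(j)}-\bar g\bigr)^\top+(1-\varepsilon)\bigl(\bar\theta^{(r)}-\bar\theta\bigr)\bigl(\bar g^{(r)}-\bar g\bigr)^\top .
\]
Both terms are \(O(\varepsilon R_\theta R_g)\) by Step 2, so \(\|S-S^{(r)}\|\lesssim 2\varepsilon R_\theta R_g\).

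\textbf{Step 4: denominators.} We have
\[
D_{i,n}=1-(1-\varepsilon)^2\sum_{I_r}(w^{(r)})^2-\sum_{\notin I_r}w^2 .
\]
Since \(\sum_{I_r}(w^{(r)})^2=1-D_{i,r,n}\), this gives \(D_{i,n}\ge (1-\varepsilon)^2D_{i,r,n}\). This lower bound is the source of the factor \((1-\varepsilon)^2D_{i,r,n}\). Also \(|D_{i,n}-D_{i,r,n}|=O(\varepsilon)\).

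Then write
\[
C-C^{(r)}=\frac{S}{D}-\frac{S^{(r)}}{D_r}=\frac{S-S^{(r)}}{D}+S^{(r)}\Bigl(\frac1D-\frac1{D_r}\Bigr),
\]
using \(\|S^{(r)}\|\le \tfrac12 R_\theta R_g\)-type bounds. The \(gg\) bound follows identically with \(\theta\) replaced by \(g\).

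\textbf{Main obstacle.} The hard part is getting exactly the constant \(2\) together with the denominator \((1-\varepsilon)^2D_{i,r,n}\), rather than some looser constant. If the direct bookkeeping leaves slack, I would instead rewrite the covariances in pairwise form,
\[
C=\frac{1}{2D}\sum_{j,k}w_jw_k\bigl(\theta^{(j)}-\theta^{(k)}\bigr)\bigl(g^{(j)}-g^{(k)}\bigr)^\top .
\]
Since \(D=\sum_{j\neq k}w_jw_k\), this expresses \(C\) as a weighted average of pair secants, each of norm at most \(R_\theta R_g\). The pairs involving an outside particle have total \(w_jw_k\)-mass at most \(2\varepsilon\). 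The pairs inside \(I_r\) have mass \((1-\varepsilon)^2(1-\sum(w^{(r)})^2)\), which reproduces the restricted average exactly. Comparing the two averages of bounded secants then gives a difference of at most
\[
\frac{2\varepsilon R_\theta R_g}{(1-\varepsilon)^2D_r}
\]
directly. I expect this pairwise route to be the cleanest.

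\textbf{Step 5: gain.} With \(A=C^{gg}+R_N\) and \(A_r=C^{gg}_r+R_N\), both satisfy \(\succeq R_N\), so their inverses have norm at most \(1/\mu_N\). Then
\[
K-K_r=(C^{\theta g}-C^{\theta g}_r)A^{-1}+C^{\theta g}_r\bigl(A^{-1}-A_r^{-1}\bigr),
\]
with
\[
A^{-1}-A_r^{-1}=A^{-1}\bigl(C^{gg}_r-C^{gg}\bigr)A_r^{-1}.
\]
Using \(\|C^{\theta g}_r\|\le R_\theta R_g/2\) from the pairwise form gives
\[
\|K-K_r\|\le\Bigl(\frac{2R_\theta R_g}{\mu_N}+\frac{R_\theta R_g}{2}\cdot\frac{2R_g^2}{\mu_N^2}\Bigr)\frac{\varepsilon}{(1-\varepsilon)^2D_r},
\]
which is exactly \(B_n\) times the stated factor. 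The closing claim then follows by inserting the bound on \(\varepsilon\) from Proposition \ref{prop:cross_cluster_weight_suppression}, noting that \(\varepsilon\to0\) keeps \((1-\varepsilon)^{-2}\) bounded.
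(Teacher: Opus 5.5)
Your proposal is correct, and the route you settle on is essentially the paper's proof: the pairwise secant form, with in-cluster pair mass $(1-\varepsilon_{i,r,n})^2D_{i,r,n}$ and outside pair mass at most $2\varepsilon_{i,r,n}$, followed by the resolvent identity for $(C^{gg}+R_N)^{-1}$ together with $\|C_{i,r,n}^{\theta g}\|\le R_{\theta,n}R_{g,n}/2$. The centering-identity detour in Steps 2--4 is unnecessary, and as you suspected it does not give the constant $2$ with the bounds you indicate, so you can go straight to the pairwise comparison.
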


\begin{corollary}[Exponentially small one-step cluster-update discrepancy]
\label{cor:cluster_update_discrepancy}
The full and cluster-restricted updates satisfy a non-asymptotic bound:
\[
\begin{aligned}
\left\|
\theta_{n+1}^{(i)}
-\widetilde\theta_{n+1}^{(i,r)}
\right\|
&\le
B_n
\frac{\varepsilon_{i,r,n}}
{(1-\varepsilon_{i,r,n})^2D_{i,r,n}}
\left\|y_n^{(i)}-g_n^{(i)}\right\|\\
&\le
\frac{B_n}{D_{i,r,n}}
(J-J_r)
\exp\left(
-\frac{(d_{i,r,n}^{\mathrm{out}})^2}{2\kappa^2}
\right)\\
&\qquad\times
\left[
1+
(J-J_r)
\exp\left(
-\frac{(d_{i,r,n}^{\mathrm{out}})^2}{2\kappa^2}
\right)
\right]
\left\|y_n^{(i)}-g_n^{(i)}\right\|.
\end{aligned}
\]

\end{corollary}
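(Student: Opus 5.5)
The first inequality follows directly from the two update formulas. Both updates start from the same $\theta_n^{(i)}$ and use the same perturbed innovation $y_n^{(i)}-g_n^{(i)}$. Subtracting them gives
\[
\theta_{n+1}^{(i)}-\widetilde\theta_{n+1}^{(i,r)}=\bigl(K_{i,n}-K_{i,r,n}\bigr)\bigl(y_n^{(i)}-g_n^{(i)}\bigr).
\]
Submultiplicativity of the operator norm then reduces the first line to the gain bound \eqref{eq:covariance_gain_decoupling} of Proposition~\ref{prop:covariance_gain_decoupling}. No further estimate is needed, since the same realization $\xi_n^{(i)}$ enters both updates and cancels from the comparison.

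For the second inequality, the task is to bound the scalar factor $\varepsilon/(1-\varepsilon)^2$, with $\varepsilon=\varepsilon_{i,r,n}$, in terms of $E:=(J-J_r)\exp\bigl(-(d_{i,r,n}^{\mathrm{out}})^2/2\kappa^2\bigr)$. I would use the sharper first form of Proposition~\ref{prop:cross_cluster_weight_suppression}, $\varepsilon\le E/(1+E)$. Two consequences follow:
\begin{itemize}
\item $1-\varepsilon\ge 1/(1+E)$, so $(1-\varepsilon)^{-2}\le(1+E)^2$;
\item $\varepsilon\le E/(1+E)$.
\end{itemize}
Multiplying these gives
\[
\frac{\varepsilon}{(1-\varepsilon)^2}\le \frac{E}{1+E}(1+E)^2=E(1+E).
\]
This is exactly the product $E\,[1+E]$ in the statement. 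Inserting it, together with the factor $B_n/D_{i,r,n}$, yields the second line. The function $x\mapsto x/(1-x)^2$ is increasing on $[0,1)$, so replacing $\varepsilon$ by its upper bound is legitimate. Note that the weaker bound $\varepsilon\le E$ alone would not suffice, since it gives $(1-\varepsilon)^{-2}\le(1-E)^{-2}$, which blows up as $E\to1$. This is why the first form of Proposition~\ref{prop:cross_cluster_weight_suppression} is needed.

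The only step requiring care is this scalar manipulation, and in particular the monotonicity check. A second point is that the quantity $D_{i,r,n}$ on the right-hand side must be positive for the bound to make sense. This holds whenever $J_r\ge2$ with positive Gaussian weights, because then the restricted weights are not concentrated on a single index. The covariance and gain perturbation analysis itself is taken from Proposition~\ref{prop:covariance_gain_decoupling} and is not repeated.
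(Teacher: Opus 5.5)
Your proposal is correct and follows essentially the same route as the paper. The first line comes from writing the difference as $(K_{i,n}-K_{i,r,n})(y_n^{(i)}-g_n^{(i)})$ and invoking the gain bound; for the second line the paper sets $t:=\varepsilon_{i,r,n}/(1-\varepsilon_{i,r,n})$, bounds $t\le (J-J_r)\exp\bigl(-(d_{i,r,n}^{\mathrm{out}})^2/2\kappa^2\bigr)$ directly from the raw weights, and uses the identity $\varepsilon/(1-\varepsilon)^2=t(1+t)$, which is equivalent to your argument from $\varepsilon\le E/(1+E)$.
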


Thus, provided that the within-cluster covariance denominator remains nondegenerate, the full PEKI update approaches the update obtained after removing all particles outside \(I_r\) as \(d_{i,r,n}^{\mathrm{out}}/\kappa\) increases. 

Together, these bounds provide a quantitative interpretation of the polarization radius \(\kappa\): for a fixed separation between particle clusters, smaller values of \(\kappa\) more strongly suppress cross-cluster interactions, whereas larger values permit stronger interaction between the clusters. The numerical results in the two examples further show that this exponential dependence closely captures the observed decay of cross-cluster interactions, suggesting that the predicted scale is empirically sharp up to problem-dependent prefactors.

\subsubsection{Residual contraction in the linear case}
\label{subsec:residual_contraction_local_convergence}

For a linear forward map, each PEKI particle retains the standard EKI residual non-expansion property, with the global covariance of standard EKI replaced by its particle-dependent local covariance. Consequently, the residual contraction has the same mathematical form as in standard EKI, although its contraction factor depends on the local covariance.

\begin{proposition}[Particle-wise residual non-expansion]
Assume \(\mathcal{G}(\theta)=A\theta\) and define the local parameter covariance
\[
    C_{i,n}^{\theta\theta}
    :=\frac1{D_{i,n}}\sum_{j=1}^Jw_{ij,n}
    (\theta_n^{(j)}-\bar\theta_{i,n})
    (\theta_n^{(j)}-\bar\theta_{i,n})^\top.
\]
Then
\[
    C_{i,n}^{\theta g}=C_{i,n}^{\theta\theta}A^\top,
    \qquad
    C_{i,n}^{gg}=AC_{i,n}^{\theta\theta}A^\top,
    \qquad
    K_{i,n}=C_{i,n}^{\theta\theta}A^\top
    \bigl(AC_{i,n}^{\theta\theta}A^\top+R_N\bigr)^{-1}.
\]
For a fixed observation vector \(\widetilde y^{(i)}\) (perturbed or unperturbed), let
\[
    \rho_{i,n}:=\widetilde y^{(i)}-A\theta_n^{(i)},
    \qquad
    S_{i,n}:=R_N^{-1/2}AC_{i,n}^{\theta\theta}A^\top R_N^{-1/2}\succeq0.
\]
be the residual of particle $i$ and the corresponding scaled local covariance in the observation space.

After one PEKI update, the residual satisfies
\[
    R_N^{-1/2}\rho_{i,n}^{+}
    =(I+S_{i,n})^{-1}R_N^{-1/2}\rho_{i,n},
    \label{eq:peki_residual_identity_from_eki}
\]
and hence
\begin{equation}
    \|\rho_{i,n}^{+}\|_{R_N^{-1}}
    \leq
    \frac{1}{1+\lambda_{\min}(S_{i,n})}
    \|\rho_{i,n}\|_{R_N^{-1}}
    \leq
    \|\rho_{i,n}\|_{R_N^{-1}},
    \label{eq:peki_residual_contraction_from_eki}
\end{equation}
where \(\|z\|_{R_N^{-1}}:=(z^\top R_N^{-1}z)^{1/2}\).
\end{proposition}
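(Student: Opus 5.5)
The proof is a direct algebraic reduction to the standard EKI residual identity. It has three steps: express the local covariances through $C_{i,n}^{\theta\theta}$, write the updated residual in terms of the gain, and rewrite the resulting matrix in whitened coordinates.

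\emph{Step 1: covariance identities.} With $g_n^{(j)}=A\theta_n^{(j)}$, linearity gives $\bar g_{i,n}=\sum_j w_{ij,n}A\theta_n^{(j)}=A\bar\theta_{i,n}$. Hence $g_n^{(j)}-\bar g_{i,n}=A(\theta_n^{(j)}-\bar\theta_{i,n})$. Substituting this into \eqref{eq:peki_cov_theta_g} and pulling $A^\top$ (respectively $A$ and $A^\top$) out of the weighted sum yields $C_{i,n}^{\theta g}=C_{i,n}^{\theta\theta}A^\top$ and $C_{i,n}^{gg}=AC_{i,n}^{\theta\theta}A^\top$. The stated form of $K_{i,n}$ then follows from its definition. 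Throughout we use $D_{i,n}>0$, which the covariances already require.

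\emph{Step 2: residual recursion.} Fix $\widetilde y^{(i)}$ and apply the update with this observation:
\[
\rho_{i,n}^{+}=\widetilde y^{(i)}-A\theta_{n+1}^{(i)}=\rho_{i,n}-AK_{i,n}\rho_{i,n}=(I-AK_{i,n})\rho_{i,n}.
\]
Write $P:=AC_{i,n}^{\theta\theta}A^\top\succeq0$. Then
\[
I-AK_{i,n}=I-P(P+R_N)^{-1}=R_N(P+R_N)^{-1}.
\]

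\emph{Step 3: whitened form and norm bound.} Conjugating by $R_N^{-1/2}$ gives
\[
R_N^{-1/2}R_N(P+R_N)^{-1}R_N^{1/2}=R_N^{1/2}(P+R_N)^{-1}R_N^{1/2}=\bigl(R_N^{-1/2}(P+R_N)R_N^{-1/2}\bigr)^{-1}=(I+S_{i,n})^{-1}.
\]
This is exactly the identity $R_N^{-1/2}\rho_{i,n}^{+}=(I+S_{i,n})^{-1}R_N^{-1/2}\rho_{i,n}$. Since $S_{i,n}$ is symmetric positive semidefinite, $(I+S_{i,n})^{-1}$ is symmetric with eigenvalues $1/(1+s)$ for eigenvalues $s\ge0$ of $S_{i,n}$. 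Its spectral norm is therefore $1/(1+\lambda_{\min}(S_{i,n}))\le1$. Taking Euclidean norms and using $\|R_N^{-1/2}z\|=\|z\|_{R_N^{-1}}$ gives \eqref{eq:peki_residual_contraction_from_eki}.

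No step is a serious obstacle. The only points needing care are the following:
\begin{itemize}
\item The identity $\bar g_{i,n}=A\bar\theta_{i,n}$ relies on the weights summing to one.
\item The inverse $(P+R_N)^{-1}$ exists because $P\succeq0$ and $R_N\succ0$.
\item The observation $\widetilde y^{(i)}$ must be held fixed across the step, so that the perturbation $\xi_n^{(i)}$ is absorbed into it.
\end{itemize}
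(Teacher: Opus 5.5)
Your proof is correct and follows essentially the same route as the paper's proof. Linearity gives $C_{i,n}^{\theta g}=C_{i,n}^{\theta\theta}A^\top$ and $C_{i,n}^{gg}=AC_{i,n}^{\theta\theta}A^\top$, so particle $i$ takes a standard EKI step with its local covariance, and conjugating $I-AK_{i,n}=R_N\bigl(AC_{i,n}^{\theta\theta}A^\top+R_N\bigr)^{-1}$ by $R_N^{-1/2}$ yields $(I+S_{i,n})^{-1}$ and the spectral-norm bound (note that $\bar g_{i,n}=A\bar\theta_{i,n}$ needs only linearity of $A$; the normalization $\sum_j w_{ij,n}=1$ would matter only for an affine forward map).
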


We next consider a different question: how much is this particle-wise residual contraction affected by particles outside the target particle's own cluster? To isolate this cross-cluster effect, we next compare the residual contraction of the full and cluster-restricted PEKI updates.

\begin{proposition}[Particle-wise residual contraction under cluster separation]
\label{prop:cluster_wise_residual_contraction}

Let \(C_{i,r,n}^{\theta\theta}\) be the cluster-restricted parameter covariance by restricting the preceding weighted covariance to \(I_r\), and let $S_{i,r,n}:=R_N^{-1/2}AC_{i,r,n}^{\theta\theta}A^\top R_N^{-1/2}$ denote the corresponding scaled covariance in the observation space.

Assume
\begin{equation}
    S_{i,r,n}\succeq s_0I
    \label{eq:cluster_restricted_strict_contraction}
\end{equation}
for some \(s_0>0\), and define
\[
    \eta_{i,r,n}
    :=\frac{2\|R_N^{-1/2}A\|_2^2R_{\theta,n}^2}
    {(1-\varepsilon_{i,r,n})^2D_{i,r,n}}
    \varepsilon_{i,r,n}.
\]
Then
\begin{equation}
    S_{i,n}\succeq(s_0-\eta_{i,r,n})I.
    \label{eq:S_lower_bound_leakage}
\end{equation}
If \(\eta_{i,r,n}<s_0\),
\begin{equation}
    \|\rho_{i,n}^{+}\|_{R_N^{-1}}
    \leq
    \frac{1}{1+s_0-\eta_{i,r,n}}
    \|\rho_{i,n}\|_{R_N^{-1}}.
    \label{eq:cluster_wise_residual_contraction_factor}
\end{equation}
Thus, cross-cluster leakage weakens the residual contraction by reducing the effective contraction margin from $s_0$ to $s_0-\eta_{i,r,n}$. As $\kappa$ decreases, cross-cluster leakage is increasingly suppressed, thereby reducing this source of degradation.
\end{proposition}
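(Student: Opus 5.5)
The plan is to reduce everything to a perturbation bound on the parameter covariance and then reuse the linear residual identity from the particle-wise non-expansion proposition. The bound \eqref{eq:S_lower_bound_leakage} should follow from an operator-norm estimate of $C_{i,n}^{\theta\theta}-C_{i,r,n}^{\theta\theta}$ combined with Weyl's inequality. The contraction factor \eqref{eq:cluster_wise_residual_contraction_factor} then follows from the identity $R_N^{-1/2}\rho_{i,n}^{+}=(I+S_{i,n})^{-1}R_N^{-1/2}\rho_{i,n}$.

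\textbf{Step 1: covariance perturbation.} I will first show
\[
\bigl\|C_{i,n}^{\theta\theta}-C_{i,r,n}^{\theta\theta}\bigr\|\le \frac{2R_{\theta,n}^2}{(1-\varepsilon_{i,r,n})^2D_{i,r,n}}\,\varepsilon_{i,r,n}.
\]
This is the bound \eqref{eq:c_theta_g_bound} of Proposition~\ref{prop:covariance_gain_decoupling}, with $g$ replaced by $\theta$ and hence $R_{g,n}$ replaced by $R_{\theta,n}$. The earlier argument never used any property of $g$ beyond the diameter bound, so it should transfer verbatim. For completeness I would recall its structure:
\begin{itemize}
\item Write the full weights as $w_{ij,n}=(1-\varepsilon_{i,r,n})w^{(r)}_{ij,n}$ for $j\in I_r$.
\item Express each covariance through the pairwise form $\frac{1}{2D}\sum_{j,k}w_jw_k(\theta_j-\theta_k)(\theta_j-\theta_k)^\top$. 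This form avoids the means, and each term has norm at most $R_{\theta,n}^2$.
\item Compare numerators and denominators separately. The denominators satisfy $D_{i,n}\ge(1-\varepsilon)^2 D_{i,r,n}$.
\end{itemize}
The step I expect to be most delicate is tracking exactly the constant $2/((1-\varepsilon)^2D_{i,r,n})$. I would simply cite the proof of Proposition~\ref{prop:covariance_gain_decoupling} with $g\mapsto\theta$.

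\textbf{Step 2: transfer to $S$.} By definition,
\[
S_{i,n}-S_{i,r,n}=R_N^{-1/2}A\bigl(C_{i,n}^{\theta\theta}-C_{i,r,n}^{\theta\theta}\bigr)A^\top R_N^{-1/2}.
\]
Its operator norm is therefore at most $\|R_N^{-1/2}A\|_2^2\,\|C_{i,n}^{\theta\theta}-C_{i,r,n}^{\theta\theta}\|\le\eta_{i,r,n}$. Both matrices are symmetric, so
\[
\lambda_{\min}(S_{i,n})\ge\lambda_{\min}(S_{i,r,n})-\eta_{i,r,n}\ge s_0-\eta_{i,r,n}.
\]
This gives \eqref{eq:S_lower_bound_leakage}.

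\textbf{Step 3: contraction.} The particle-wise non-expansion proposition gives $R_N^{-1/2}\rho^{+}_{i,n}=(I+S_{i,n})^{-1}R_N^{-1/2}\rho_{i,n}$. Since $S_{i,n}$ is symmetric positive semidefinite, $\|(I+S_{i,n})^{-1}\|_2=1/(1+\lambda_{\min}(S_{i,n}))$. When $\eta_{i,r,n}<s_0$, Step 2 bounds this by $1/(1+s_0-\eta_{i,r,n})$, which yields \eqref{eq:cluster_wise_residual_contraction_factor}.

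The closing remark about decreasing $\kappa$ then follows from Proposition~\ref{prop:cross_cluster_weight_suppression}. That proposition shows $\varepsilon_{i,r,n}\to0$ exponentially in $(d^{\mathrm{out}}_{i,r,n}/\kappa)^2$, and therefore $\eta_{i,r,n}\to0$ provided $D_{i,r,n}$ stays bounded away from zero.
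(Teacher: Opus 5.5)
Your proposal is correct and follows essentially the same route as the paper. You take the $\theta\theta$ analogue of the covariance-decoupling bound (with $R_{g,n}$ replaced by $R_{\theta,n}$), conjugate by $R_N^{-1/2}A$ to get $\|S_{i,n}-S_{i,r,n}\|\le\eta_{i,r,n}$ and hence the Loewner lower bound, and then apply the residual identity with $\|(I+S_{i,n})^{-1}\|_2=1/(1+\lambda_{\min}(S_{i,n}))$. Your caveat on the closing remark is appropriate: $\eta_{i,r,n}\to0$ as $\kappa$ decreases only if $D_{i,r,n}$ (and implicitly $s_0$, which also depends on $\kappa$) stays controlled.
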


\begin{remark}[Perturbed observations]
\label{rem:perturbed_observation_one_step_scope}
The one-step identities above remain valid when conditioning on a fixed realization of a perturbed observation. If a new perturbed observation is drawn at every iteration, however, the target changes between updates, so they do not by themselves imply deterministic multi-step contraction toward the fixed observation \(y\).
\end{remark}

\begin{remark}[Meaning of the strict-contraction assumption]
\label{rem:strict_contraction_rank_condition}
The condition \eqref{eq:cluster_restricted_strict_contraction} is a pointwise sufficient condition that the cluster-restricted covariance excites every observation direction. If it fails, for example because this covariance is rank deficient, \eqref{eq:peki_residual_contraction_from_eki} still gives non-expansion but not strict contraction in every direction. No uniform-in-time geometric rate is claimed.
\end{remark}

\subsubsection{Cluster-mean dynamics in the linear case}
\label{subsec:exact_peki_cluster_mean_dynamics}

The particle-wise results above do not directly describe cluster-mean dynamics. We therefore derive an exact decomposition of the cluster-mean update into a cluster-EKI update, a cross-cluster correction, a within-cluster correction, and an observation-perturbation term.

For a cluster \(I_r\) with \(J_r:=|I_r|\geq2\), define its empirical parameter mean and ordinary empirical parameter covariance by
\[
    m_{r,n}:=\frac1{J_r}\sum_{i\in I_r}\theta_n^{(i)},
    \qquad
    \widehat C_{r,n}^{\theta\theta}
    :=\frac1{J_r-1}\sum_{i\in I_r}
    (\theta_n^{(i)}-m_{r,n})(\theta_n^{(i)}-m_{r,n})^\top.
\]
The Kalman gain of standard EKI applied to cluster $I_r$, hereafter the cluster-EKI gain, is
\[
    K_{r,n}^{\mathrm{cl}}
    :=\widehat C_{r,n}^{\theta\theta}A^\top
    \bigl(A\widehat C_{r,n}^{\theta\theta}A^\top+R_N\bigr)^{-1}.
\]
For each \(i\in I_r\), recall the PEKI gain \(K_{i,n}\) and the cluster-restricted PEKI gain \(K_{i,r,n}\). Write the observation as \(y_n^{(i)}=y+\xi_n^{(i)}\), with \(\xi_n^{(i)}=0\) in the deterministic formulation.

\begin{proposition}[Exact decomposition of the PEKI cluster-mean update]
\label{prop:exact_peki_cluster_mean_decomposition}
For \(\mathcal{G}(\theta)=A\theta\), the cluster mean satisfies
\begin{equation}
\begin{aligned}
    m_{r,n+1}
    &=m_{r,n}
    +K_{r,n}^{\mathrm{cl}}(y-Am_{r,n})
    +\delta_{r,n}^{\mathrm{out}}
    +\delta_{r,n}^{\mathrm{in}}
    +\nu_{r,n},
\end{aligned}
\label{eq:exact_peki_cluster_mean_decomposition}
\end{equation}
where
\[\begin{aligned}
    \delta_{r,n}^{\mathrm{out}}
    &:=\frac1{J_r}\sum_{i\in I_r}
    (K_{i,n}-K_{i,r,n})(y-A\theta_n^{(i)}),
    \\
    \delta_{r,n}^{\mathrm{in}}
    &:=\frac1{J_r}\sum_{i\in I_r}
    (K_{i,r,n}-K_{r,n}^{\mathrm{cl}})(y-A\theta_n^{(i)}),
    \\
    \nu_{r,n}
    &:=\frac1{J_r}\sum_{i\in I_r}K_{i,n}\xi_n^{(i)}.
\end{aligned}\]
\end{proposition}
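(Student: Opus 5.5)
The identity is purely algebraic, so I will average the particle-wise PEKI update \eqref{eq: peki update} over the index set \(I_r\) and then add and subtract the two intermediate gains \(K_{i,r,n}\) and \(K_{r,n}^{\mathrm{cl}}\). First I record well-posedness. We assume \(D_{i,n}>0\) and \(D_{i,r,n}>0\) for \(i\in I_r\), so the local covariances are defined; the latter holds automatically since \(J_r\ge2\) and all Gaussian weights are positive. By the preceding proposition on particle-wise residual non-expansion, \(C^{gg}_{i,n}=AC^{\theta\theta}_{i,n}A^\top\succeq0\), and likewise for the cluster-restricted and cluster-EKI covariances. Together with \(R_N\succ0\), every matrix inverted in \(K_{i,n}\), \(K_{i,r,n}\) and \(K^{\mathrm{cl}}_{r,n}\) is positive definite. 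Hence all three gains exist.

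Next, since the cluster \(I_r\) is a fixed index set, \(m_{r,n+1}=\frac1{J_r}\sum_{i\in I_r}\theta_{n+1}^{(i)}\). Substituting the update with \(g_n^{(i)}=A\theta_n^{(i)}\) and \(y_n^{(i)}=y+\xi_n^{(i)}\) gives
\[
m_{r,n+1}=m_{r,n}+\frac1{J_r}\sum_{i\in I_r}K_{i,n}\bigl(y-A\theta_n^{(i)}\bigr)+\frac1{J_r}\sum_{i\in I_r}K_{i,n}\xi_n^{(i)}.
\]
The last sum is exactly \(\nu_{r,n}\).

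In the middle sum I write the telescoping identity
\[
K_{i,n}=(K_{i,n}-K_{i,r,n})+(K_{i,r,n}-K^{\mathrm{cl}}_{r,n})+K^{\mathrm{cl}}_{r,n}.
\]
The first two pieces, after averaging against \(y-A\theta_n^{(i)}\), are by definition \(\delta^{\mathrm{out}}_{r,n}\) and \(\delta^{\mathrm{in}}_{r,n}\). For the third piece, \(K^{\mathrm{cl}}_{r,n}\) does not depend on \(i\), so linearity of \(A\) gives
\[
\frac1{J_r}\sum_{i\in I_r}K^{\mathrm{cl}}_{r,n}\bigl(y-A\theta_n^{(i)}\bigr)=K^{\mathrm{cl}}_{r,n}\bigl(y-Am_{r,n}\bigr).
\]
Collecting the four contributions yields \eqref{eq:exact_peki_cluster_mean_decomposition}.

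There is no analytic obstacle: the statement is exact and uses neither smallness of the leakage nor any bound. The only point needing care is the well-definedness check above. The key use of linearity is the commutation of averaging with \(A\) in the cluster-EKI term. For a nonlinear \(\mathcal G\), the average of \(\mathcal G(\theta_n^{(i)})\) would differ from \(\mathcal G(m_{r,n})\), and an extra linearization term would appear.
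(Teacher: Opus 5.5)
Your proof is correct and takes essentially the same route as the paper's. You average the particle-wise update over the fixed index set \(I_r\), isolate the perturbation term as \(\nu_{r,n}\), and telescope \(K_{i,n}\) through \(K_{i,r,n}\) and \(K_{r,n}^{\mathrm{cl}}\), using linearity to write \(g_n^{(i)}=A\theta_n^{(i)}\) and to pull the \(i\)-independent cluster-EKI gain through the average; note also that \(D_{i,n}>0\) is just as automatic as \(D_{i,r,n}>0\) (all Gaussian weights are positive and \(J\ge J_r\ge 2\)), so no assumption is needed there.
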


\begin{corollary}[Exponential suppression of the cross-cluster correction]
\label{cor:cluster_mean_out_correction}
The cross-cluster correction in
Proposition~\ref{prop:exact_peki_cluster_mean_decomposition} satisfies
\[
    \left\|\delta_{r,n}^{\mathrm{out}}\right\|
    \leq
    \frac{B_n}{J_r}
    \sum_{i\in I_r}
    \frac{\varepsilon_{i,r,n}}
    {(1-\varepsilon_{i,r,n})^2D_{i,r,n}}
    \left\|y-A\theta_n^{(i)}\right\|.
    \label{eq:cluster_mean_out_bound}
\]
Consequently, by Proposition~\ref{prop:cross_cluster_weight_suppression}, the cross-cluster correction is exponentially suppressed as the cluster separation increases relative to \(\kappa\), provided that \(B_n\), the particle residuals, and \(D_{i,r,n}^{-1}\) remain bounded.
\end{corollary}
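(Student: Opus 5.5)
The bound in Corollary~\ref{cor:cluster_mean_out_correction} follows directly from the definition of \(\delta_{r,n}^{\mathrm{out}}\) in Proposition~\ref{prop:exact_peki_cluster_mean_decomposition} together with the gain estimate \eqref{eq:covariance_gain_decoupling} of Proposition~\ref{prop:covariance_gain_decoupling}. The plan has two steps.

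\textbf{Step 1 (triangle inequality).} Bound the average by the average of norms:
\[
    \bigl\|\delta_{r,n}^{\mathrm{out}}\bigr\|
    \le
    \frac1{J_r}\sum_{i\in I_r}
    \bigl\|K_{i,n}-K_{i,r,n}\bigr\|\,
    \bigl\|y-A\theta_n^{(i)}\bigr\|,
\]
using submultiplicativity of the operator norm.

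\textbf{Step 2 (apply the gain estimate).} Bound each gain difference by \eqref{eq:covariance_gain_decoupling}. That estimate is deterministic: it depends only on the ensemble at iteration \(n\) and the partition, not on the forward map being linear. So it applies verbatim with \(g_n^{(j)}=A\theta_n^{(j)}\). This gives
\[
    \bigl\|K_{i,n}-K_{i,r,n}\bigr\|
    \le
    B_n\frac{\varepsilon_{i,r,n}}{(1-\varepsilon_{i,r,n})^2D_{i,r,n}}.
\]
The constant \(B_n\) is common to all \(i\), because it depends only on the ensemble diameters \(R_{\theta,n}, R_{g,n}\) and on \(\mu_N\). It can therefore be pulled out of the sum, which yields the displayed bound.

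\textbf{Consequence.} The exponential-suppression statement follows by inserting Proposition~\ref{prop:cross_cluster_weight_suppression}. For each \(i\in I_r\),
\[
    \varepsilon_{i,r,n}\le (J-J_r)e^{-(d_{i,r,n}^{\mathrm{out}})^2/2\kappa^2}
    \quad\text{and}\quad
    \varepsilon_{i,r,n}<1.
\]
The factor \((1-\varepsilon_{i,r,n})^{-2}\) needs separate handling. Once the separation is large enough that \(\varepsilon_{i,r,n}\le 1/2\), it is at most \(4\). Alternatively, use the sharper first bound in Proposition~\ref{prop:cross_cluster_weight_suppression}, which gives
\[
    \frac{\varepsilon}{(1-\varepsilon)^2}\le \varepsilon\,(1+(J-J_r)e^{-d^2/2\kappa^2})^2 ,
\]
as in Corollary~\ref{cor:cluster_update_discrepancy}.

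Assume \(B_n\), \(\max_i\|y-A\theta_n^{(i)}\|\), and \(\max_i D_{i,r,n}^{-1}\) are bounded. Then the right-hand side is at most a constant times
\[
    (J-J_r)\exp\!\bigl(-\min_{i\in I_r}(d_{i,r,n}^{\mathrm{out}})^2/2\kappa^2\bigr),
\]
which decays exponentially in the separation-to-\(\kappa\) ratio.

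\textbf{Main obstacle.} There is essentially none beyond what is already in Proposition~\ref{prop:covariance_gain_decoupling}. The only point needing care is confirming that \(D_{i,r,n}>0\), so that the cluster-restricted gain is defined; this holds since \(J_r\ge2\) and all Gaussian weights are positive. The other point is keeping the \((1-\varepsilon)^{-2}\) factor controlled uniformly over \(i\in I_r\).
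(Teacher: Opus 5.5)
Your proposal is correct and follows essentially the paper's argument: the triangle inequality on the average defining $\delta_{r,n}^{\mathrm{out}}$, then the particle-wise gain bound of Proposition~\ref{prop:covariance_gain_decoupling} with the common constant $B_n$ pulled out of the sum, then Proposition~\ref{prop:cross_cluster_weight_suppression} for the exponential decay. Your control $\varepsilon/(1-\varepsilon)^2\le\varepsilon\,\bigl(1+(J-J_r)e^{-(d_{i,r,n}^{\mathrm{out}})^2/2\kappa^2}\bigr)^2$ is slightly looser than the $t(1+t)$ bound used in Corollary~\ref{cor:cluster_update_discrepancy}, but it is valid and suffices.
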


\begin{proposition}[Bound on the within-cluster correction]
\label{proposition:within_cluster}
Let $d_{r,n}^{\mathrm{in}}$ denote the within-cluster diameter, defined by $d_{r,n}^{\mathrm{in}}:= \max_{i,j\in I_r}d_{ij,n}$. Then the within-cluster correction in Proposition~\ref{prop:exact_peki_cluster_mean_decomposition} satisfies
\[
    \left\|\delta_{r,n}^{\mathrm{in}}\right\| \leq \frac{B_n}{2J_r} \tanh{\left(\frac{(d_{r,n}^{\mathrm{in}})^2}{4\kappa^2}\right)}\sum_{i\in I_r}\left\|y-A\theta_n^{(i)}\right\|.
\]
\end{proposition}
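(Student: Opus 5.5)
The plan is to bound, for each fixed $i\in I_r$, the gain difference $\|K_{i,r,n}-K_{r,n}^{\mathrm{cl}}\|\le \tfrac{B_n}{2}\tanh\bigl((d_{r,n}^{\mathrm{in}})^2/4\kappa^2\bigr)$. The result then follows from the definition of $\delta_{r,n}^{\mathrm{in}}$ by the triangle inequality and averaging over $i\in I_r$. The key observation is that both gains come from the same formula. $K_{i,r,n}$ uses the normalized weights $w^{(r)}_{ij,n}$ on $I_r$, while $K^{\mathrm{cl}}_{r,n}$ uses the uniform weights $1/J_r$, for which $D=1-1/J_r$ and $\widehat C_{r,n}$ is recovered exactly. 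So $\delta^{\mathrm{in}}$ only measures how far the within-cluster Gaussian weights are from uniform.

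\textbf{Step 1 (pairwise representation).} For any probability weights $w$ on $I_r$ with $D=1-\sum_j w_j^2=\sum_{j\neq k}w_jw_k$, one has
\[
\frac1D\sum_j w_j(\theta_j-\bar\theta)(g_j-\bar g)^\top=\sum_{j\neq k}\pi_{jk}\,\tfrac12(\theta_j-\theta_k)(g_j-g_k)^\top,\qquad \pi_{jk}:=\frac{w_jw_k}{D}.
\]
Here $\pi$ is a probability distribution on ordered pairs $j\neq k$. Hence each covariance is an average of secant matrices $X_{jk}$ with $\|X_{jk}\|\le R_{\theta,n}R_{g,n}/2$ (respectively $R_{g,n}^2/2$). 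In particular $\|C^{\theta g}_{i,r,n}\|,\|\widehat C^{\theta g}\|\le R_{\theta,n}R_{g,n}/2$. This representation removes the $D^{-1}$ factor that appeared in Proposition~\ref{prop:covariance_gain_decoupling}.

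\textbf{Step 2 (total variation from a ratio bound).} For $j,k\in I_r$, $\widetilde w_{ij,n}\in[e^{-a},1]$ with $a=(d^{\mathrm{in}}_{r,n})^2/2\kappa^2$. The pair weights $\pi_{jk}\propto\widetilde w_{ij}\widetilde w_{ik}$ therefore have max/min ratio at most $e^{2a}$. The uniform-weight pair distribution $\pi^u$ is uniform on the pairs. I will use the elementary fact that a distribution whose ratio to the uniform distribution on the same set lies in an interval of log-width $b$ has total-variation distance from uniform at most $\tanh(b/4)$. The extremal case puts mass ratio $e^b$ on part of the set, and optimizing over the split gives $(e^{b/2}-1)/(e^{b/2}+1)$. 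With $b=2a$ this gives $\mathrm{TV}(\pi,\pi^u)\le\tanh\bigl((d^{\mathrm{in}}_{r,n})^2/4\kappa^2\bigr)=:t$.

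Since $\sum(\pi-\pi^u)=0$, the difference of the two averages satisfies
\[
\Bigl\|\sum(\pi-\pi^u)X\Bigr\|\le \mathrm{TV}\cdot\max_{a,b}\|X_a-X_b\|.
\]
This yields $\|C^{\theta g}_{i,r,n}-\widehat C^{\theta g}\|\le R_{\theta,n}R_{g,n}\,t$ and $\|C^{gg}_{i,r,n}-\widehat C^{gg}\|\le R_{g,n}^2\,t$, where $\widehat C^{\theta g}=\widehat C_{r,n}^{\theta\theta}A^\top$ and $\widehat C^{gg}=A\widehat C_{r,n}^{\theta\theta}A^\top$ in the linear case.

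\textbf{Step 3 (gain perturbation).} Write
\[
K_1-K_2=(C_1^{\theta g}-C_2^{\theta g})(C_1^{gg}+R_N)^{-1}+C_2^{\theta g}(C_1^{gg}+R_N)^{-1}(C_2^{gg}-C_1^{gg})(C_2^{gg}+R_N)^{-1}.
\]
Using $\|(C^{gg}+R_N)^{-1}\|\le\mu_N^{-1}$ together with Steps 1--2 gives
\[
\|K_1-K_2\|\le \frac{R_{\theta,n}R_{g,n}}{\mu_N}\,t+\frac{R_{\theta,n}R_{g,n}}{2}\cdot\frac{R_{g,n}^2}{\mu_N^2}\,t=\frac{B_n}{2}\,t.
\]
Averaging over $i\in I_r$ then finishes the proof.

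The main obstacle is Step 2. I need a clean proof that the ratio bound implies the TV bound $\tanh(b/4)$, via an extremal two-level argument, and I need to make sure the cluster diameters $R_{\theta,n},R_{g,n}$ (bounded by the full-ensemble ones) are what enter the bound.
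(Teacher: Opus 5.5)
Your proof is correct and follows essentially the route the paper's constants point to (the proof itself is deferred to the supplement, which I cannot check). The pairwise form of Lemma~\ref{lem:pairwise_weighted_covariance} writes both $C_{i,r,n}$ and the cluster-EKI covariance as averages of the same secant matrices over ordered pairs. Comparing the Gaussian pair weights (max/min ratio at most $e^{(d_{r,n}^{\mathrm{in}})^2/\kappa^2}$) with the uniform ones gives the factor $\tanh\bigl((d_{r,n}^{\mathrm{in}})^2/4\kappa^2\bigr)$ times the secant diameter. The gain-perturbation splitting from the proof of Proposition~\ref{prop:covariance_gain_decoupling} then yields exactly $B_n/2$, matching the stated bound. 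The step you flagged also holds: with $\mathrm{TV}=\max_A[\pi(A)-u(A)]$ and $\alpha$ the fraction of pairs in $A$, the ratio bound gives $\pi(A)-u(A)\le \alpha(1-\alpha)(e^b-1)/(1+\alpha(e^b-1))$, whose maximum over $\alpha\in[0,1]$ is $\tanh(b/4)$.
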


\begin{remark}[Polarization radius trade-off and scope]
\label{rem:peki_bandwidth_tradeoff}
The cross-cluster correction $\delta_{r,n}^{\mathrm{out}}$ captures the effect of particles outside $I_r$ on the cluster-mean update. The within-cluster correction $\delta_{r,n}^{\mathrm{in}}$ captures the effect of differences between the cluster-restricted PEKI gains and the ordinary cluster-EKI gain. The term \(\nu_{r,n}\) is due to observation perturbations. Decreasing \(\kappa\) can suppress sufficiently distant cross-cluster interactions and hence reduce \(\delta_{r,n}^{\mathrm{out}}\), but overly small \(\kappa\) can make the within-cluster gains more heterogeneous and may enlarge \(\delta_{r,n}^{\mathrm{in}}\). Thus, effective polarization requires $\kappa$ to balance within-cluster coupling against cross-cluster separation: $d_{r,n}^{\mathrm{in}} \ll\kappa\ll\min_{i\in I_r}d_{i,r,n}^{\mathrm{out}}$. No monotone dependence on \(\kappa\), cluster-mean convergence, or parameter-space convergence rate is claimed without further assumptions.

\end{remark}

\section{Hyperparameter Tuning for Polarization Radius}\label{sec: choose}

The polarization radius $\kappa$ defines the effective interaction range of PEKI: it sets the distance scale over which particles substantially influence a target particle's update, and hence the separation scale at which separated particle clusters become effectively decoupled.

We choose the polarization radius $\kappa$ by first identifying a candidate interval and then selecting a value using a one-step diagnostic scan, which requires no additional forward-model evaluations. A rough upper guideline is based on the largest relevant separation within the initial ensemble, while a rough lower guideline is set by the need to retain enough effective nearby particles for stable local covariance estimation. The final choice depends strongly on the forward-model outputs evaluated at the initial ensemble and therefore cannot be prescribed a priori; this dependence is assessed inexpensively through the one-step scan.

\subsection{Upper guideline} We first choose the upper guideline for \(\kappa\) from the prior-covered region or an expected separation between modes. In the discrete algorithm, the largest relevant interaction distance is set by the largest separation between particles present in the initial ensemble. The spatial extent of the prior-covered region therefore provides a natural upper guideline.

Consider the raw weight of a particle at distance \(r\) relative to the raw weight at zero distance:
\[
    \frac{\widetilde w(r)}{\widetilde w(0)}
    =
    \exp\left(-\frac{r^2}{2\kappa^2}\right).
\]
so a particle at \(r=3\kappa\) has only about \(1.1\%\) of the raw weight of a coincident particle. We therefore use \(3\kappa\) as an approximate effective interaction range. If \(D\) denotes the spatial range of the prior-covered region or the expected separation between modes, this gives the heuristic upper guideline $\kappa \lesssim D/3$ for reducing cross-cluster interaction. 

For example, when \(M_n=I\), a one-dimensional uniform prior on \([a,b]\) gives \(D=b-a\) and hence \(\kappa_u\lesssim(b-a)/3\). For a Gaussian prior with standard deviation \(\sigma\), taking the effective support as \([\mu-3\sigma,\mu+3\sigma]\) gives \(D=6\sigma\) and \(\kappa_u\lesssim2\sigma\). In higher dimensions, \(D\) should characterize the relevant separation directions rather than the full corner-to-corner diameter of the prior box.

Proposition~\ref{prop:cross_cluster_weight_suppression} also provides a theoretical upper guideline for \(\kappa\). By prescribing a tolerance on cross-cluster leakage, the exponential bound can be inverted to obtain a sufficient upper bound on \(\kappa\). This theoretical guideline is optional since the simpler heuristic guideline above is sufficient for the practical procedure.

\subsection{Lower guideline} Theoretically, the smallest practically useful value of \(\kappa\) is limited by the initial ensemble, since stable estimation of the local covariance matrices requires sufficiently many neighboring particles within the effective interaction range. In principle, a lower guideline can therefore be obtained by imposing a minimum effective sample size $\mathrm{ESS}_{i,n}(\kappa) =\left[\sum_{j=1}^J w_{ij,n}(\kappa)^2\right]^{-1}$ to exceed a prescribed threshold. In practice, however, this lower guideline need not be determined precisely; it only needs to be small enough for the initial candidate set to reach a strongly localized regime. We recommend \(\kappa_l=0.05D\) as a simple initial choice relative to the reference scale \(D\). If the scan does not reach such a regime, the candidate interval can be extended toward smaller \(\kappa\). Because all candidates reuse the cached forward-model outputs and require only a single PEKI update, this adjustment requires no additional forward-model evaluations and is typically inexpensive.


\subsection{One-step scan} Finally, we perform an efficient one-step diagnostic scan over the candidate set to select \(\kappa\) from the empirical effect of a single PEKI update. For each candidate value of \(\kappa\), we reuse the same initial ensemble, cached forward-model outputs, and pairwise distances; only the kernel weights, local means and covariances, Kalman gains, and particle updates are recomputed. Therefore, if the ensemble size is \(J\), the scan requires only \(J\) forward-model evaluations, independently of the number of candidate values of \(\kappa\). A single PEKI step is not intended to fully resolve the final solutions but provides a trend diagnostic. As \(\kappa\) decreases, the tendency toward mode separation typically becomes stronger, reaches a peak, and then weakens; see \cref{fig:himmelblau_one_step_peki_kappa_scan}. We select \(\kappa_\star\) from this intermediate regime, guided visually and by larger density-based clustering validation (DBCV) values. If this trend does not appear, the candidate set is extended toward smaller \(\kappa\); if it is present but not clearly resolved, the grid is refined.

After $\kappa_\star$ is selected, the corresponding updated ensemble is not retained as the initial ensemble for the remaining PEKI iterations. Thus, the diagnostic update also serves as the first step of the full \(N\)-step PEKI run. Scanning additional candidate values adds only the weighting, covariance, and update computations associated with the unselected candidates.

\begin{algorithm}
\caption{Practical selection of the polarization radius $\kappa$}
\label{alg:kappa_selection}
\begin{algorithmic}

\STATE Draw the initial ensemble from prior $\{\theta_0^{(j)}\}_{j=1}^J$ .

\STATE Evaluate the forward map at the initial ensemble once $g_0^{(j)}=\mathcal{G}(\theta_0^{(j)})$ and cache the responses $\{g_0^{(j)}\}_{j=1}^J$.

\STATE Determine an upper guideline $\kappa_u$ based on the reference scale $D$ or Proposition~\ref{prop:cross_cluster_weight_suppression}.

\STATE Determine a candidate set $\{\kappa_l,\kappa_1,\kappa_2, \cdots, \kappa_u\}, \quad 0 < \kappa_l < \kappa_1 \cdots < \kappa_u$.\

\STATE Scan the candidate set as follows:
\FOR{each candidate $\kappa$}
    \STATE Run one step of PEKI update for $\{\theta_1^{(j)}(\kappa)\}_{j=1}^J$  via Eq. \cref{eq: peki update}.
\ENDFOR

\STATE Select $\kappa_\star$ in the intermediate regime showing clear mode separation while retaining appreciable particle motion. Large DBCV values are also preferred.

\IF{no suitable candidate is identified}
    \STATE Extend the candidate set toward smaller values of $\kappa$,
    or refine the grid around the observed transition regime, and repeat the scan.
\ENDIF

\RETURN $\kappa_\star$ and $\{\theta_1^{(j)}(\kappa_*)\}_{j=1}^J$.

\end{algorithmic}
\end{algorithm}

\subsection{Computational cost}

When considering the computational requirements of PEKI itself, excluding the one-step procedure used to select \(\kappa\), the main considerations are the same as ~\cite{wacker2026perspectives}. In contrast to standard EKI, PEKI requires particle-dependent weighted moments for every particle. The main additional computational effort therefore lies in computing these weighted covariances. With a naive implementation, the covariance associated with each of the $J$ particles requires a weighted average over all $J$ ensemble members, resulting in $O(J^2)$ computational cost per iteration. This additional overhead does not require additional forward-model evaluations.

The computational cost of the one-step diagnostic scan is considered separately. The scan requires no additional forward-model evaluations, since all candidate values of \(\kappa\) are evaluated using the forward-model outputs already computed for the same ensemble. The remaining computational effort consists of computing the pairwise particle distances and, for each candidate \(\kappa\), the corresponding weights, local means and covariances, Kalman gains, and candidate updates. The pairwise distances need to be computed only once and can be reused for all candidate values. Thus, for $K$ candidate values, the naive computational cost of the scan scales as $O(KJ^2)$.

\section{Numerical results}
\label{sec: Numerical Results}

The numerical experiments examine the practical behavior of PEKI in finite-ensemble, discrete-time inverse problems, with emphasis on the identification of multiple data-consistent solutions across problems of increasing complexity, practical selection of the polarization radius, and the mechanisms predicted by the theoretical analysis.

The key motivations and conclusions of the numerical results are summarized below:

\textbullet\ We first use a two-dimensional Himmelblau inverse problem to demonstrate practical selection of the polarization radius through the proposed one-step diagnostic scan. This example also provides empirical evidence that the theoretical cross-cluster suppression bound is sharp at the exponential scale and shows the empirical late-stage decay of the cluster-wise MSE.

\textbullet\ We then consider a Rastrigin-style inverse problem with a highly nonconvex likelihood landscape containing several dominant and secondary modes, and compare PEKI with standard stochastic EKI, GMM-EKI, and GMKI. In the reported runs, PEKI maintains separated clusters near all dominant modes and multiple secondary modes, whereas the baselines recover only part of the multimodal structure.

\textbullet\ Finally, we test PEKI on a high-dimensional Navier--Stokes inverse problem with a bimodal posterior. Across \(K=10\), \(50\), and \(128\) dimensional cases, PEKI identifies the true coefficient values in most dimensions and reconstructs visually accurate vorticity fields from the identified modes, with relative reconstruction errors below \(10\%\).

\subsection{Himmelblau inverse problem}

We consider a two-dimensional Himmelblau inverse problem ~\cite{bungert2025polarized, wacker2026perspectives} to demonstrate practical selection of the polarization radius $\kappa$ through the proposed one-step scan. For \(\theta=(\theta_1,\theta_2)^\top \in \mathbb{R}^2\), the nonlinear forward map is
\[
    G(\theta)
=
\begin{pmatrix}
\theta_1^2+\theta_2 \\
\theta_1+\theta_2^2
\end{pmatrix} + \eta,
\qquad
\eta \sim \mathcal{N}(0,\Gamma), \quad \Gamma=0.5^2I.
\]

Given the observation $y=(11,7)^\top$, the corresponding data-misfit function is
\[
    \Phi(\theta)
=
\frac{1}{2}
\left\|
\Gamma^{-1/2}\bigl(G(\theta)-y\bigr)
\right\|^2
=
\frac{1}{2}
\left[
\left(\frac{\theta_1^2+\theta_2-11}{0.5}\right)^2
+
\left(\frac{\theta_1+\theta_2^2-7}{0.5}\right)^2
\right],
\]
with four zero-misfit solutions, corresponding to the four posterior modes under the uniform prior. We initialize $J=1500$ particles from the uniform prior on $[-6,6]^2$ and run PEKI for $20$ iterations.

Figure~\ref{fig:himmelblau_results} compares the four-mode reference posterior with standard EKI and PEKI. Figure~\ref{fig:himmelblau_standard_eki} shows the final standard stochastic EKI ensembles: the update moves particles toward the low-misfit region but does not separate them into four distinct clusters. The one-step scan in Fig.~\ref{fig:himmelblau_one_step_peki_kappa_scan} shows that large $\kappa$ produces an update close to standard EKI. As $\kappa$ decreases, distinct clusters emerge and remain stable near $\kappa=1.5$, with the DBCV value reaching its maximum of $0.2811$. For still smaller $\kappa$, the one-step particle displacements become small, and the ensemble changes little, consistent with the small-$\kappa$ behavior described in Remark~\ref{rem:small_kappa_numerical_implications}. We therefore select $\kappa=1.5$. Over the full $20$ iterations, the particles gradually concentrate around all four zero-misfit solutions (Fig.~\ref{fig:himmelblau_full_peki_kappa1p5}), demonstrating that the one-step scan can select the polarization radius before the full PEKI run.

\begin{figure}[!htbp]
    \centering
    \begin{subfigure}[t]{0.22\textwidth}
        \centering
        \includegraphics[width=\textwidth]{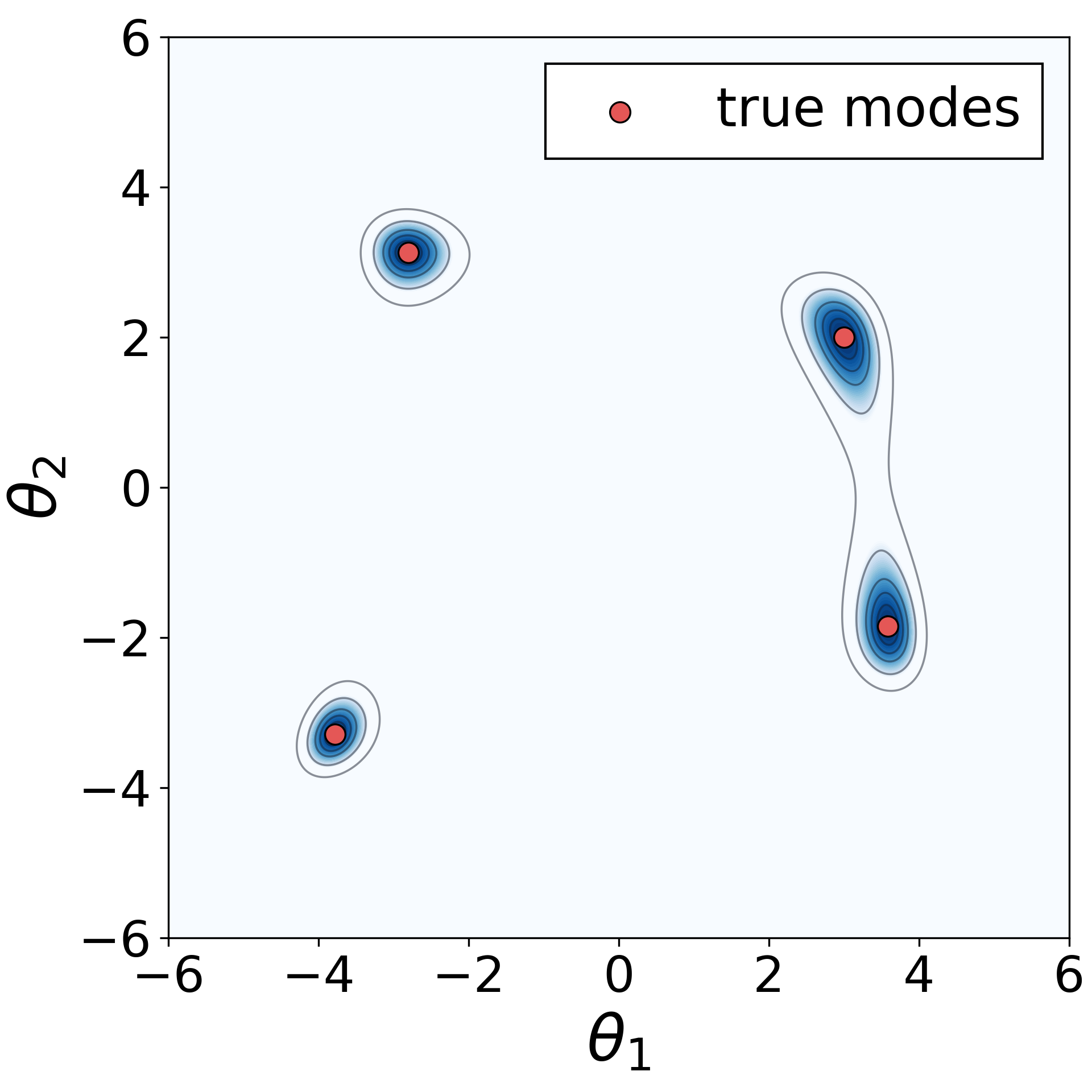}
        \caption{Reference posterior.}
        \label{fig:himmelblau_reference_posterior}
    \end{subfigure}
    \hfill
    \begin{subfigure}[t]{0.22\textwidth}
        \centering
        \includegraphics[width=\textwidth]{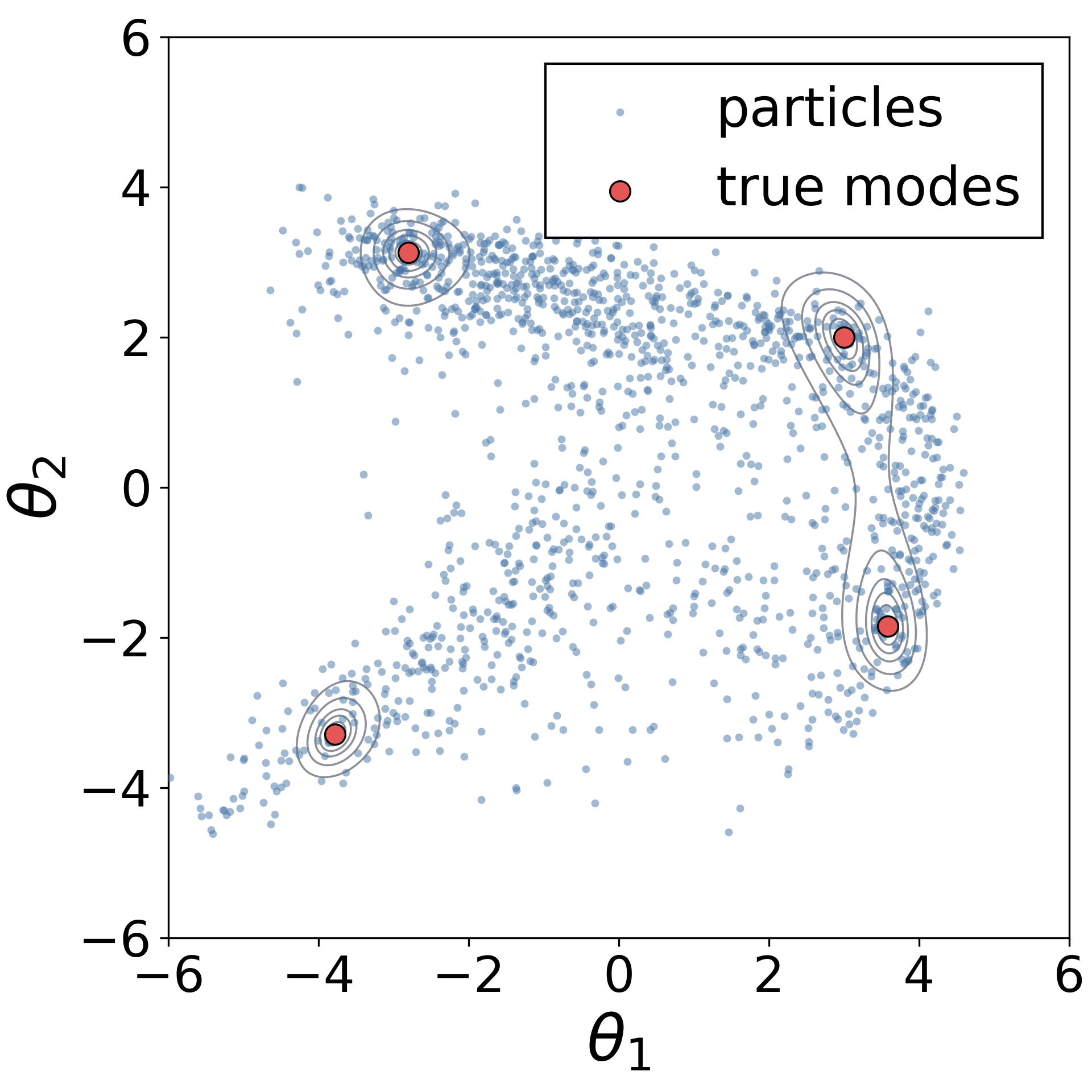}
        \caption{Standard EKI.}
        \label{fig:himmelblau_standard_eki}
    \end{subfigure}
    \hfill
    \begin{subfigure}[t]{0.46\textwidth}
        \centering
        \includegraphics[width=\textwidth]{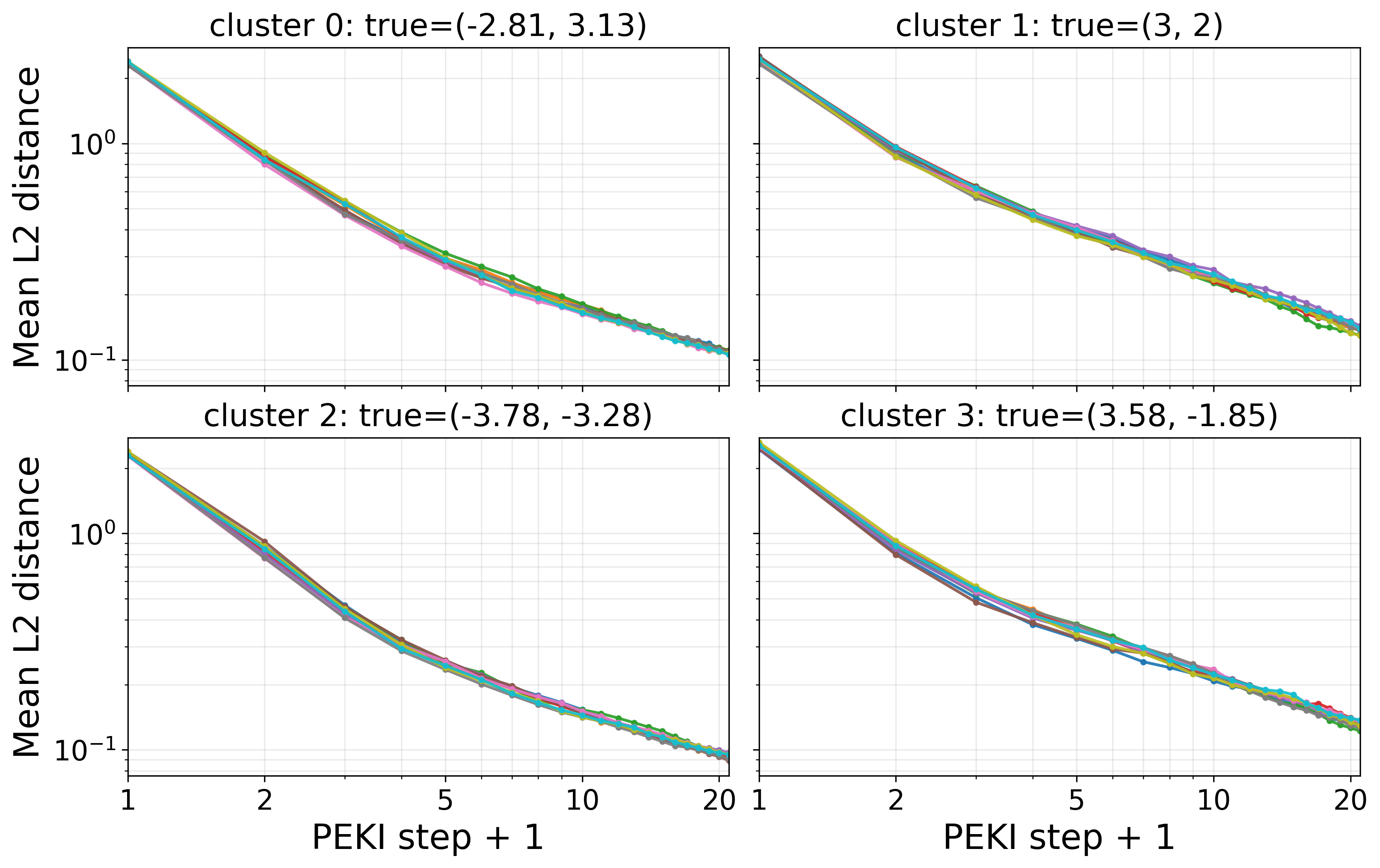}
        \caption{Cluster-wise MSE.}
        \label{fig:himmelblau_peki_convergence}
    \end{subfigure}
    
    \vspace{0.5em}
    
    \begin{subfigure}[t]{0.48\textwidth}
        \centering
        \includegraphics[width=\textwidth]{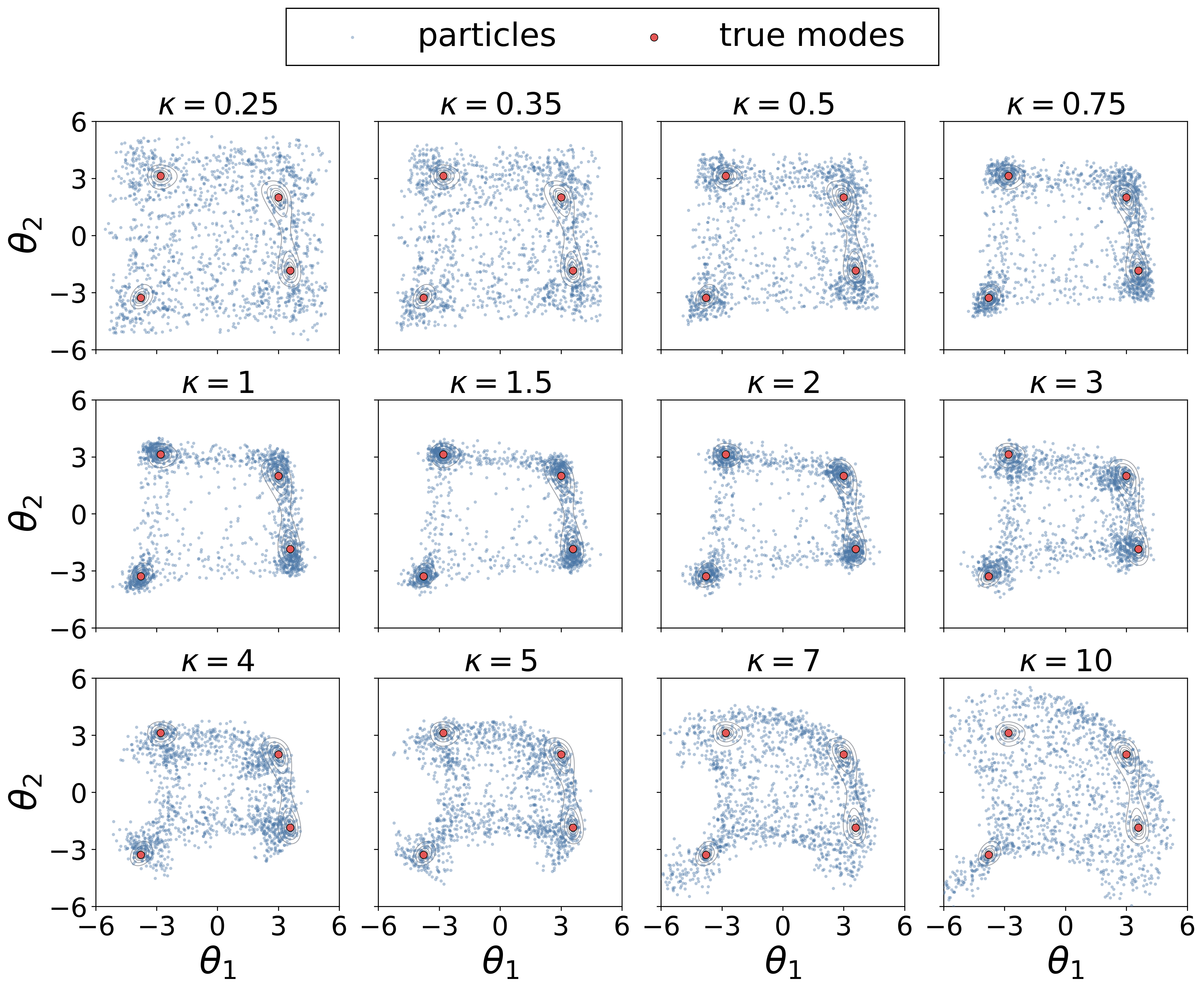}
        \caption{One-step PEKI scan over \(\kappa\).}
        \label{fig:himmelblau_one_step_peki_kappa_scan}
    \end{subfigure}
    \hfill
    \begin{subfigure}[t]{0.48\textwidth}
        \centering
        \includegraphics[width=\textwidth]{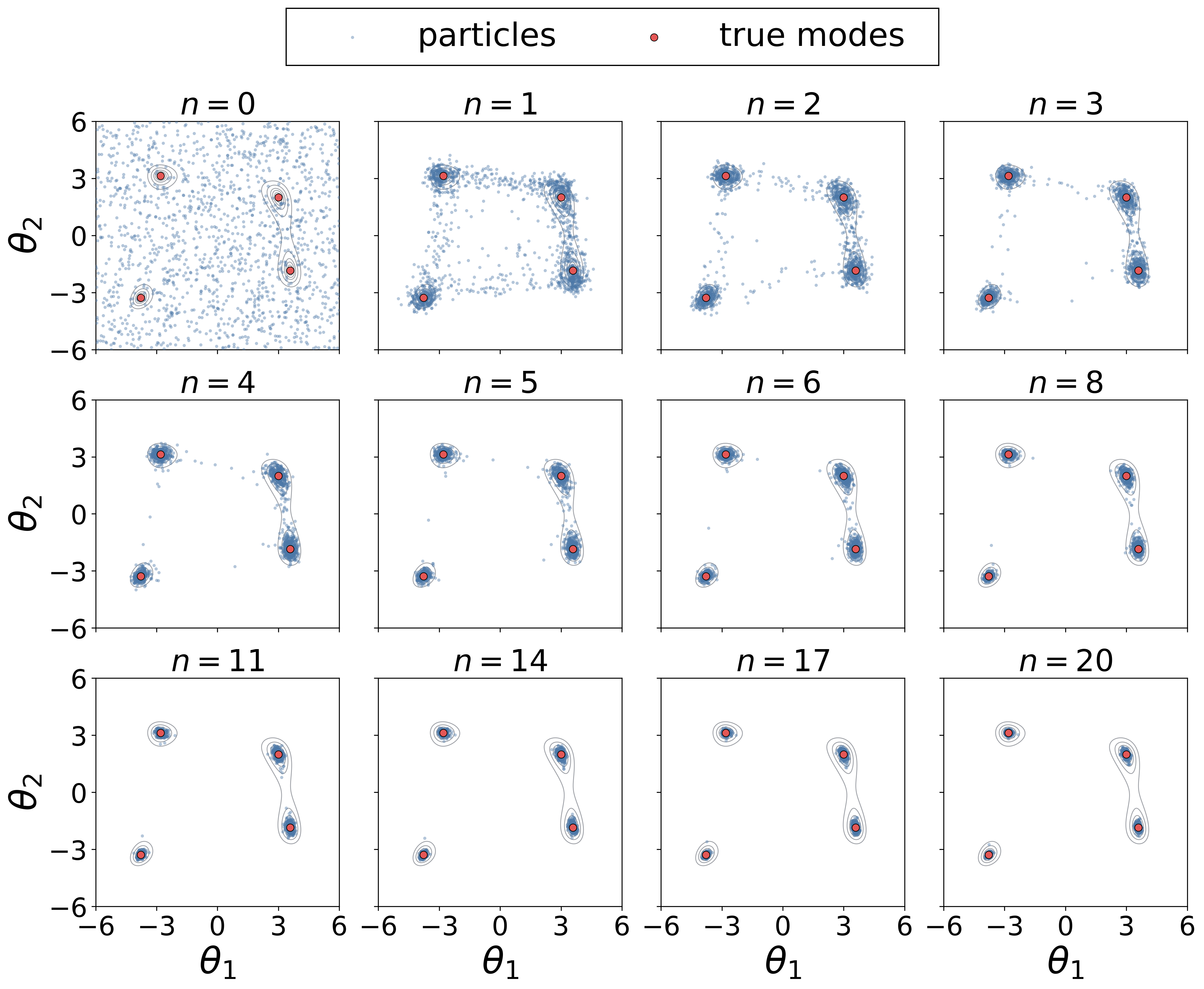}
        \caption{Full PEKI with \(\kappa=1.5\).}
        \label{fig:himmelblau_full_peki_kappa1p5}
    \end{subfigure}
    \caption{Himmelblau inverse problem. Panels (a)--(e) show the reference posterior, the final standard stochastic EKI ensembles, cluster-wise MSE on a log--log scale, the one-step scan over \(\kappa\), and the full PEKI result with \(\kappa=1.5\), respectively.}
    \label{fig:himmelblau_results}
\end{figure}

To examine the exponential suppression of cross-cluster interactions, we fix the ensemble and vary the polarization radius $\kappa$. For each particle $i\in I_r$, we evaluate the cross-cluster leakage $\varepsilon_{i,r,n}(\kappa)$ from Proposition~\ref{prop:cross_cluster_weight_suppression}, together with the discrepancy between the full PEKI update and the cluster-restricted PEKI update, $\left\| \theta_{n+1}^{(i)}(\kappa) - \widetilde{\theta}_{n+1}^{(i,r)}(\kappa) \right\|_2$ from Corollary~\ref{cor:cluster_update_discrepancy}.

The observed cross-cluster leakage and update discrepancy closely follow the exponential dependence on $1/\kappa^2$ predicted by the theoretical bounds. Figure~\ref{fig:himmelblau_exponential_cross_cluster} plots both quantities against $1/\kappa^2$ on logarithmic vertical scales. The leakage (Fig.~\ref{fig:himmelblau_weight_leakage}) and update discrepancy (Fig.~\ref{fig:himmelblau_update_discrepancy}) are approximately linear on the semilogarithmic scale before the latter reaches numerical precision, indicating that the leading exponential scale already dominates over the resolved range and is consistent with Proposition~\ref{prop:cross_cluster_weight_suppression} and Corollary~\ref{cor:cluster_update_discrepancy}. Zero discrepancies are displayed using a floor of $2.78\times10^{-18}$, one tenth of the smallest nonzero value, which creates the apparent plateau at small $\kappa$.

To quantify the update-discrepancy exponential trend in Fig.~\ref{fig:himmelblau_update_discrepancy}, we fit the discrepancy for each particle by
\[
\left\| \theta_{n+1}^{(i)}(\kappa) - \widetilde{\theta}_{n+1}^{(i,r)}(\kappa) \right\|_2
\approx
A_i
\exp\left(-\frac{\lambda_i}{\kappa^2}\right),
\]
where $A_i$ and $\lambda_i$ are particle-dependent fitting parameters. To avoid contamination from the numerical-precision regime, the fit is restricted to discrepancy values above a numerical cutoff of $10^{-14}$, corresponding approximately to $1\le 1/\kappa^2<8.2$. Among the particles, the median particle-wise $R^2$ is $0.99977$, and the $10$th percentile is $0.99748$, confirming near-linearity on the semilogarithmic scale.

Corollary~\ref{cor:cluster_update_discrepancy} suggests a leading decay rate $(d_{i,r,n}^{\rm out})^2/2$, whereas the correction term decays at the faster rate $(d_{i,r,n}^{\rm out})^2$. We accordingly compare the fitted particle-wise rates $\lambda_i$ with $(d_{i,r,n}^{\rm out})^2/2$. After estimating $\lambda_i$ separately for each particle, we regress $\lambda_i$ against $(d_{i,r,n}^{\rm out})^2/2$ across all particles and obtain
\[
\lambda_i
\approx
0.055
+
1.0007\,\frac{(d_{i,r,n}^{\rm out})^2}{2},
\qquad
R^2 = 0.9949.
\]
Thus, the discrepancy follows the predicted exponential dependence on $1/\kappa^2$, and its fitted particle-wise rates closely match the leading rates $(d_{i,r,n}^{\rm out})^2/2$. These results provide empirical evidence that this leading exponential scale is attained by the actual update discrepancy and is therefore sharp at the level of the exponential rate.

We further examine the empirical convergence behavior. PEKI recovers separated particle clusters near all modes of the reference solution as
\begin{equation}
    \text{MSE}=\frac{1}{|I_r|} \sum_{i\in I_r} \left\|\theta_n^{(i)}-\theta_r\right\|_2^2.
    \label{eq:mse error}
\end{equation}
Figure~\ref{fig:himmelblau_peki_convergence} reports the cluster-wise MSE over 10 random seeds. The log--log curves decrease rapidly during the first three iterations and more slowly thereafter. Fitting the remaining 18 values with a power law \(e_n=Cn^{-\alpha}\) gives \(R^2\in[0.9950,0.9996]\) across the four clusters, while the floor-corrected geometric model \(e_n=e_\infty+Cq^n\) gives \(R^2\in[0.9908,0.9967]\). Both models therefore describe the late-stage decay well.

\begin{figure}[!htbp]
    \centering
    \begin{subfigure}[t]{0.48\textwidth}
        \centering
        \includegraphics[width=\textwidth]{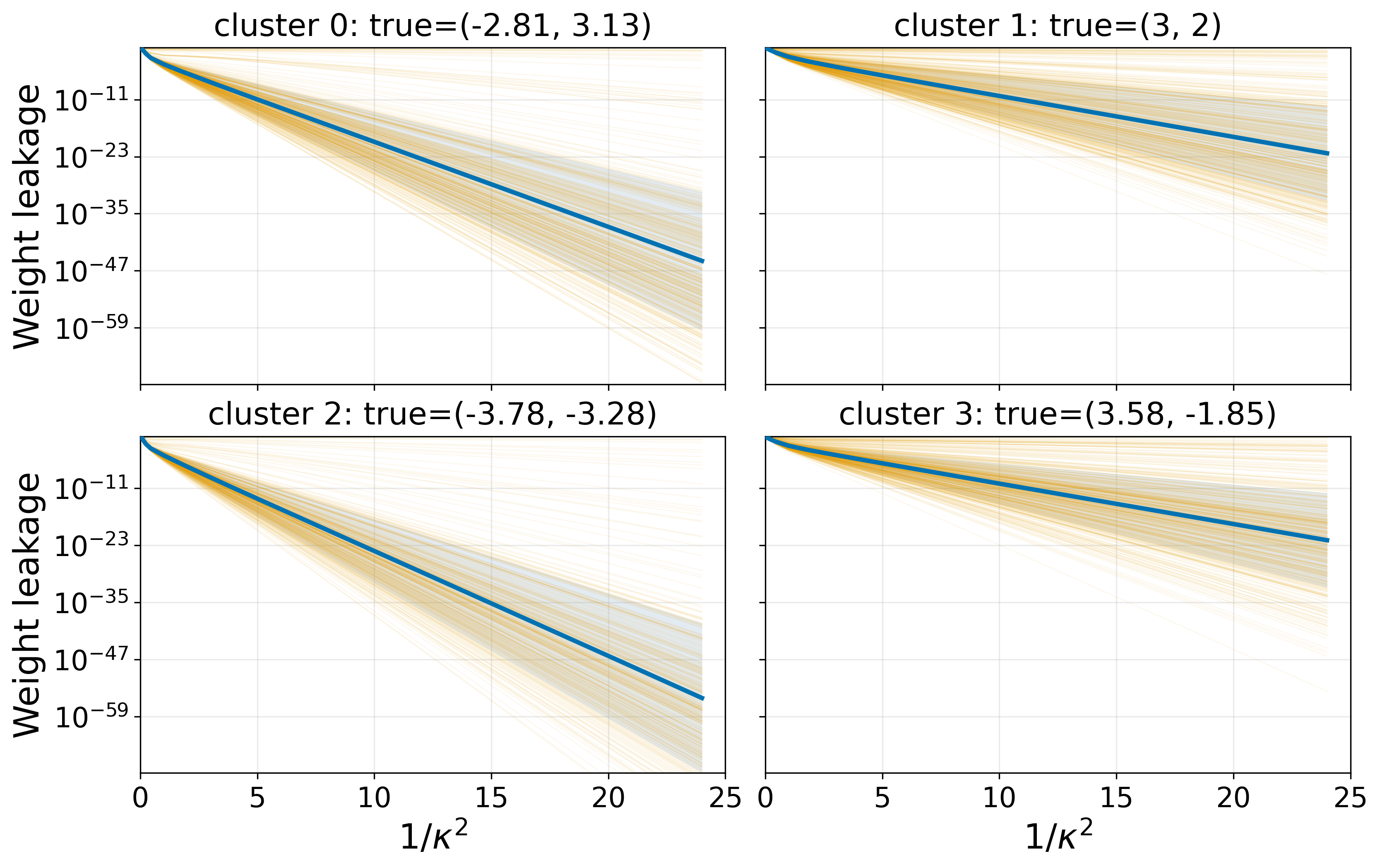}
        \caption{Cross-cluster weight leakage.}
        \label{fig:himmelblau_weight_leakage}
    \end{subfigure}
    \hfill
    \begin{subfigure}[t]{0.48\textwidth}
        \centering
        \includegraphics[width=\textwidth]{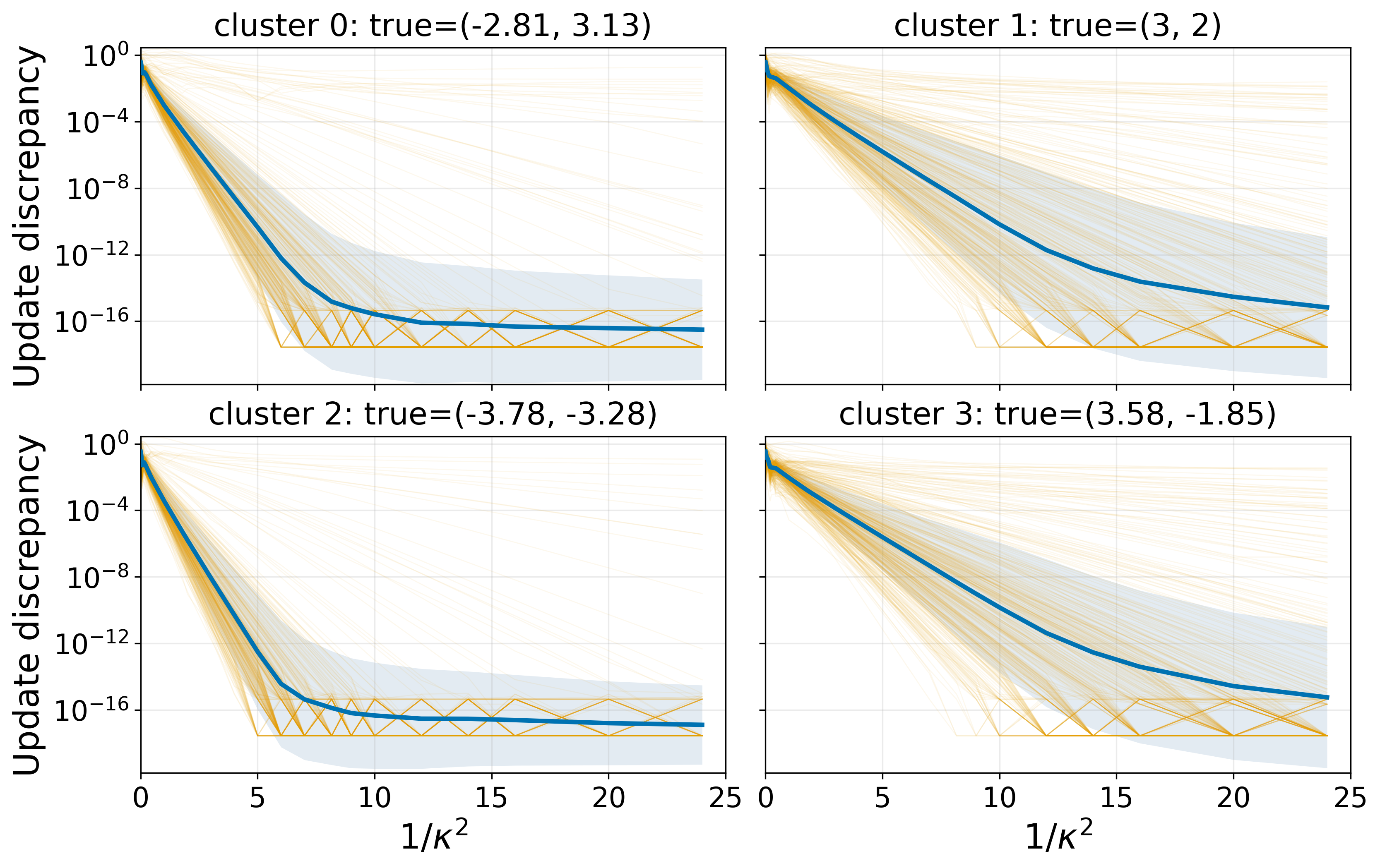}
        \caption{Update discrepancy.}
        \label{fig:himmelblau_update_discrepancy}
    \end{subfigure}
    \caption{Empirical cross-cluster suppression consistent with the theoretical exponential upper bound. Panels (a) and (b) show the leakage and the full-versus-cluster-restricted update discrepancy against $1/\kappa^2$ on semilogarithmic scales. Both exhibit approximately linear semilogarithmic trends, and the update discrepancy eventually reaches numerical precision. Thin orange curves represent individual particles, the thick blue curve the ensemble mean, and the gray band one standard deviation.}
    \label{fig:himmelblau_exponential_cross_cluster}
\end{figure}

\subsection{Rastrigin inverse problem.}

Inspired by the Rastrigin-type function in \cite{bungert2025polarized}, we construct a Rastrigin-style inverse problem to test whether PEKI maintains coverage of multiple solution modes in a highly nonconvex landscape with many competing local minima.

For $\theta=(\theta_1,\theta_2)^\top\in\mathbb{R}^2$, the two-dimensional Rastrigin function \cite{rastrigin1974systems} is
\[
    R(z)=20+\sum_{i=1}^2\left(z_i^2-10\cos(2\pi z_i)\right),
\]
and the three-mode potential in \cite{bungert2025polarized} multiplies shifted copies of this oscillatory landscape:\[
    V(\theta)
=
\frac{1}{8}
R(\theta)\,
R\!\left(\theta-\begin{bmatrix}3\\2\end{bmatrix}\right)\,
R\!\left(\theta+\begin{bmatrix}1\\3.5\end{bmatrix}\right).
\]

We define an inverse problem with a scalar forward map
\[
G(\theta)=\sqrt{2\max\{V(\theta),0\}},
\]
and an artificial observation $y=0$ under zero-mean Gaussian noise with variance $\Gamma=1$. The resulting data-misfit is
\[
    \Phi(\theta)
=
\frac{1}{2}\left(G(\theta)-y\right)^\top
\Gamma^{-1}
\left(G(\theta)-y\right)
=
\frac{1}{2}G(\theta)^2.
\]

A local minimum of $V$ is a local maximum of the log-posterior because
\[
    \pi(\theta\mid y)
    =
    \exp \big[-V(\theta)+\mathrm{const.}\big]
\]
The exponential transformation makes the potential and posterior visualizations differ substantially. The three global minima of $V$, $(0,0)$, $(3,2)$, and $(-1,-3.5)$, appear as the dominant posterior modes in Fig.~\ref{fig:rastrigin_reference_posterior}. Other local minima below the threshold $V_{\mathrm{crit}} = -\log(\epsilon_{\mathrm{mach}})
    \approx 36.04$ are exponentially suppressed and disappear on the linear posterior-density scale while remaining visible in the log-posterior plot (Fig.~\ref{fig:rastrigin_reference_posterior_log}). Thirdly, minima above $V_{\mathrm{crit}}$ have relative posterior density below the double-precision machine epsilon $\epsilon_{\mathrm{mach}}\approx2.22\times10^{-16}$ and are clipped to zero, so their structures remain invisible even after taking the logarithm. The domain therefore contains three categories of structures: three dominant modes, secondary modes visible only on the log-posterior scale, and highly suppressed minima of $V$ that are numerically invisible in the posterior visualization.

We compare standard stochastic EKI, with and without covariance inflation, against PEKI under the same ensemble budget. Each method uses $J=15000$ particles, $N=20$ iterations, and step size $h=1/20$; PEKI uses $\kappa=0.1$.

PEKI maintains particle coverage across substantially more separated low-misfit regions than standard EKI and the tested Gaussian-mixture baselines in this highly oscillatory problem. Figure~\ref{fig:rastrigin_results} shows the final ensembles in the parameter plane and their coordinate-wise marginals. Standard stochastic EKI does not represent the multimodal posterior reliably: some basin structure appears in the $\theta_1$ marginal, whereas the $\theta_2$ marginal remains essentially Gaussian-like and misses the separated high-density regions (Fig.~\ref{fig:rastrigin_final_ensemble_standard_eki}). Covariance inflation reveals more basin structure in $\theta_2$ but degrades the $\theta_1$ marginal (Fig.~\ref{fig:rastrigin_final_ensemble_standard_eki_inflation}).

PEKI instead produces separated clusters near the three dominant posterior modes and near secondary modes associated with local minima of $V$ visible on the log-posterior scale (Fig.~\ref{fig:rastrigin_final_ensemble_peki}). It therefore maintains particle coverage across multiple low-misfit regions of the oscillatory landscape. PEKI also forms clusters near highly suppressed local basins of $V$ that are not visible as peaks even in the log-posterior plot. These should not be interpreted as recovered posterior components and are more likely caused by the interaction of the oscillatory landscape with stochastic EKI perturbations. Restricting the prior from $[-6,6]^2$ to $[-2,4]\times[-4,4]$ substantially reduces or eliminates these spurious clusters (Fig.~\ref{fig:rastrigin_final_ensemble_peki_small_prior}).

We show the results of other baseline methods: GMM-EKI~\cite{dovera2011multimodal} in Fig.~\ref{fig:rastrigin_gmm_eki} and GMKI~\cite{chen2024efficient} in Fig.~\ref{fig:rastrigin_gmki}.  In these runs, both Gaussian-mixture multimodal methods can degenerate into a few components and therefore have difficulty recovering all possible solutions in this challenging case. More extensive hyperparameter tuning or repeated runs may improve performance.

Overall, these results support the central purpose of the example: the polarization enables PEKI to maintain particle coverage across multiple low-misfit regions in a highly oscillatory landscape with many competing basins. At the same time, the naive PEKI implementation identifies multiple low-misfit regions but does not aim to represent the posterior distribution or recover their relative posterior masses.

\begin{figure}[!htbp]
    \centering
    \begin{subfigure}[t]{0.31\textwidth}
        \centering
        \includegraphics[width=\textwidth]{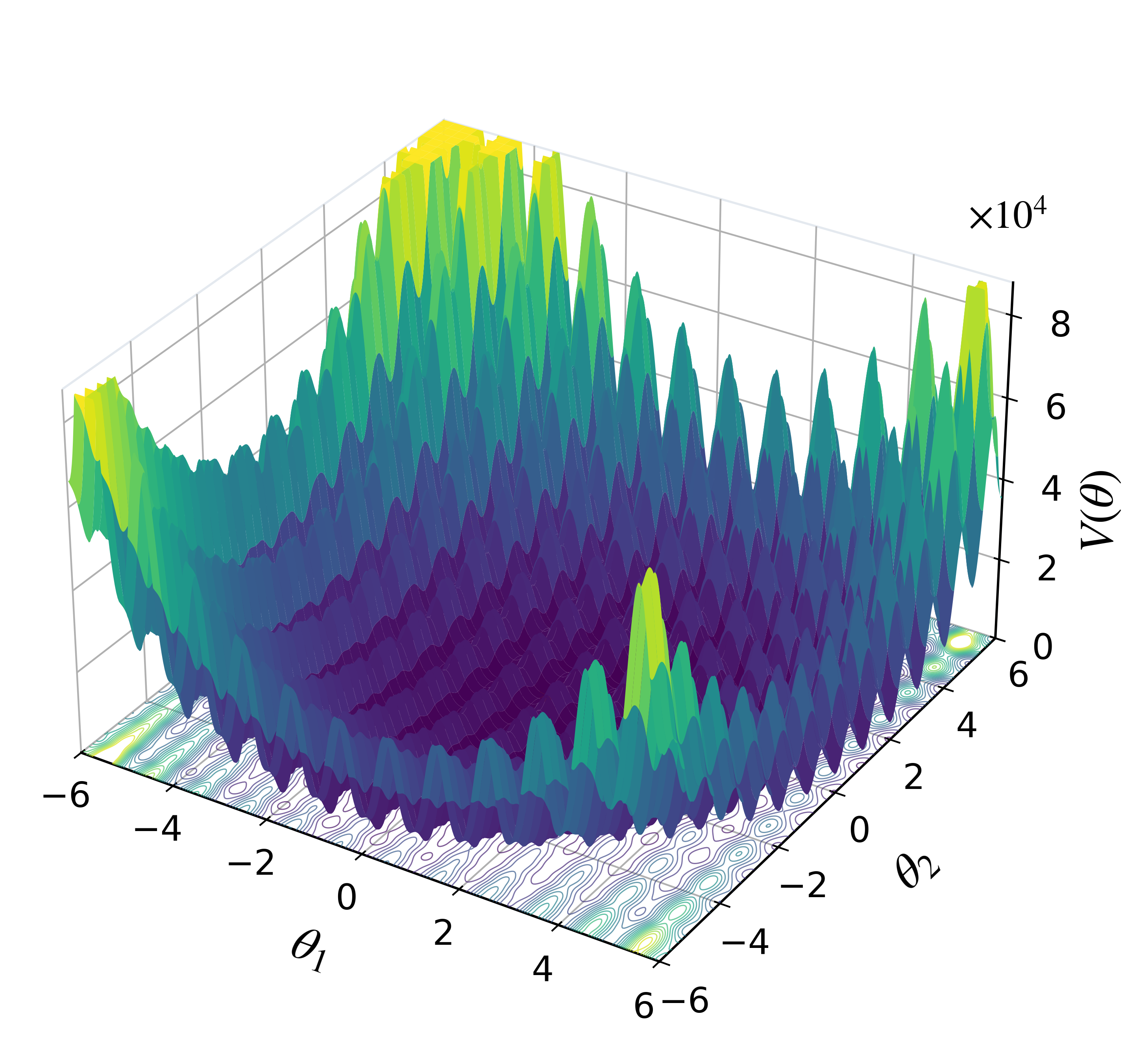}
        \caption{Potential landscape.}
        \label{fig:rastrigin_potential_landscape}
    \end{subfigure}
    \hfill
    \begin{subfigure}[t]{0.31\textwidth}
        \centering
        \includegraphics[width=\textwidth]{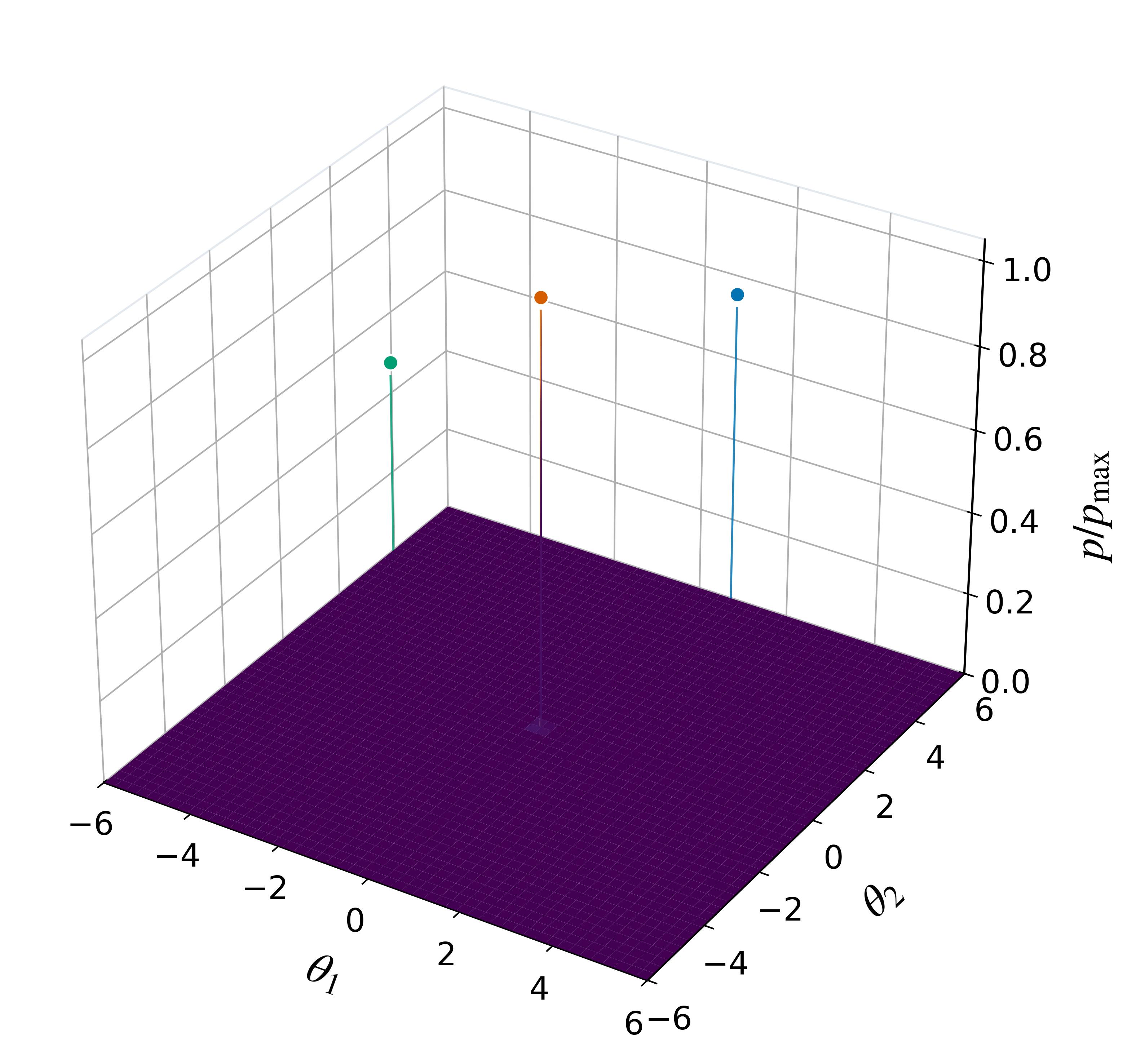}
        \caption{Reference posterior.}
        \label{fig:rastrigin_reference_posterior}
    \end{subfigure}
    \hfill
    \begin{subfigure}[t]{0.31\textwidth}
        \centering
        \includegraphics[width=\textwidth]{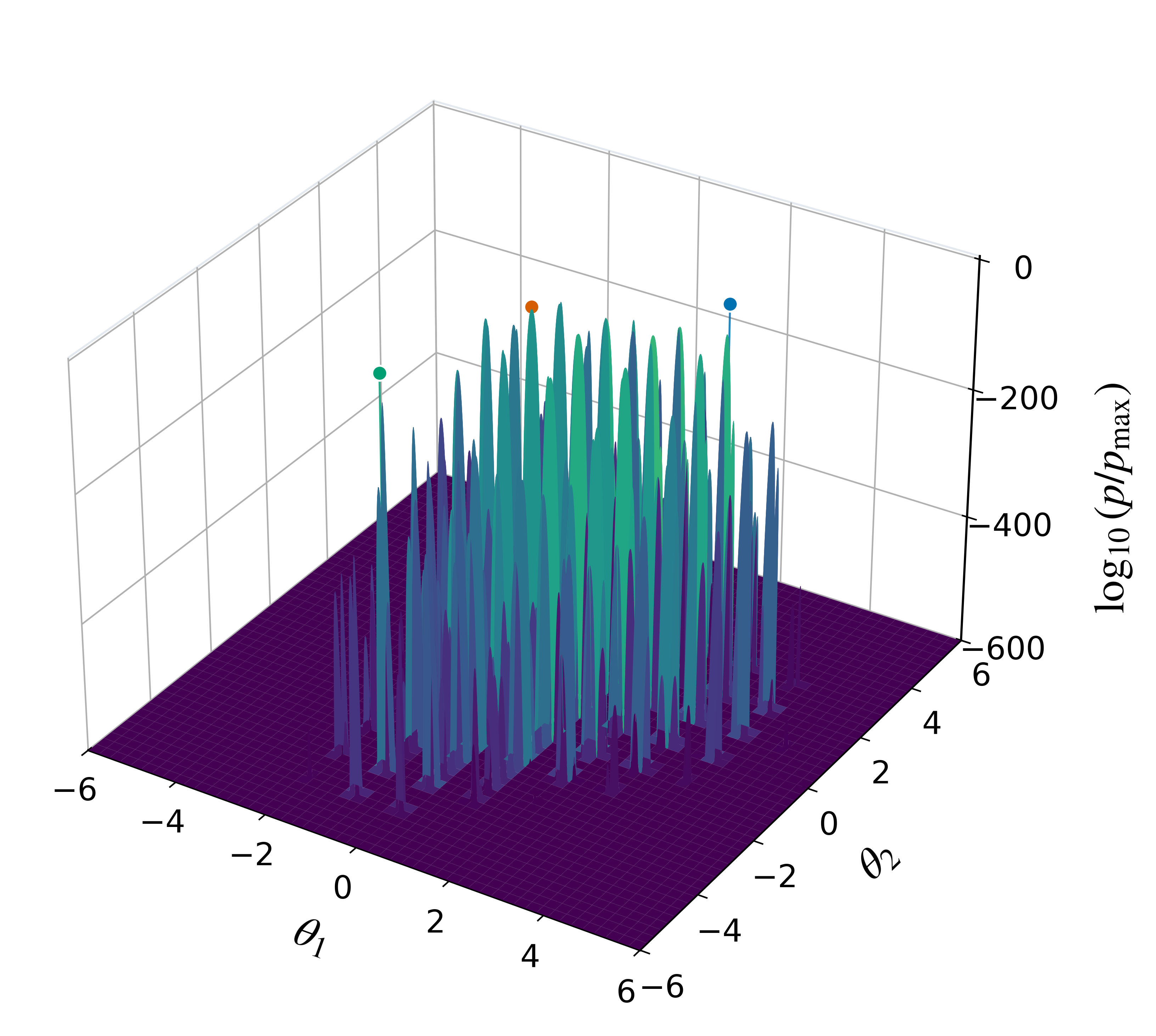}
        \caption{Log-posterior landscape.}
        \label{fig:rastrigin_reference_posterior_log}
    \end{subfigure}
    
    \vspace{0.5em}
    
    \begin{subfigure}[t]{0.32\textwidth}
        \centering
        \includegraphics[width=\textwidth]{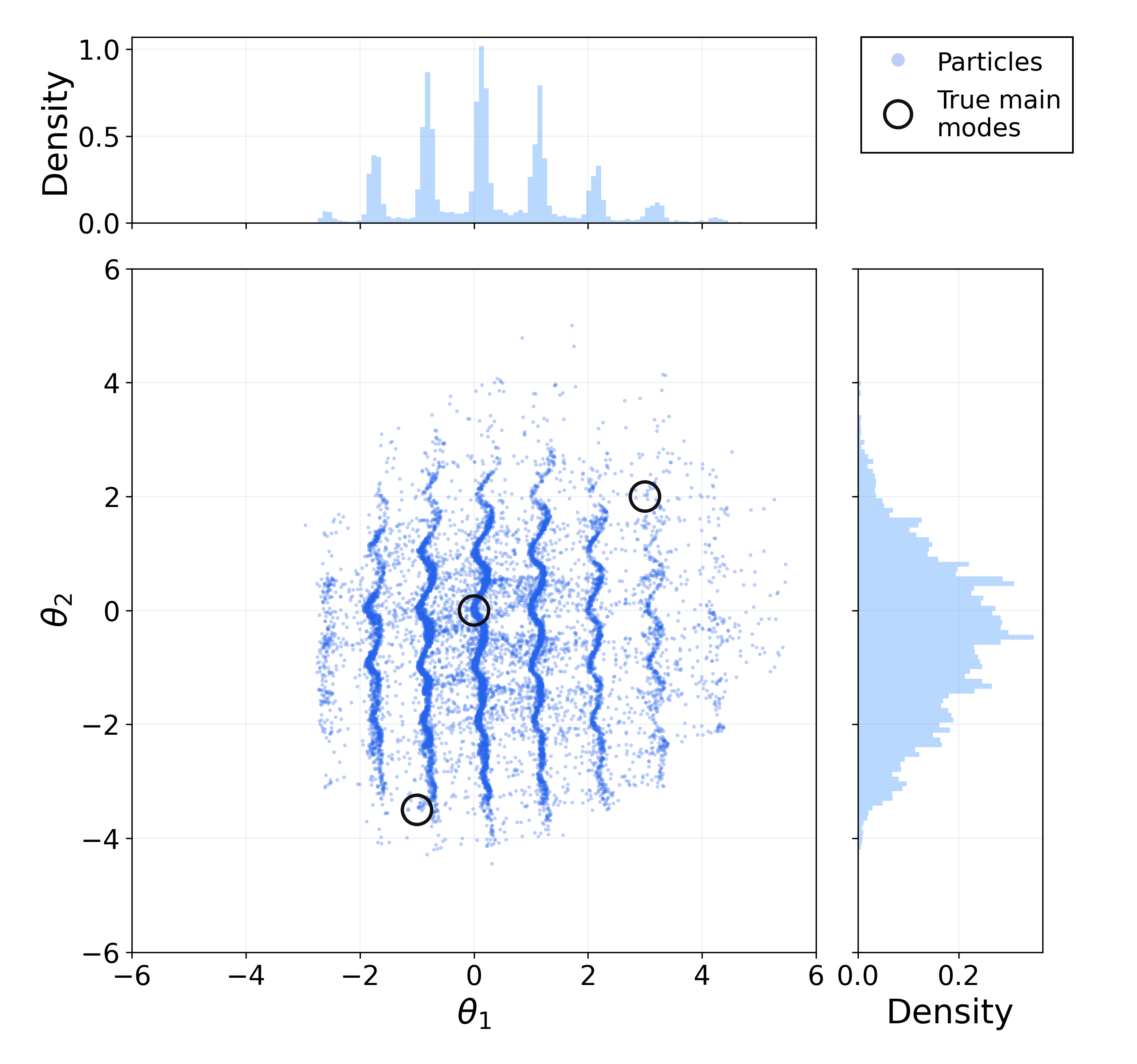}
        \caption{Standard EKI.}
        \label{fig:rastrigin_final_ensemble_standard_eki}
    \end{subfigure}
    \hfill
    \begin{subfigure}[t]{0.32\textwidth}
        \centering
        \includegraphics[width=\textwidth]{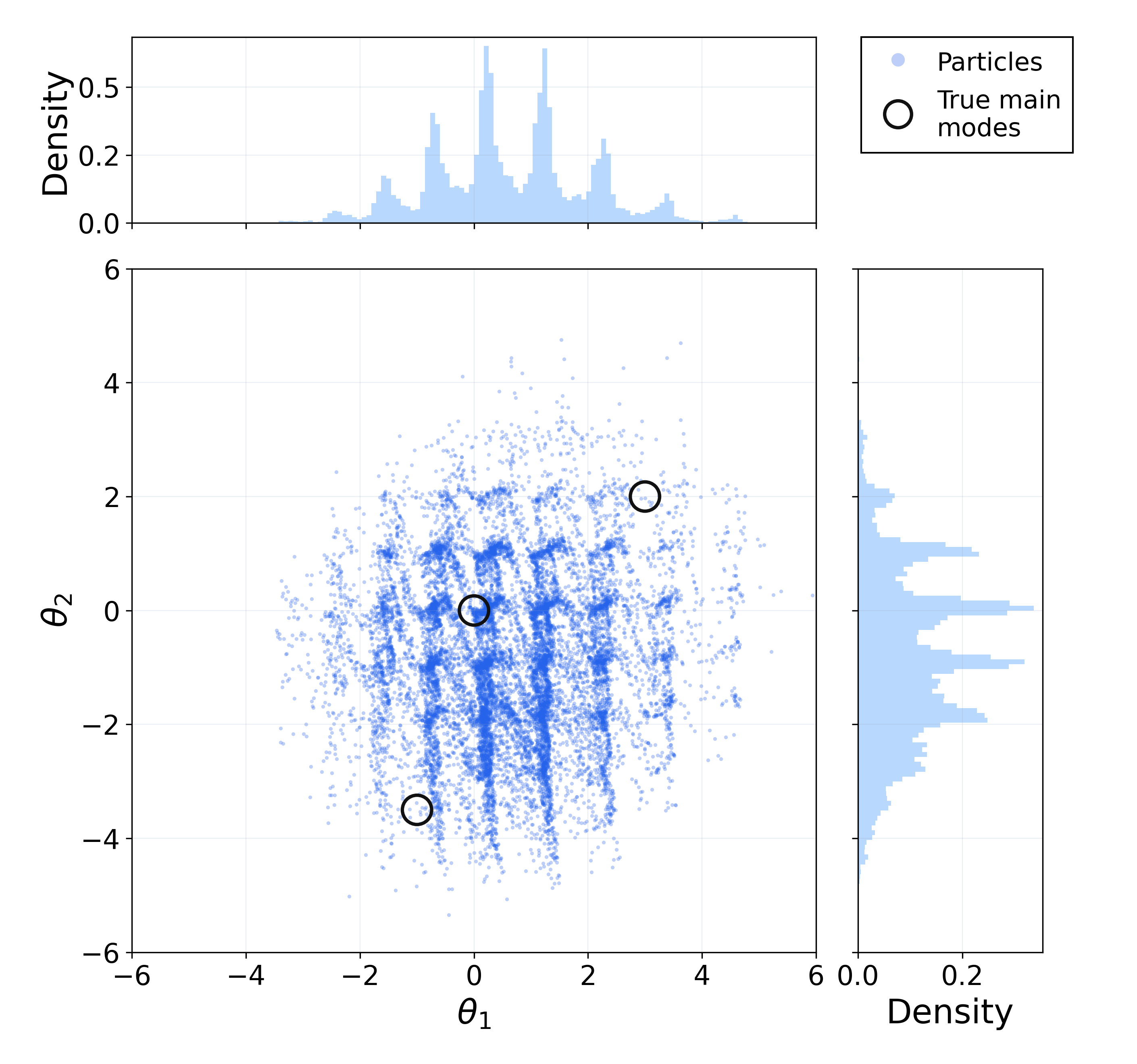}
        \caption{Standard EKI with inflation.}
        \label{fig:rastrigin_final_ensemble_standard_eki_inflation}
    \end{subfigure}
    \hfill
    \begin{subfigure}[t]{0.32\textwidth}
        \centering
        \includegraphics[width=\textwidth]{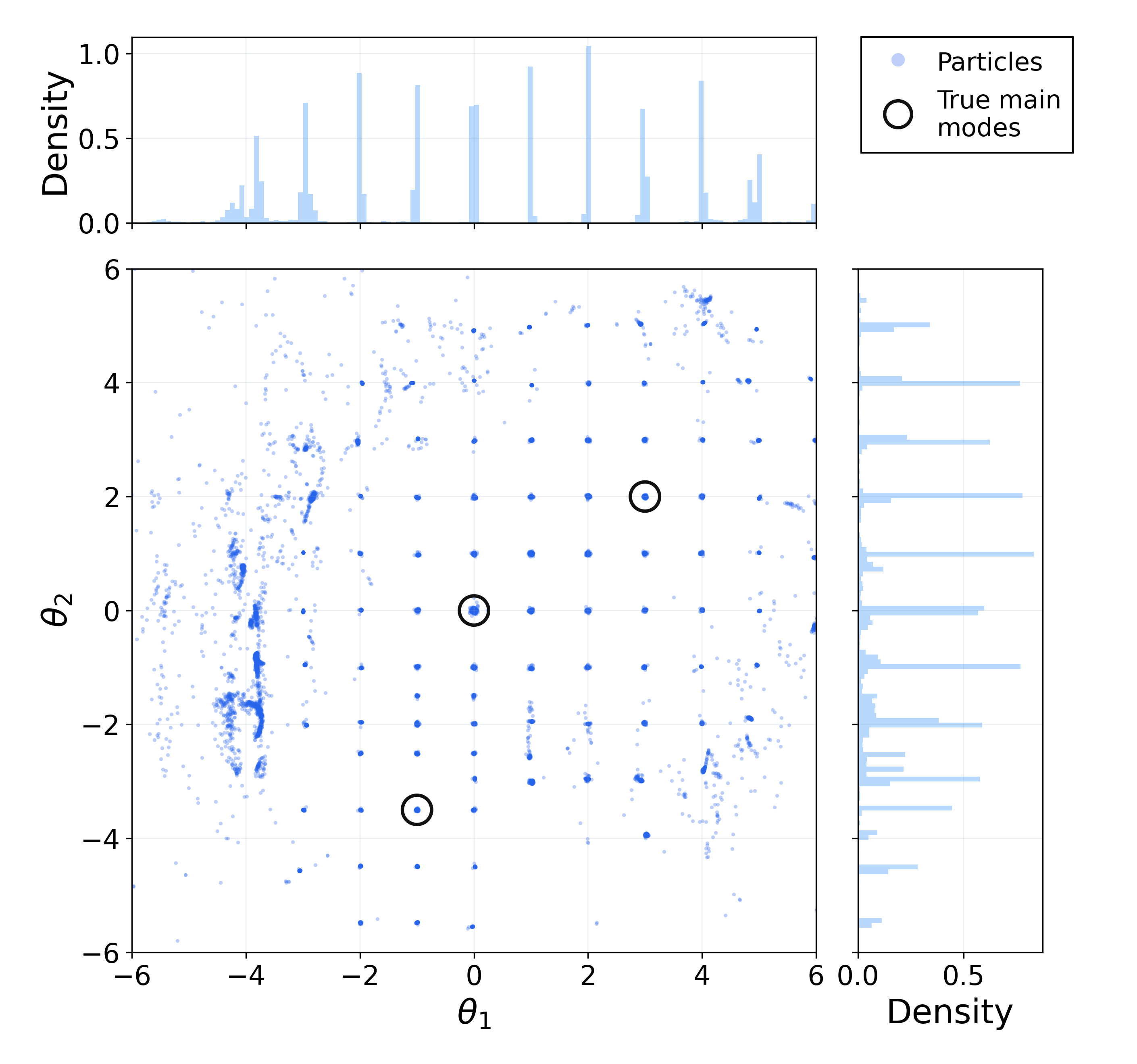}
        \caption{PEKI with broad prior.}
        \label{fig:rastrigin_final_ensemble_peki}
    \end{subfigure}
    
    \vspace{0.5em}
    
    \begin{subfigure}[t]{0.32\textwidth}
        \centering
        \includegraphics[width=\textwidth]{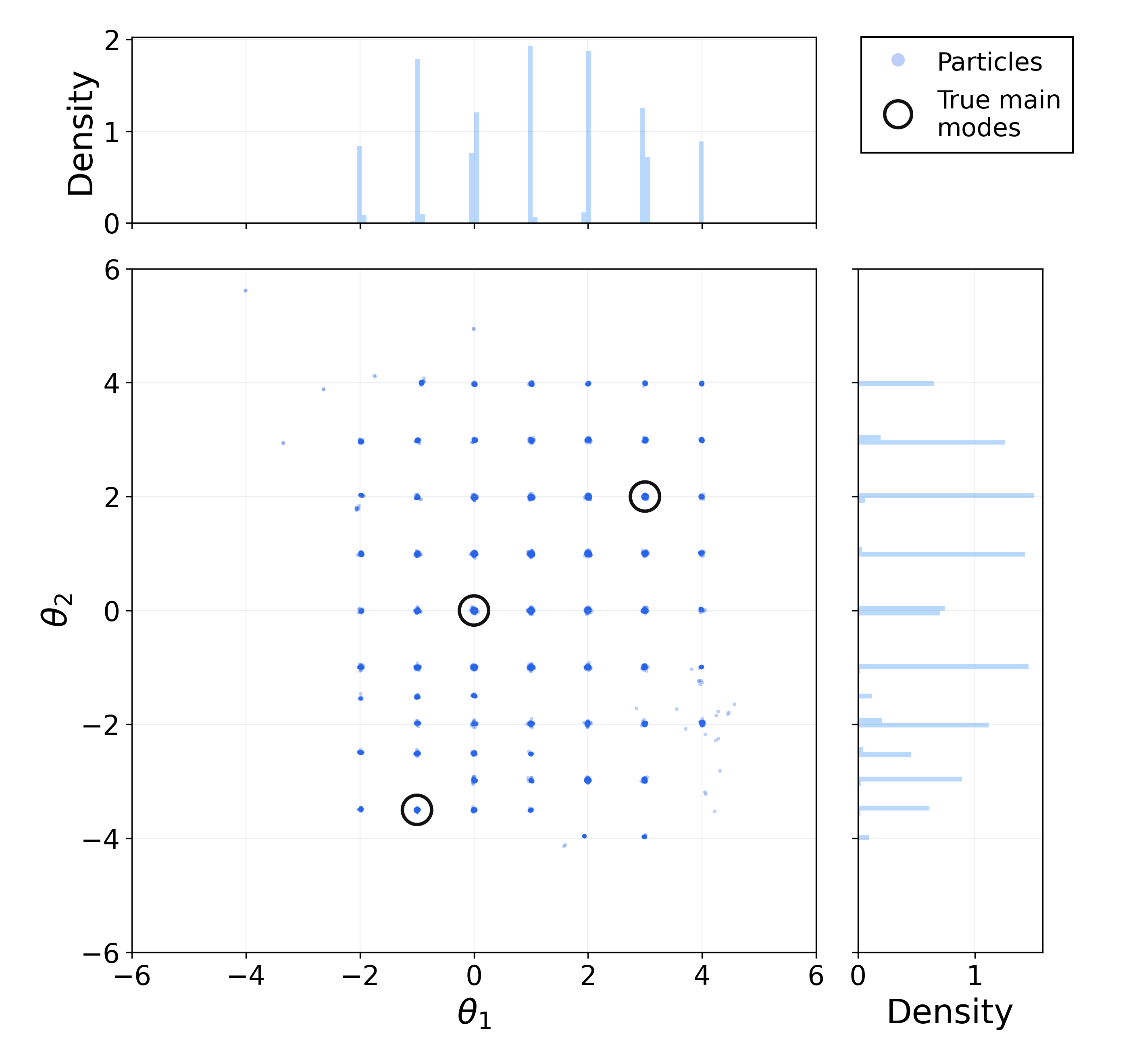}
        \caption{PEKI with compact prior.}
        \label{fig:rastrigin_final_ensemble_peki_small_prior}
    \end{subfigure}
    \hfill
    \begin{subfigure}[t]{0.32\textwidth}
        \centering
        \includegraphics[width=\textwidth]{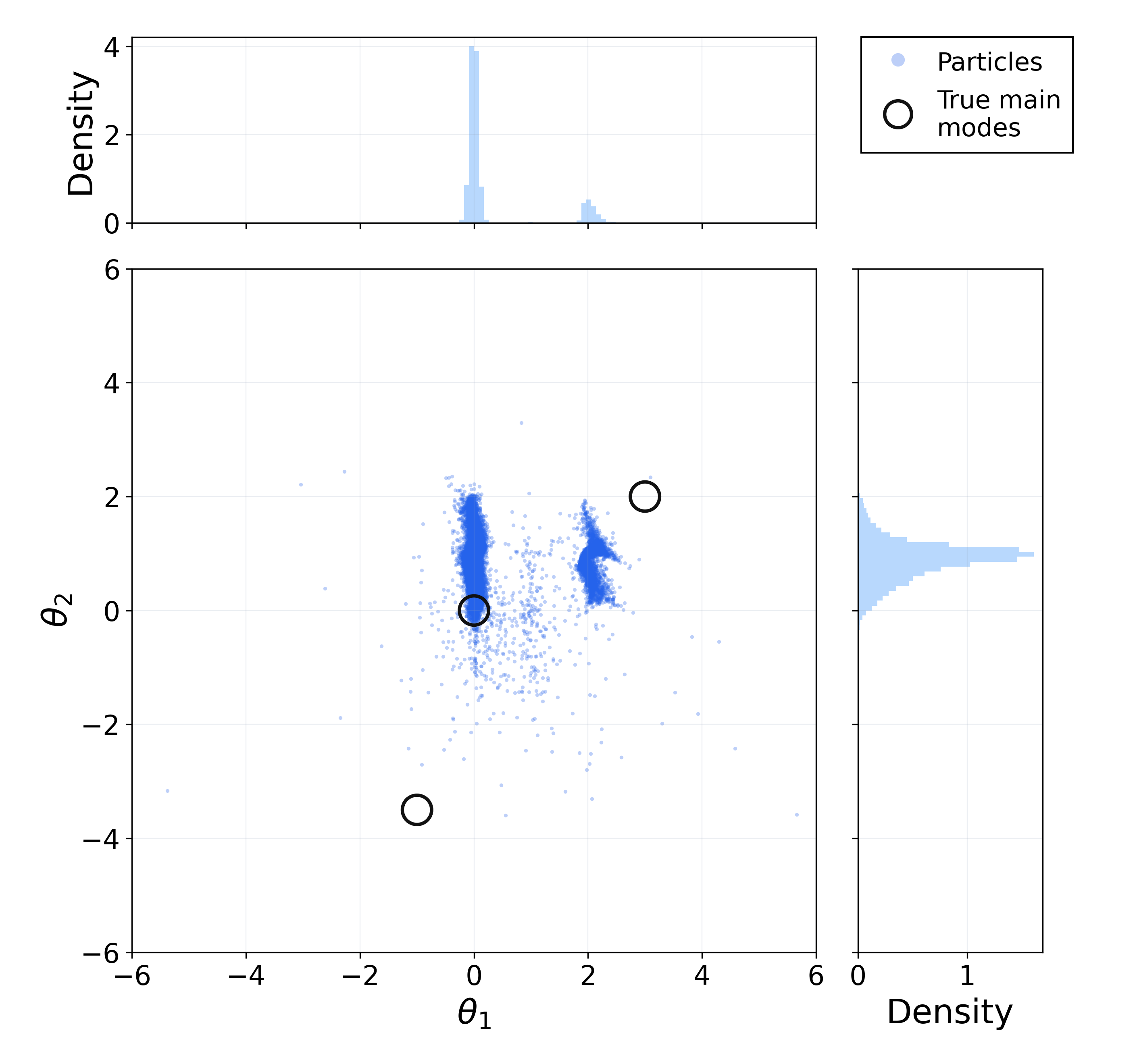}
        \caption{GMM-EKI with compact prior.}
        \label{fig:rastrigin_gmm_eki}
    \end{subfigure}
    \hfill
    \begin{subfigure}[t]{0.32\textwidth}
        \centering
        \includegraphics[width=\textwidth]{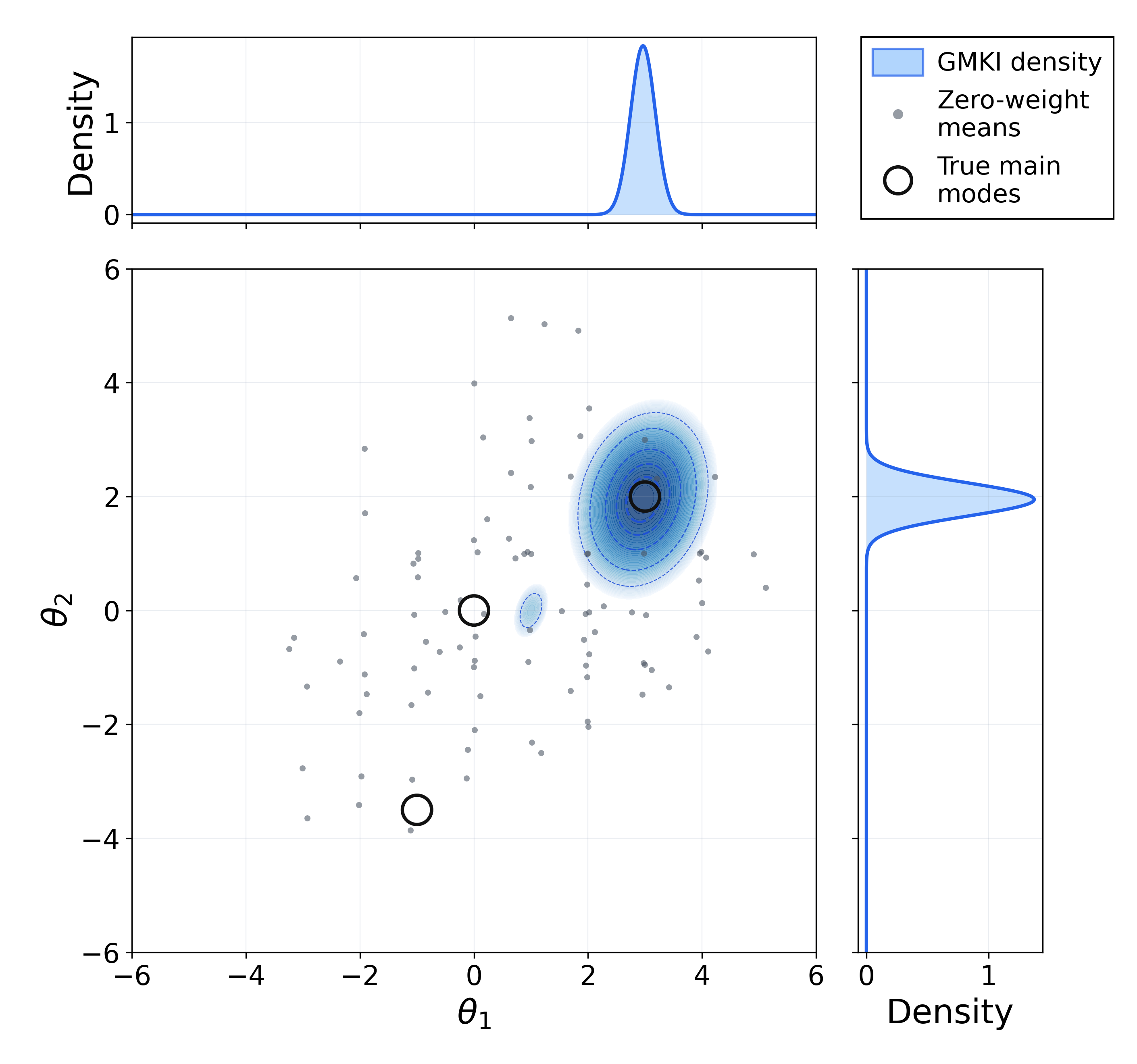}
        \caption{GMKI with compact prior.}
        \label{fig:rastrigin_gmki}
    \end{subfigure}
    
    \caption{Rastrigin inverse problem results. Panels (a), (b) and (c) show the potential landscape, the reference posterior density and its log-posterior landscape. Panels (d) and (e) show final ensembles from standard stochastic EKI, without and with covariance inflation. Panels (f) and (g) show final PEKI ensembles with broad and compact uniform priors. PEKI forms separated particle clusters near all modes. Panels (h) and (i) show the final results via GMM-EKI~\cite{dovera2011multimodal} and GMKI~\cite{chen2024efficient}.}
    \label{fig:rastrigin_results}
\end{figure}

We examine cross-cluster interactions in the Rastrigin example by fixing the ensemble and varying $\kappa$, and evaluate the cross-cluster leakage and the discrepancy between the full and cluster-restricted PEKI updates. Upper panels of Fig.~\ref{fig:rastrigin_propertyr} plot these two quantities, respectively, against $1/\kappa^2$ on logarithmic vertical scales. Both decrease rapidly as $\kappa$ becomes small, consistent with the exponential suppression predicted by Proposition~\ref{prop:cross_cluster_weight_suppression} and Corollary~\ref{cor:cluster_update_discrepancy}.



Unlike the Himmelblau example, however, the curves are not characterized by a single constant slope over the entire small-$\kappa$ range, reflecting two effects. On the one hand, the exponential bound contains an additional, faster-decaying term besides the leading term. On the other hand, the actual leakage and update discrepancy combine contributions from multiple out-of-cluster particles at different distances. At moderate $1/\kappa^2$, several out-of-cluster particles contribute, producing effective decay rates larger than the nearest-particle rate. As $\kappa$ decreases, the contributions associated with more distant particles, together with other faster-decaying corrections, become negligible. The nearest out-of-cluster particle then dominates, and the slopes approach $-(d_{i,r,n}^{\rm out})^2/2$, as shown by the tail reference lines. The curves therefore resolve the finite-$\kappa$ transition toward the leading distance-dependent exponential scale predicted by the non-asymptotic bound.

This transition is more visible in the Rastrigin example because its clusters are more closely spaced. Consequently, contributions from several other clusters remain non-negligible over a wider range of $\kappa$. In the more widely separated Himmelblau example, the leading exponential regime is reached at larger $\kappa$. At still smaller $\kappa$, the Rastrigin update discrepancy reaches numerical precision and develops the observed plateaus.

Finally, we examine the empirical convergence behavior. Starting from the uniform prior on $[-2,4]\times[-4,4]$, we examine convergence over 10 random seeds using the mean distance over all particles, rather than the cluster-wise MSE in Eq.~\eqref{eq:mse error}. The bottom-left panel of Fig.~\ref{fig:rastrigin_propertyr} shows a rapid initial decrease followed by a slower regime, so the raw distance is not a single geometric decay to zero. After excluding the first three iterations, the floor-corrected geometric model $d_k=d_\infty+Cq^k$ gives $R^2\approx0.9993$. The bottom-right log--log panel is nearly linear: fitting $d_k=C(k+1)^{-\alpha}$ over all iterations gives $\alpha\approx0.96$ and $R^2\approx0.9933$, while excluding the first three gives $\alpha\approx1.04$ and $R^2\approx0.9935$.

\begin{figure}[!htbp]
    \centering
    \includegraphics[width=0.7\linewidth]{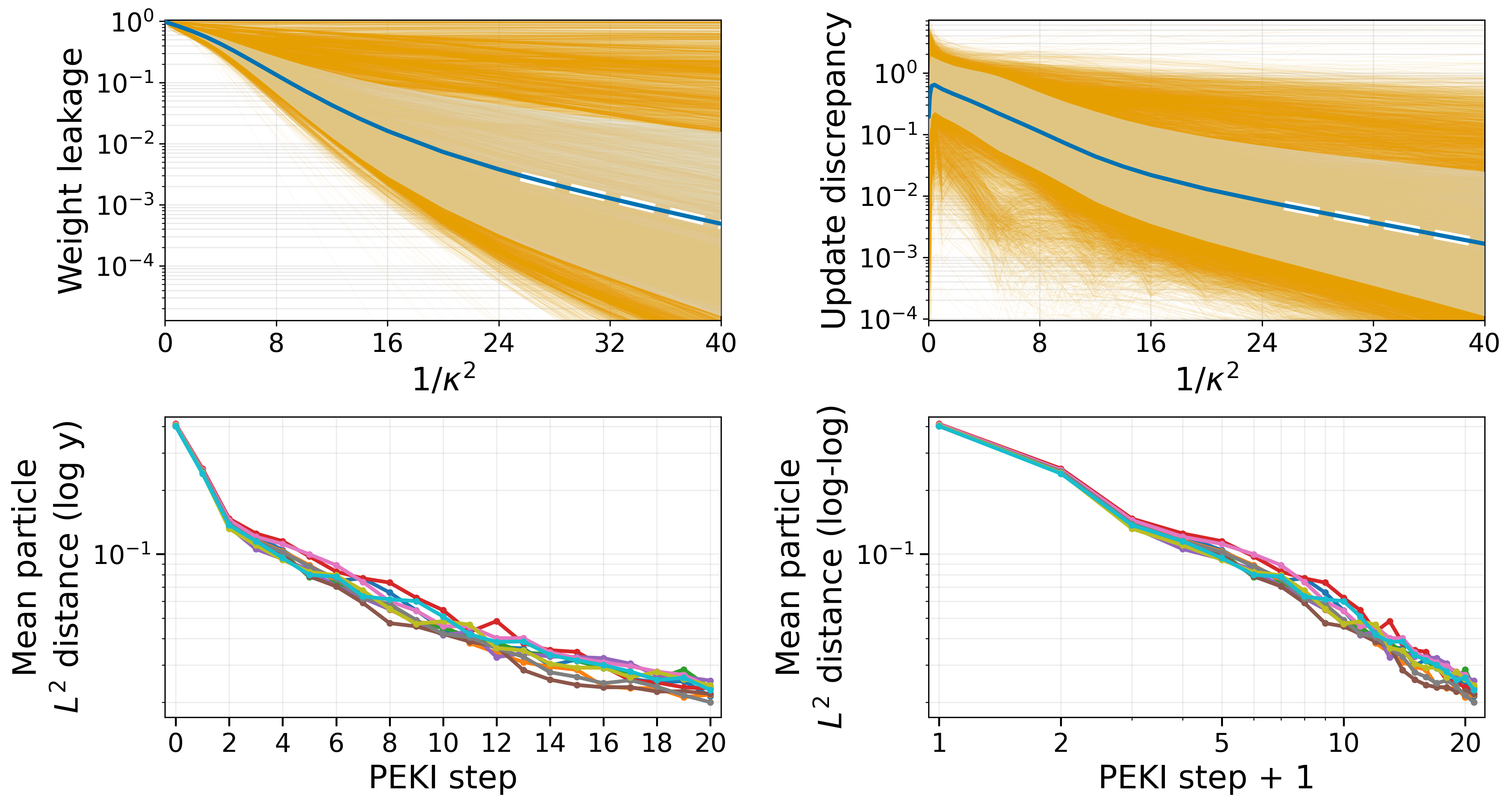}
    \caption{Cross-cluster interactions and empirical error decay in the Rastrigin example. The top-left and top-right subpanels show the cross-cluster weight leakage and update discrepancy, respectively, while the bottom-left and bottom-right subpanels show the convergence of ensemble members to their corresponding low data-misfit solutions on semi-log and log-log scales, respectively.}
    \label{fig:rastrigin_propertyr}
\end{figure}

\subsection{High-dimensional bimodal problem: Navier-Stokes problem}

Following~\cite{chen2024efficient, che2025stable}, we consider the PDE-constrained inverse problem of recovering the initial vorticity field $\omega_0$ from later-time measurements. The flow satisfies the two-dimensional Navier--Stokes equation on $\mathbb{T}^2=[0,2\pi]^2$ in vorticity--streamfunction form:
\[
\begin{aligned}
    &\partial_t \omega + u\cdot\nabla \omega
    = \nu \Delta \omega + \nabla \times f,\\
    &\omega = -\nabla \psi \quad \frac{1}{4\pi^2}\int \psi = 0 \quad u=\left[ \frac{\partial \psi}{\partial x_2},-\frac{\partial \psi}{\partial x_1}\right]^\top + u_b,
\end{aligned}
\]
where the velocity $u$ is solved from the streamfunction $\psi$ with a non-zero mean background $u_b=[0,2\pi]^\top$. $f(x)=[0,\cos(4x_1)]^\top$ is the external forcing.

The spatial symmetry about $x_1=\pi$ is utilized to construct a bimodal problem, observing pointwise vorticity differences:
\[
    y=\omega\left(\left[x_1,x_2\right]^\top\right)-\omega\left(\left[2\pi-x_1,x_2\right]^\top\right) + \eta, \quad \eta \sim\mathcal{N}(0,0.1^2),
\]
at 56 equidistant points, at $T=0.25$ and $T=0.5$, with observation noise. Under this setup, both $\omega_0([x_1,x_2]^\top)$ and $-\omega_0([2\pi-x_1,x_2]^\top)$ will lead to the same observations, resulting in at least two data-consistent solutions.

Rather than the field itself, the unknown parameter is the leading KL/Fourier coefficients of $\omega_0(\theta)$:
\[
    \omega_0([x_1,x_2],\theta)=\sum_{l\in \mathbb{L}} \theta_l^c \sqrt{\lambda_l} \psi_l^c([x_1,x_2])+\theta_l^s \sqrt{\lambda_l} \psi_l^s([x_1,x_2]),
\]
where $\mathbb{L}={(l_x,l_y)|l_x+l_y>0 ~\text{or}~(l_x+l_y=0~\text{and}~l_x>0)}$, and the eigenpairs
\[
    \psi^c_l([x_1,x_2])=\frac{\cos (l \cdot [x_1,x_2])}{\sqrt{2}\pi}, \quad \psi^s_l([x_1,x_2])=\frac{\sin (l \cdot [x_1,x_2])}{\sqrt{2}\pi}, \quad \lambda_l=\frac{1}{|l|^4}.
\]

For computational convenience, the KL expansion is rearranged as a sum over positive integers rather than a lattice:
\[
    \omega_0([x_1,x_2],\theta)=\sum_{l=1}^\infty \theta_l \sqrt{\lambda_l} \psi_l([x_1,x_2]),
\]
where the eigenvalues $\lambda_l$ are in descending order. We truncate the expansion to the first $K$ terms and generate the true vorticity field $\omega_0([x_1,x_2],\theta_\text{ref})$ with $\theta_\text{ref}\in \mathbb{R}^K$. Hence, the parameter in this example will be the coefficients $\theta_\text{ref}$. And there exists a $\theta_\text{ref}^m \neq \theta_\text{ref}$ resulting in a mirrored field and the same observations. The settings so far are exactly the same as \cite{chen2024efficient, che2025stable}. In this work, we consider three truncation dimensions, \(K=10,50,128\), and use different prior families to demonstrate that PEKI is not tied to a particular parametric form of the prior, in contrast to the Gaussian-prior formulations of GMKI \cite{chen2024efficient} and DF-GMVI \cite{che2025stable}. Specifically, we consider a Gaussian prior, a uniform prior, and a two-component Gaussian mixture, as summarized in Table~\ref{tab: NS setup}.


\begin{table}[!htbp]
    \centering
    {\small
    \setlength{\tabcolsep}{4pt}
    \begin{tabular}{c|c|>{\raggedright\arraybackslash}p{0.47\textwidth}|c|c}
    \hline
        Case & $K$ & Prior & \begin{tabular}[c]{@{}c@{}}Ensemble\\size $J$\end{tabular} & \begin{tabular}[c]{@{}c@{}}Polarization\\radius $\kappa$\end{tabular} \\
        \hline
        1 & 10 & Gaussian\newline $\theta\sim\mathcal{N}(0,2\pi^2 I)$ & 20480 & 0.7 \\
        2 & 50 & Uniform\newline $\theta_i\sim\mathcal{U}(c_i-\ell_i/2,c_i+\ell_i/2)$, $i=1,\ldots,K$  & 15360 & 1.5 \\
        3 & 128 & Gaussian mixture\newline $\frac{1}{2}\mathcal{N}(\theta_\text{ref}-0.2,0.5^2 I)+\frac{1}{2}\mathcal{N}(\theta_\text{ref}^m+0.2,0.5^2 I)$ & 5120 & 3 \\
        \hline
    \end{tabular}
    \par\vspace{0.25em}
    \footnotesize
    For Case 2,
    $c_i=(\theta_{\text{ref},i}+\theta_{\text{ref},i}^m)/2$, and
    $\ell_i=\max\{2|\theta_{\text{ref},i}-\theta_{\text{ref},i}^m|,1.5\sqrt{2}\pi\}$.
    \par}
    \caption{Prior distributions and numerical settings for the three NS cases}
    \label{tab: NS setup}
\end{table}

Figure~\ref{fig:NS_results} summarizes the final ensemble marginals and reconstructed field of PEKI in the three NS cases. The left column compares the final ensemble marginals of the displayed leading KL coefficients with the corresponding reference values. Across the three cases, PEKI identifies the true value in most parameter dimensions. In the 50-dimensional case, bimodality is emerging but not fully separated for the 10th coefficient and is not clearly recovered for the 11th and 15th coefficients; the 15th coefficient is also unresolved in the 128-dimensional case. The unresolved dimensions are associated with closely spaced modal values or weak identifiability from the observations.

The right column further compares the recovered modes in the vorticity field. As $K$ increases, the truncated KL expansion retains more terms, and the reference fields exhibit increasingly complex spatial structures, while both reconstructed fields remain visually close to their corresponding reference fields. The accompanying error histories show the relative \(L^2\) field errors of the truth-like and mirror-like modes over the PEKI iterations. In all three cases, both errors decrease rapidly during the first few iterations and then stabilize. After 10 iterations, the two fields have errors of approximately \(5\%\) for \(K=10\), \(10\%\) for \(K=50\), and \(5\%\) for \(K=128\). These errors are lower than the roughly \(20\%\) reported in~\cite{chen2024efficient, che2025stable}, although a direct performance comparison is not intended because the prior settings differ across the methods.

Notably, successful recovery is obtained with ensemble sizes decreasing from \(J=20480\) at \(K=10\) to \(J=5120\) at \(K=128\), despite the increase in parameter dimension. This demonstrates that the ensemble size requirement depends strongly on how the prior covers the relevant solution regions, rather than on parameter dimension alone. We do not seek to determine the minimum ensemble size for each case.

\begin{figure}[!htbp]
    \centering
    \begin{subfigure}[t]{0.48\textwidth}
        \centering
        \includegraphics[width=0.95\textwidth]{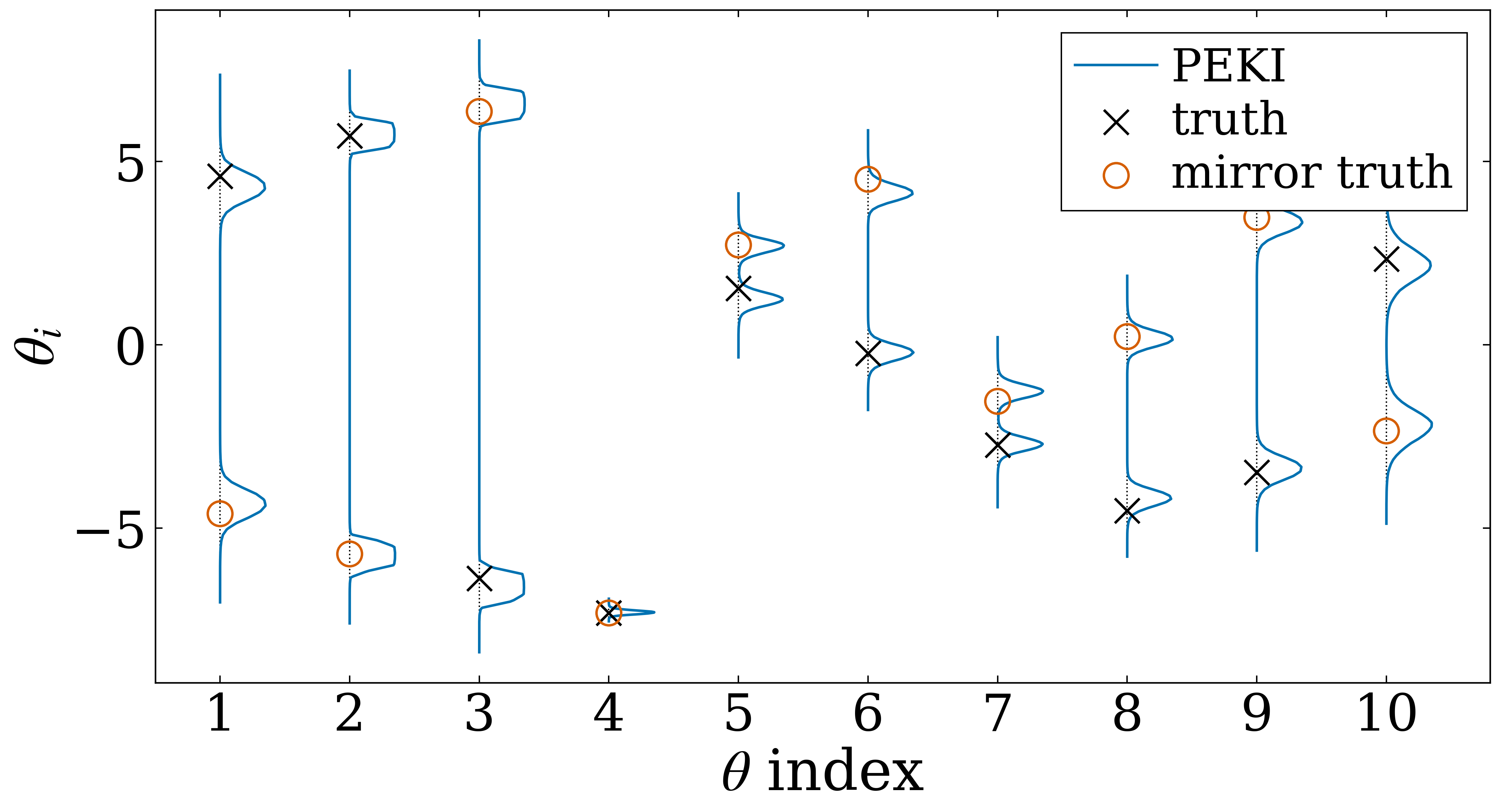}
        \caption{Marginal distribution for 10-dimensional.}
        \label{fig: NS d10 marignal}
    \end{subfigure}
    \hfill
    \begin{subfigure}[t]{0.48\textwidth}
        \centering
        \includegraphics[width=\textwidth]{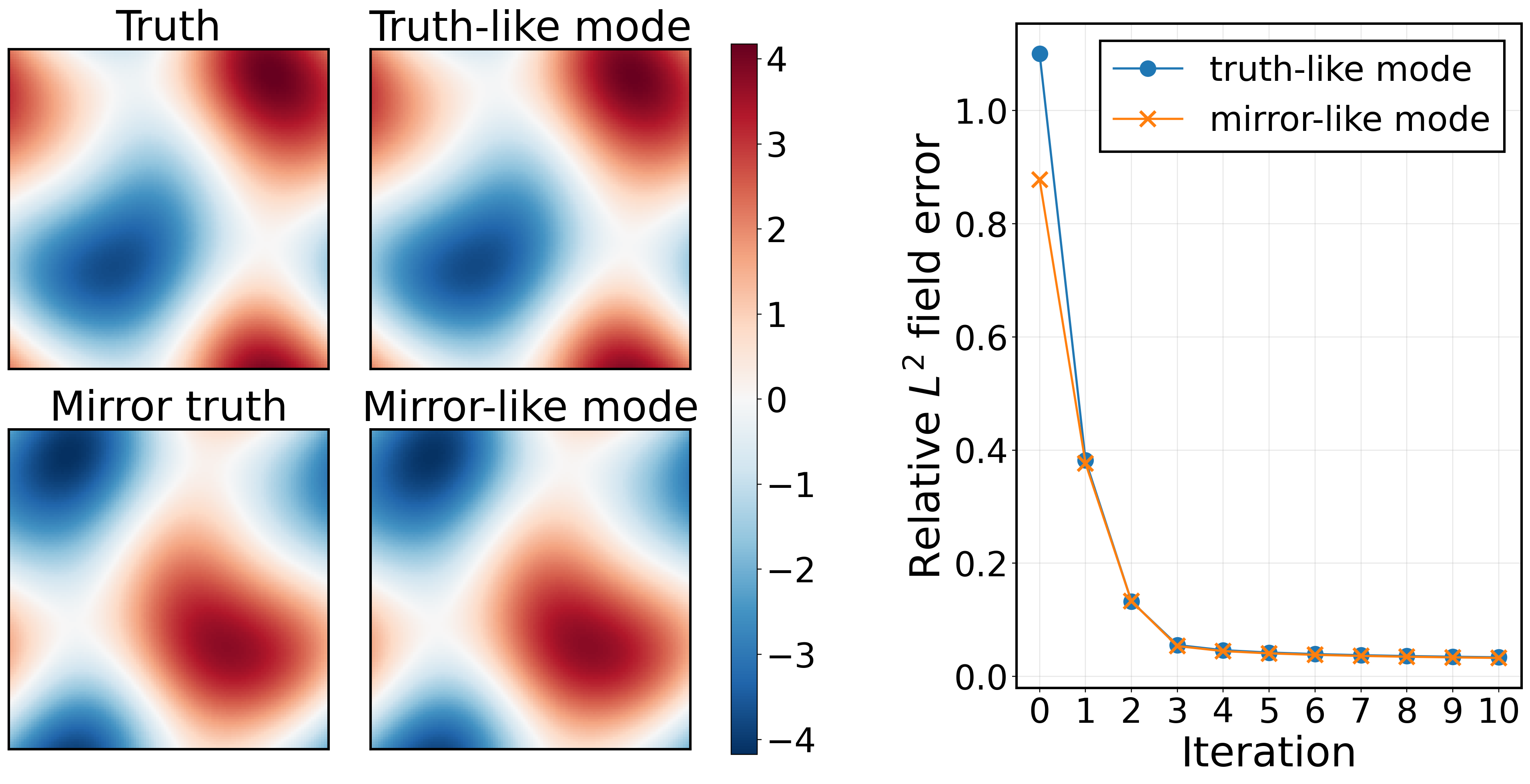}
        \caption{Reconstructed field for 10-dimensional.}
        \label{fig: NS d10 field}
    \end{subfigure}
    
    \vspace{0.5em}
    
    \begin{subfigure}[t]{0.48\textwidth}
        \centering
        \includegraphics[width=0.95\textwidth]{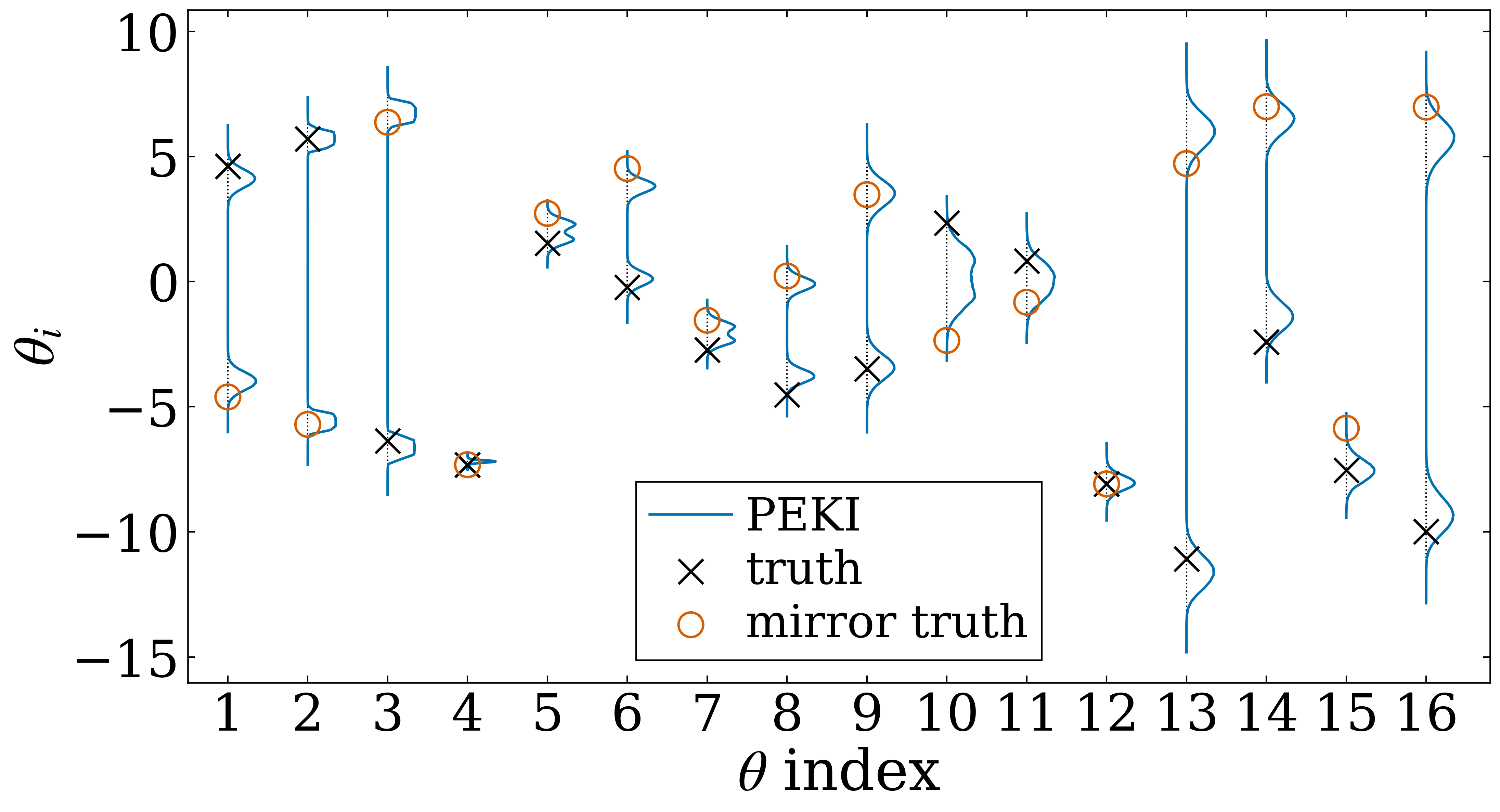}
        \caption{Marginal distribution for 50-dimensional.}
        \label{fig: NS d50 marignal}
    \end{subfigure}
    \hfill
    \begin{subfigure}[t]{0.48\textwidth}
        \centering
        \includegraphics[width=\textwidth]{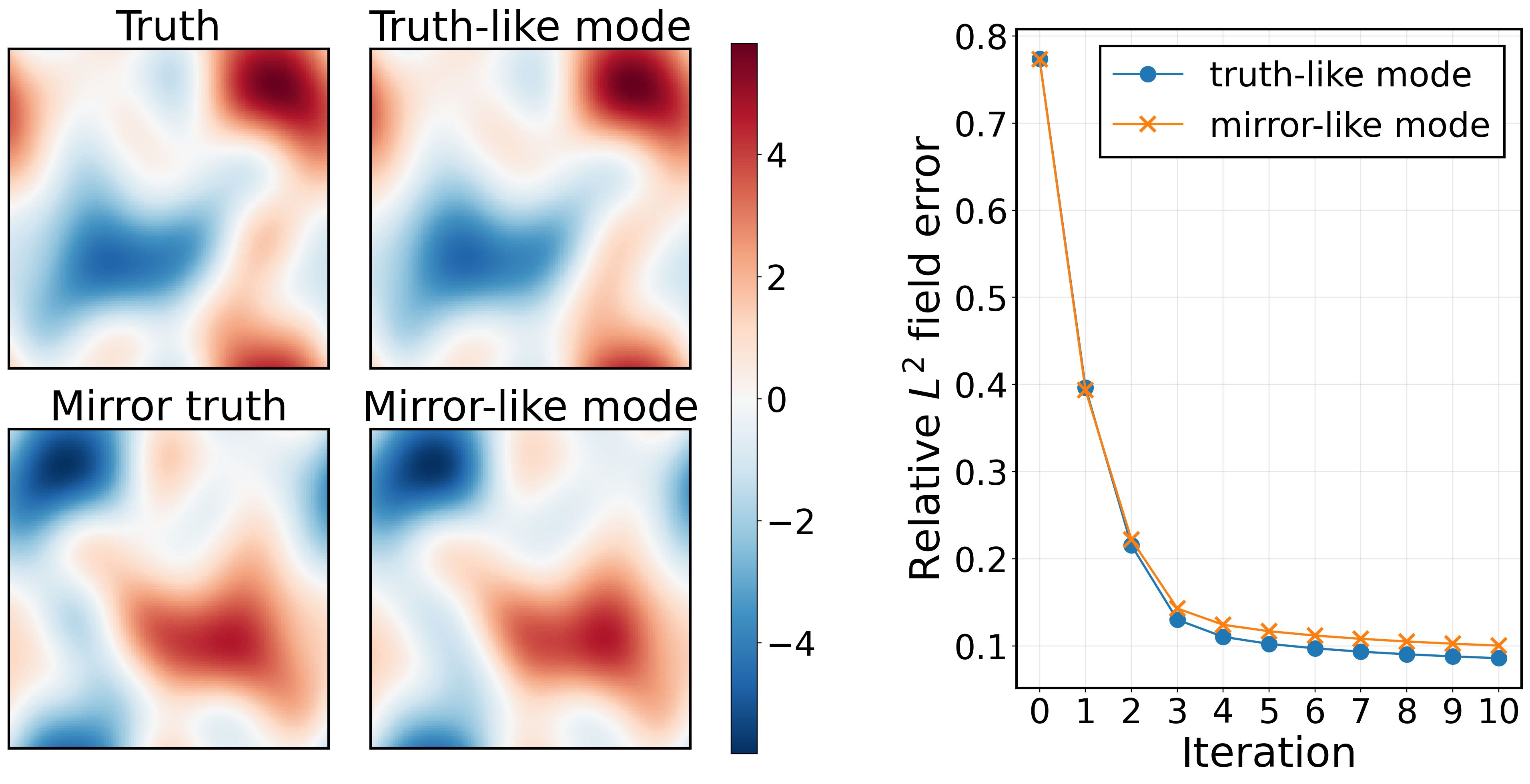}
        \caption{Reconstructed field for 50-dimensional.}
        \label{fig: NS d50 field}
    \end{subfigure}

    \vspace{0.5em}
    
    \begin{subfigure}[t]{0.48\textwidth}
        \centering
        \includegraphics[width=0.95\textwidth]{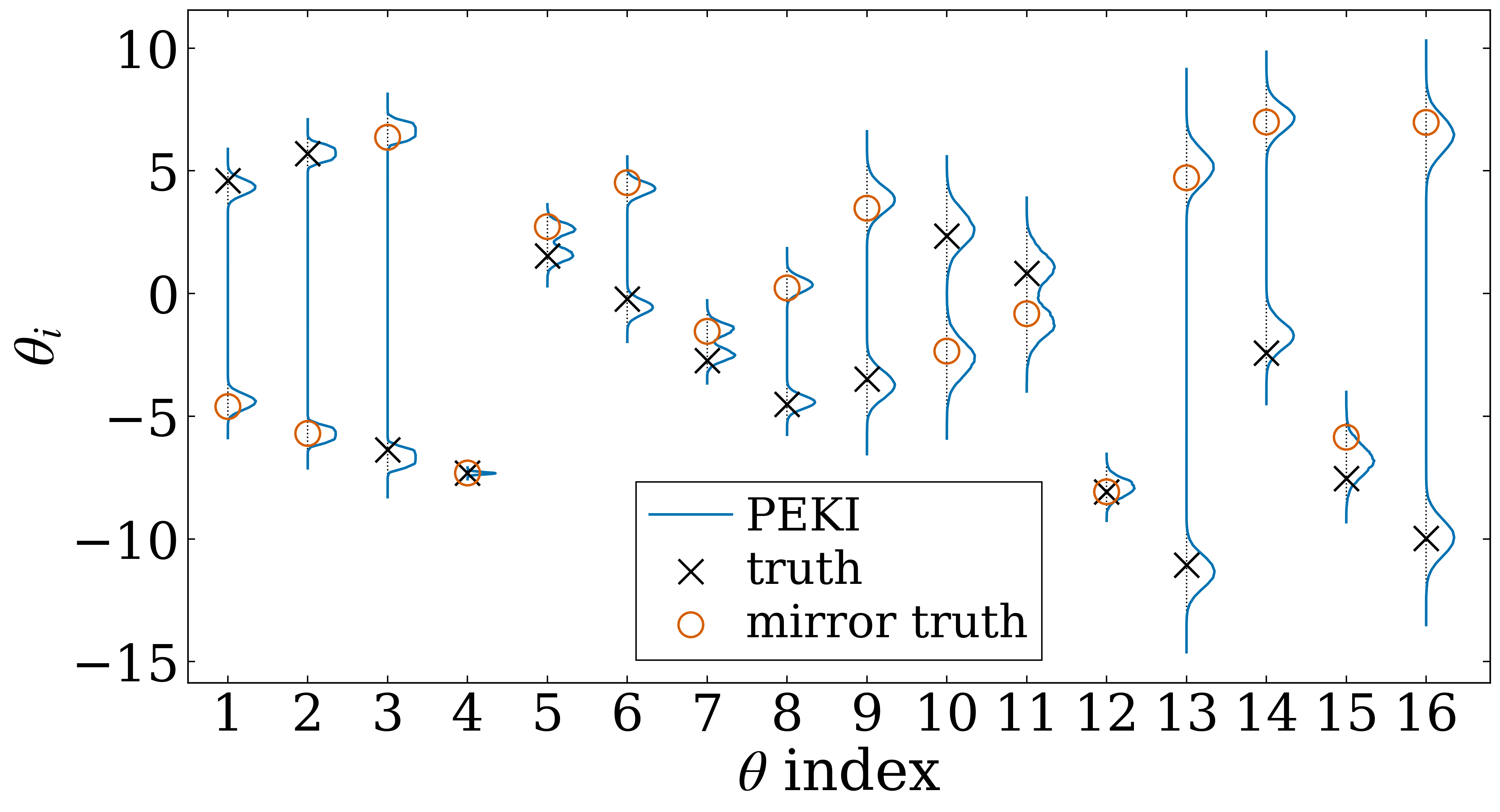}
        \caption{Marginal distribution for 128-dimensional.}
        \label{fig: NS d128 marignal}
    \end{subfigure}
    \hfill
    \begin{subfigure}[t]{0.48\textwidth}
        \centering
        \includegraphics[width=\textwidth]{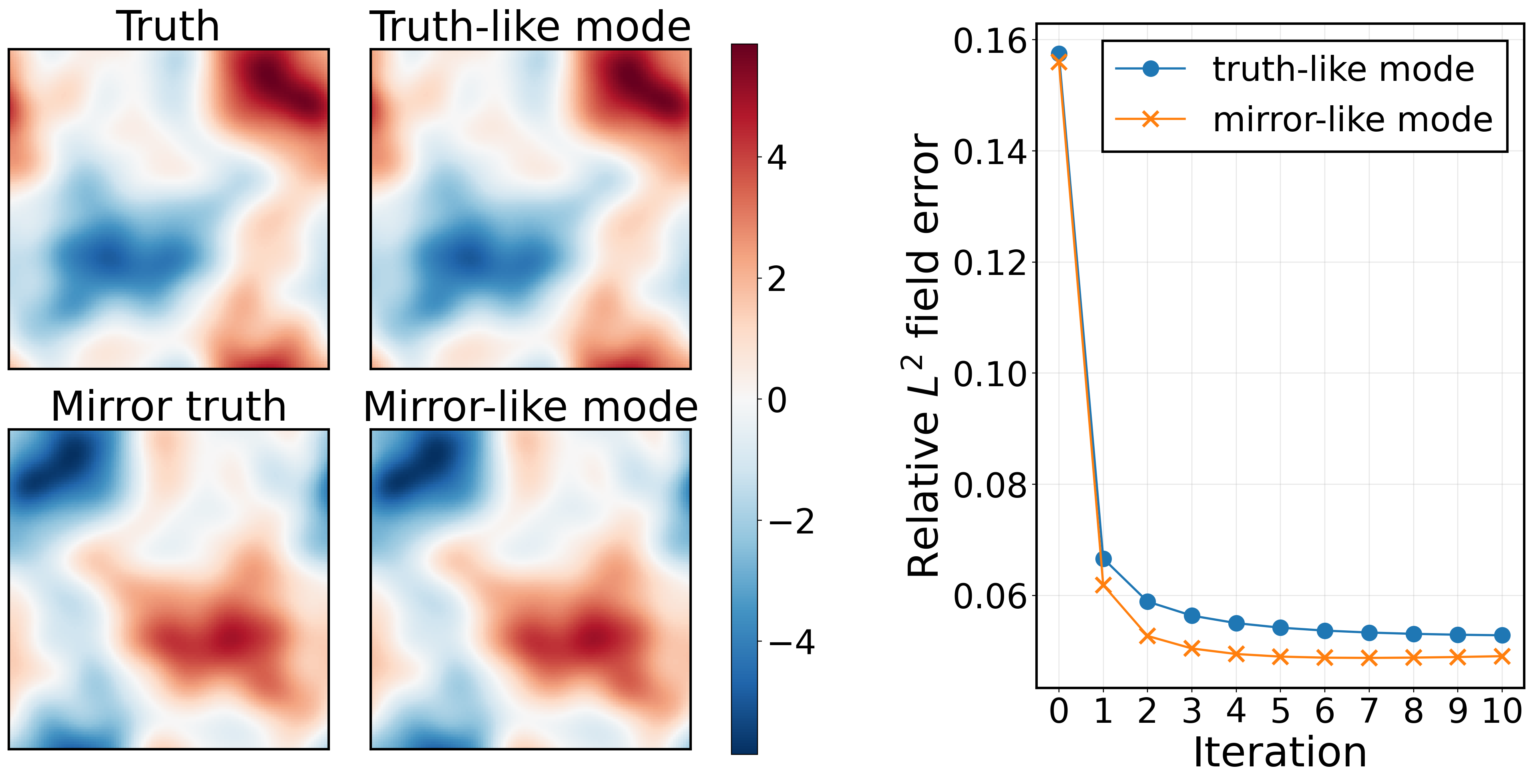}
        \caption{Reconstructed field for 128-dimensional.}
        \label{fig: NS d128 field}
    \end{subfigure}
    
    \caption{PEKI results for the Navier--Stokes inverse problem with \(K=10\) (top), \(K=50\) (middle), and \(K=128\) (bottom). The left column shows the marginals of the displayed KL coefficients and the corresponding truth and mirror values. In each right-hand panel, the \(2\times2\) plots show the reference truth and mirror vorticity fields and their PEKI reconstructions, while the adjacent plot shows the relative \(L^2\) errors of the truth-like and mirror-like reconstructed fields over the PEKI iterations.}
    \label{fig:NS_results}
\end{figure}



\section{Conclusions}
\label{Conclusion}

We developed polarized ensemble Kalman inversion (PEKI) for inverse problems with multiple data-consistent solutions. By replacing the global empirical covariances of standard EKI with particle-dependent, distance-weighted, finite-sample-corrected covariances, PEKI allows different particle clusters to evolve toward distinct solutions while preserving the derivative-free and naturally parallel structure of EKI.

Our finite-ensemble, discrete-time analysis shows that the influence of distant clusters on particle covariances, Kalman gains, and updates is exponentially suppressed as their separation increases relative to the polarization radius. For linear forward maps, PEKI also retains particle-wise residual non-expansion, and the cluster-mean update admits an exact decomposition into a cluster-EKI update, a cross-cluster correction, a within-cluster correction, and an observation-perturbation term. The analysis highlights the main trade-off in polarization-radius selection: a smaller polarization radius can suppress cross-cluster interactions, whereas an excessively small radius may leave too few effective neighbors for stable local covariance estimation.

Based on this analysis, we propose a practical polarization-radius selection procedure whose one-step scan requires no additional forward-model evaluations. The low-dimensional examples demonstrate the practical selection of the radius, provide empirical support for the predicted cross-cluster suppression, and show that the leading distance-dependent exponential scale in the non-asymptotic bound is sharp at the level of the observed update discrepancies. The Navier--Stokes example further demonstrates that PEKI can be applied to high-dimensional PDE-constrained inverse problems while maintaining multiple data-consistent solutions.

Overall, PEKI provides a practical extension of EKI for multimodal inverse problems. It is designed to identify multiple data-consistent solutions rather than recover their relative posterior masses or the full posterior distribution. Future work may therefore consider adaptive or iteration-dependent radii and extensions for recovering posterior information.


\appendix

\section{Proof for suppression of cross-cluster interactions}
\label{app:cluster_decoupling_bounds}
For conciseness, we provide only a brief proof here; a complete and detailed proof is given in the supplementary material, \cref{app:detailed_cluster_decoupling_bounds}.

\begin{lemma}[Pairwise form of the local weighted covariance]
\label{lem:pairwise_weighted_covariance}
For normalized weights \(w_j\geq0\), \(\sum_jw_j=1\), and weighted means \(\bar x=\sum_jw_jx^{(j)}\), \(\bar z=\sum_jw_jz^{(j)}\), the cross-covariance matrix can be rewritten as,
\[
    C^{xz}=\frac{\sum_{j=1}^Jw_j
    (x^{(j)}-\bar x)(z^{(j)}-\bar z)^\top}
    {1-\sum_{j=1}^Jw_j^2}
    =
    \frac{\sum_{1\leq p<q\leq J}w_pw_q
    (x^{(p)}-x^{(q)})(z^{(p)}-z^{(q)})^\top}
    {2\sum_{1\leq p<q\leq J}w_pw_q}.
    \label{eq:pairwise_weighted_covariance_main}
\]
\end{lemma}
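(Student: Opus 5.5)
The plan is to prove the numerator and denominator identities separately and then divide. Both reduce to the classical fact that a weighted second moment about the weighted mean equals a weighted sum over pairs.

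For the numerator, expand the double sum over ordered pairs
\[
    \sum_{p=1}^J\sum_{q=1}^J w_pw_q\,(x^{(p)}-x^{(q)})(z^{(p)}-z^{(q)})^\top .
\]
Multiplying out gives four terms. The two diagonal-type terms are
\[
    \sum_p w_p x^{(p)}z^{(p)\top}\sum_q w_q
    \quad\text{and}\quad
    \sum_q w_q x^{(q)}z^{(q)\top}\sum_p w_p .
\]
Using \(\sum_jw_j=1\), each equals \(\sum_jw_jx^{(j)}z^{(j)\top}\). The two cross terms are each equal to \(-\bar x\bar z^\top\). Hence the double sum equals
\[
    2\Bigl(\sum_jw_jx^{(j)}z^{(j)\top}-\bar x\bar z^\top\Bigr)
    =2\sum_jw_j(x^{(j)}-\bar x)(z^{(j)}-\bar z)^\top .
\]
The last equality is the standard centering identity, which again uses \(\sum_j w_j=1\).

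Next, the diagonal \(p=q\) terms vanish, and the summand is symmetric under swapping \(p\) and \(q\): both factors change sign, so their outer product is unchanged. The ordered double sum therefore equals twice the sum over \(p<q\). This gives
\[
    \sum_jw_j(x^{(j)}-\bar x)(z^{(j)}-\bar z)^\top
    =\sum_{p<q}w_pw_q(x^{(p)}-x^{(q)})(z^{(p)}-z^{(q)})^\top .
\]

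For the denominator, use the expansion
\[
    1=\Bigl(\sum_jw_j\Bigr)^2=\sum_jw_j^2+2\sum_{p<q}w_pw_q,
\]
which gives \(1-\sum_jw_j^2=2\sum_{p<q}w_pw_q\).

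Dividing the numerator identity by the denominator identity yields the claim. The only caveat is that the expression is defined only when \(D>0\), that is, when at least two weights are positive. In the paper's setting this holds automatically, since the Gaussian kernel makes all weights positive and \(J\ge2\).

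There is no real obstacle here. The step needing the most care is tracking the symmetry factor of 2 between the ordered sum and the sum over \(p<q\), together with keeping the transpose placement consistent in the non-symmetric cross-covariance.
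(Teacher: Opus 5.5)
Your proof is correct and follows essentially the standard route the paper relies on (the appendix states the lemma and defers details to the supplement). You expand the ordered double sum $\sum_{p,q} w_pw_q(x^{(p)}-x^{(q)})(z^{(p)}-z^{(q)})^\top$ into twice the centered weighted numerator, halve it using the $p\leftrightarrow q$ symmetry, and use $1=(\sum_j w_j)^2=\sum_j w_j^2+2\sum_{p<q}w_pw_q$ for the denominator; your caveat that $1-\sum_j w_j^2>0$ requires at least two positive weights, which holds for the Gaussian kernel with $J\ge 2$, is correct and worth stating.
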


\begin{proof}[Proof of \cref{prop:cross_cluster_weight_suppression}]
Set
\[
    q:=\exp\!\left(-\frac{(d_{i,r,n}^{\mathrm{out}})^2}{2\kappa^2}\right),
    \,
    O:=\sum_{j\notin I_r}\widetilde w_{ij,n}=\sum_{j\notin I_r}
    \exp\left(
        -\frac{d_{ij,n}^2}{2\kappa^2}
    \right)  \leq(J-J_r)q      .
\]
Set $    I:=\sum_{j\in I_r}\widetilde w_{ij,n}$. Because $i\in I_r$ and $\widetilde w_{ii,n}=1$, we have $I\geq1$. Hence
\[
    \varepsilon_{i,r,n}=\frac{O}{I+O}
    \leq\frac{(J-J_r)q}{1+(J-J_r)q}
    =\frac{J-J_r}{J-J_r+q^{-1}}
    \leq(J-J_r)q,
\]
which gives both bounds.
\end{proof}

\begin{proof}[Proof of \cref{prop:covariance_gain_decoupling}]
To lighten notation, write 
\[
\begin{aligned}
    &\varepsilon:=\varepsilon_{i,r,n}, \quad w_p:=w_{ip,n}, \quad w_p^{(r)}:=w_{ip,n}^{(r)}, \quad D_r:=D_{i,r,n}=2\sum_{{p<q,\,p,q\in I_r}}w_p^{(r)}w_q^{(r)},\\
    &D_o:=2\sum_{\substack{1\leq p<q\leq J\\\{p,q\}\not\subseteq I_r}}w_pw_q, \quad \Delta\theta_{pq}:=\theta_n^{(p)}-\theta_n^{(q)}, \quad \Delta g_{pq}:=g_n^{(p)}-g_n^{(q)}
\end{aligned}
\]
For $p\in I_r$,
\begin{equation}
    w_p=(1-\varepsilon)w_p^{(r)}.
    \label{eq:full_restricted_weight_relation}
\end{equation}
Define
\[
P_r:=\sum_{\substack{p<q\\p,q\in I_r}}
    w_p^{(r)}w_q^{(r)}\Delta\theta_{pq}\Delta g_{pq}^\top, \quad
    P_o:=\sum_{\substack{1\leq p<q\leq J\\\{p,q\}\not\subseteq I_r}}
    w_pw_q\Delta\theta_{pq}\Delta g_{pq}^\top .
\]
With $a:=(1-\varepsilon)^2$, \cref{lem:pairwise_weighted_covariance} and
\cref{eq:full_restricted_weight_relation} give
\begin{equation}
    C_{i,r,n}^{\theta g}=\frac{P_r}{D_r},
    \qquad
    C_{i,n}^{\theta g}=\frac{aP_r+P_o}{aD_r+D_o}.
    \label{eq:full_restricted_pairwise_forms}
\end{equation}
The total mass outside $I_r$ is $\varepsilon$, and therefore
\[
    D_o
    =2(1-\varepsilon)\varepsilon+\varepsilon^2
      -\sum_{p\notin I_r}w_p^2
    \leq2\varepsilon.
\]
The diameter bounds further imply
\begin{equation}
    \|P_r\|\leq\frac12 R_{\theta,n} R_{g,n} D_r,
    \qquad
    \|P_o\|\leq\frac12 R_{\theta,n} R_{g,n}D_o
    \leq R_{\theta,n} R_{g,n}\varepsilon.
    \label{eq:pairwise_numerator_bounds}
\end{equation}
Subtracting the two expressions in \cref{eq:full_restricted_pairwise_forms}
and using \cref{eq:pairwise_numerator_bounds} yields
\begin{align*}
    C_{i,n}^{\theta g}-C_{i,r,n}^{\theta g}
    &=\frac{D_rP_o-P_rD_o}{D_r(aD_r+D_o)},\\
    \left\|C_{i,n}^{\theta g}-C_{i,r,n}^{\theta g}\right\|
    &\leq
    \frac{D_r\|P_o\|+\|P_r\|D_o}{aD_r^2}
    \leq\frac{2 R_{\theta,n} R_{g,n}}{(1-\varepsilon)^2D_r}\,\varepsilon.
\end{align*}
This proves \cref{eq:c_theta_g_bound}. Replacing $\theta$ by $g$ in the same
argument gives \cref{eq:c_g_g_bound}:
\[
    \left\|C_{i,n}^{gg}-C_{i,r,n}^{gg}\right\|
    \leq\frac{2R_{g,n}^2}{(1-\varepsilon)^2D_r}\,\varepsilon.
\]

It remains to estimate the gains. Let
$H_0:=C_{i,n}^{gg}+R_N$ and $H_r:=C_{i,r,n}^{gg}+R_N$. Since both covariance
matrices are positive semidefinite,
\[
    \|H_0^{-1}\|,\ \|H_r^{-1}\|\leq\mu_N^{-1},
    \qquad
    H_0^{-1}-H_r^{-1}=H_0^{-1}(H_r-H_0)H_r^{-1}.
\]
Moreover, the pairwise form gives
$\|C_{i,r,n}^{\theta g}\|\leq R_{\theta,n} R_{g,n}/2$. The decomposition
\[
    K_{i,n}-K_{i,r,n}
    =(C_{i,n}^{\theta g}-C_{i,r,n}^{\theta g})H_0^{-1}   \quad+C_{i,r,n}^{\theta g}(H_0^{-1}-H_r^{-1})
\]
therefore gives \cref{eq:covariance_gain_decoupling}
\begin{align*}
    \|K_{i,n}-K_{i,r,n}\|
    &\leq \frac1{\mu_N}
    \left\|C_{i,n}^{\theta g}-C_{i,r,n}^{\theta g}\right\|
    +\frac{\|C_{i,r,n}^{\theta g}\|}{\mu_N^2}
    \left\|C_{i,n}^{gg}-C_{i,r,n}^{gg}\right\|\\
    &\leq
    \left(\frac{2R_{\theta,n} R_{g,n}}{\mu_N}
    +\frac{R_{\theta,n} R_{g,n}^3}{\mu_N^2}\right)
    \frac{\varepsilon}{(1-\varepsilon)^2D_r}.
\end{align*}
\end{proof}

\par\smallskip
\begin{proof}[Proof of \cref{cor:cluster_update_discrepancy}]
The update difference satisfies
\[
    \theta_{n+1}^{(i)}-\widetilde\theta_{n+1}^{(i,r)}
    =(K_{i,n}-K_{i,r,n})(y_n^{(i)}-g_n^{(i)}),
\]
so the first bound follows from \cref{eq:covariance_gain_decoupling}. For the
second, set
\[
    q:=\exp\!\left(-\frac{(d_{i,r,n}^{\mathrm{out}})^2}{2\kappa^2}\right),
    \qquad t:=\frac{\varepsilon_{i,r,n}}{1-\varepsilon_{i,r,n}}.
\]
Using the raw weights and $\sum_{j\in I_r}\widetilde w_{ij,n}\geq1$,
\[
    t=
    \frac{\sum_{j\notin I_r}\widetilde w_{ij,n}}
         {\sum_{j\in I_r}\widetilde w_{ij,n}}
    \leq(J-J_r)q.
\]
Therefore
\[
    \frac{\varepsilon_{i,r,n}}{(1-\varepsilon_{i,r,n})^2}
    =t(1+t)
    \leq(J-J_r)q\bigl[1+(J-J_r)q\bigr].
\]
Substitution into the first bound gives the stated non-asymptotic estimate.
\end{proof}


\bibliographystyle{siamplain}
\bibliography{references}

@article{chen2024efficient,
  title={Efficient, multimodal, and derivative-free bayesian inference with Fisher--Rao gradient flows},
  author={Chen, Yifan and Zhengyu Huang, Daniel and Huang, Jiaoyang and Reich, Sebastian and Stuart, Andrew M},
  journal={Inverse Problems},
  volume={40},
  number={12},
  pages={125001},
  year={2024},
  publisher={IOP Publishing}
}

@article{che2025stable,
  title={Stable derivative free Gaussian mixture variational inference for Bayesian inverse problems},
  author={Che, Baojun and Chen, Yifan and Huan, Zhenghao and Huang, Daniel Zhengyu and Wang, Weijie},
  journal={SIAM Journal on Scientific Computing},
  volume={47},
  number={5},
  pages={A2583--A2608},
  year={2025},
  publisher={SIAM}
}

@article{iglesias2013ensemble,
  title={Ensemble Kalman methods for inverse problems},
  author={Iglesias, Marco A and Law, Kody JH and Stuart, Andrew M},
  journal={Inverse Problems},
  volume={29},
  number={4},
  pages={045001},
  year={2013},
  publisher={IOP Publishing}
}

@article{stuart2010inverse,
  title={Inverse problems: a Bayesian perspective},
  author={Stuart, Andrew M},
  journal={Acta numerica},
  volume={19},
  pages={451--559},
  year={2010},
  publisher={Cambridge University Press}
}

@article{kaipio2007statistical,
  title={Statistical inverse problems: discretization, model reduction and inverse crimes},
  author={Kaipio, Jari and Somersalo, Erkki},
  journal={Journal of computational and applied mathematics},
  volume={198},
  number={2},
  pages={493--504},
  year={2007},
  publisher={Elsevier}
}

@article{eldredge2026practical,
  title={Practical Guide to Estimation and Uncertainty Quantification of Aerodynamic Flows},
  author={Eldredge, Jeff D and Mousavi, Hanieh},
  journal={AIAA Journal},
  volume={64},
  number={5},
  pages={2403--2423},
  year={2026},
  publisher={American Institute of Aeronautics and Astronautics}
}

@article{keats2007bayesian,
  title={Bayesian inference for source determination with applications to a complex urban environment},
  author={Keats, Andrew and Yee, Eugene and Lien, Fue-Sang},
  journal={Atmospheric environment},
  volume={41},
  number={3},
  pages={465--479},
  year={2007},
  publisher={Elsevier}
}

@article{eldredge2024bayesian,
  title={Bayesian inference of vorticity in unbounded flow from limited pressure measurements},
  author={Eldredge, Jeff D and Le Provost, Mathieu},
  journal={Journal of Fluid Mechanics},
  volume={986},
  pages={A18},
  year={2024},
  publisher={Cambridge University Press}
}

@article{colton2003inverse,
  title={Inverse Acoustic and Electromagnetic},
  author={Colton, David},
  journal={Inside out: inverse problems and applications},
  volume={47},
  pages={67},
  year={2003},
  publisher={Cambridge University Press}
}

@article{oliver2008inverse,
  title={Inverse theory for petroleum reservoir characterization and history matching},
  author={Oliver, Dean S and Reynolds, Albert C and Liu, Ning},
  journal={Cambridge, New York},
  year={2008}
}

@article{del2006sequential,
  title={Sequential monte carlo samplers},
  author={Del Moral, Pierre and Doucet, Arnaud and Jasra, Ajay},
  journal={Journal of the Royal Statistical Society Series B: Statistical Methodology},
  volume={68},
  number={3},
  pages={411--436},
  year={2006},
  publisher={Oxford University Press}
}

@article{vousden2016dynamic,
  title={Dynamic temperature selection for parallel tempering in Markov chain Monte Carlo simulations},
  author={Vousden, WD and Farr, Will M and Mandel, Ilya},
  journal={Monthly Notices of the Royal Astronomical Society},
  volume={455},
  number={2},
  pages={1919--1937},
  year={2016},
  publisher={Oxford University Press}
}

@article{paulin2019error,
  title={Error bounds for sequential Monte Carlo samplers for multimodal distributions},
  author={Paulin, Daniel and Jasra, Ajay and Thiery, Alexandre},
  journal={Bernoulli},
  volume={25},
  number={1},
  pages={310--340},
  year={2019},
  publisher={JSTOR}
}

@article{haber2000optimization,
  title={On optimization techniques for solving nonlinear inverse problems},
  author={Haber, Eldad and Ascher, Uri M and Oldenburg, Doug},
  journal={Inverse problems},
  volume={16},
  number={5},
  pages={1263--1280},
  year={2000}
}

@article{givoli2021tutorial,
  title={A tutorial on the adjoint method for inverse problems},
  author={Givoli, Dan},
  journal={Computer Methods in Applied Mechanics and Engineering},
  volume={380},
  pages={113810},
  year={2021},
  publisher={Elsevier}
}

@article{sambridge2002monte,
  title={Monte Carlo methods in geophysical inverse problems},
  author={Sambridge, Malcolm and Mosegaard, Klaus},
  journal={Reviews of Geophysics},
  volume={40},
  number={3},
  pages={3--1},
  year={2002},
  publisher={Wiley Online Library}
}

@article{beven2001equifinality,
  title={Equifinality, data assimilation, and uncertainty estimation in mechanistic modelling of complex environmental systems using the GLUE methodology},
  author={Beven, Keith and Freer, Jim},
  journal={Journal of hydrology},
  volume={249},
  number={1-4},
  pages={11--29},
  year={2001},
  publisher={Elsevier}
}

@incollection{marti2025multi,
  title={Multi-start methods},
  author={Mart{\'\i}, Rafael and Lozano, Jose A and Mendiburu, Alexander and Hernando, Leticia},
  booktitle={Handbook of heuristics},
  pages={211--230},
  year={2025},
  publisher={Springer}
}

@article{larson2019derivative,
  title={Derivative-free optimization methods},
  author={Larson, Jeffrey and Menickelly, Matt and Wild, Stefan M},
  journal={Acta Numerica},
  volume={28},
  pages={287--404},
  year={2019},
  publisher={Cambridge University Press}
}

@article{McInnes2017, doi = {10.21105/joss.00205}, url = {https://doi.org/10.21105/joss.00205}, year = {2017}, publisher = {The Open Journal}, volume = {2}, number = {11}, pages = {205}, author = {McInnes, Leland and Healy, John and Astels, Steve}, title = {hdbscan: Hierarchical density based clustering}, journal = {Journal of Open Source Software} }

@article{campello2015hierarchical,
author = {Campello, Ricardo J. G. B. and Moulavi, Davoud and Zimek, Arthur and Sander, J{\"o}rg},
title = {Hierarchical Density Estimates for Data Clustering, Visualization, and Outlier Detection},
year = {2015},
issue_date = {July 2015},
publisher = {Association for Computing Machinery},
address = {New York, NY, USA},
volume = {10},
number = {1},
issn = {1556-4681},
url = {https://doi.org/10.1145/2733381},
doi = {10.1145/2733381},
journal = {ACM Trans. Knowl. Discov. Data},
month = jul,
articleno = {5},
numpages = {51},
}

@article{kovachki2019ensemble,
  title={Ensemble Kalman inversion: a derivative-free technique for machine learning tasks},
  author={Kovachki, Nikola B and Stuart, Andrew M},
  journal={Inverse Problems},
  volume={35},
  number={9},
  pages={095005},
  year={2019},
  publisher={IOP Publishing}
}

@article{rastrigin1974systems,
  title={Systems of extremal control},
  author={Rastrigin, Leonard Andreevi{\v{c}}},
  journal={Nauka},
  year={1974}
}

@article{dovera2011multimodal,
  title={Multimodal ensemble Kalman filtering using Gaussian mixture models},
  author={Dovera, Laura and Della Rossa, Ernesto},
  journal={Computational Geosciences},
  volume={15},
  number={2},
  pages={307--323},
  year={2011},
  publisher={Springer}
}

@article{gryvill2025bayesian,
  title={Bayesian ensemble kalman filter for gaussian mixture models},
  author={Gryvill, H{\aa}kon and Grana, Dario and Tjelmeland, H{\aa}kon},
  journal={Mathematical Geosciences},
  volume={57},
  number={1},
  pages={153--192},
  year={2025},
  publisher={Springer}
}

@article{anderson2024quantile,
  title={A quantile-conserving ensemble filter framework. Part III: Data assimilation for mixed distributions with application to a low-order tracer advection model},
  author={Anderson, Jeffrey and Riedel, Chris and Wieringa, Molly and Ishraque, Fairuz and Smith, Marlee and Kershaw, Helen},
  journal={Monthly Weather Review},
  volume={152},
  number={9},
  pages={2111--2127},
  year={2024},
  publisher={American Meteorological Society}
}

@article{chan2023potential,
  title={The potential benefits of handling mixture statistics via a bi-Gaussian EnKF: Tests with all-sky satellite infrared radiances},
  author={Chan, Man-Yau and Chen, Xingchao and Anderson, Jeffrey L},
  journal={Journal of Advances in Modeling Earth Systems},
  volume={15},
  number={2},
  pages={e2022MS003357},
  year={2023},
  publisher={Wiley Online Library}
}

@inproceedings{grumitt2024flow,
  title={Flow annealed Kalman inversion for gradient-free inference in Bayesian inverse problems},
  author={Grumitt, Richard DP and Karamanis, Minas and Seljak, Uro{\v{s}}},
  booktitle={Physical Sciences Forum},
  volume={9},
  number={1},
  pages={21},
  year={2024},
  organization={MDPI}
}

@article{pasmans2026ensemble,
  title={Ensemble Kalman filter in latent space using a variational autoencoder pair},
  author={Pasmans, Ivo and Chen, Yumeng and Sebastian Finn, Tobias and Bocquet, Marc and Carrassi, Alberto},
  journal={Quarterly Journal of the Royal Meteorological Society},
  volume={152},
  number={775},
  pages={e70070},
  year={2026},
  publisher={Wiley Online Library}
}

@article{zheng2024ensemble,
  title={Ensemble kalman diffusion guidance: A derivative-free method for inverse problems},
  author={Zheng, Hongkai and Chu, Wenda and Wang, Austin and Kovachki, Nikola and Baptista, Ricardo and Yue, Yisong},
  journal={arXiv preprint arXiv:2409.20175},
  year={2024}
}

@article{wacker2026perspectives,
  title={Perspectives on locally weighted ensemble Kalman methods},
  author={Wacker, Philipp},
  journal={Journal of Computational and Applied Mathematics},
  volume={472},
  pages={116756},
  year={2026},
  publisher={Elsevier}
}

@article{reich2021fokker,
  title={Fokker--Planck particle systems for Bayesian inference: Computational approaches},
  author={Reich, Sebastian and Weissmann, Simon},
  journal={SIAM/ASA Journal on Uncertainty Quantification},
  volume={9},
  number={2},
  pages={446--482},
  year={2021},
  publisher={SIAM}
}

@article{reimann2026robust,
  title={Robust ensemble Kalman filtering under observation noise misspecification via diffusion score matching},
  author={Reimann, Hans and Reich, Sebastian},
  journal={arXiv preprint arXiv:2605.26881},
  year={2026}
}

@article{zhou2026neural,
  title={Neural ensemble Kalman filter: Data assimilation for compressible flows with shocks},
  author={Zhou, Xu-Hui and Beronilla, Lorenzo and Sleeman, Michael K and Hu, Hangchuan and Morzfeld, Matthias and Stuart, Andrew M and Zaki, Tamer A},
  journal={Journal of Computational Physics},
  pages={115136},
  year={2026},
  publisher={Elsevier}
}

@article{ding2020ensemble,
  title={Ensemble Kalman inversion for nonlinear problems: weights, consistency, and variance bounds},
  author={Ding, Zhiyan and Li, Qin and Lu, Jianfeng},
  journal={arXiv preprint arXiv:2003.02316},
  year={2020}
}

@article{subrahmanya2026copula,
  title={A copula-based rank histogram ensemble filter},
  author={Subrahmanya, Amit N and Bessac, Julie and Popov, Andrey A and Sandu, Adrian},
  journal={International Journal for Uncertainty Quantification},
  volume={16},
  number={1},
  year={2026},
  publisher={Begel House Inc.}
}

@article{che2026adaptive,
  title={Adaptive Exponential Integration for Stable Gaussian Mixture Black-Box Variational Inference},
  author={Che, Baojun and Chen, Yifan and Huang, Daniel Zhengyu and Mao, Xinying and Wang, Weijie},
  journal={arXiv preprint arXiv:2601.14855},
  year={2026}
}

@article{frei2013mixture,
  title={Mixture ensemble Kalman filters},
  author={Frei, Marco and K{\"u}nsch, Hans R},
  journal={Computational Statistics \& Data Analysis},
  volume={58},
  pages={127--138},
  year={2013},
  publisher={Elsevier}
}

@article{frei2013bridging,
  title={Bridging the ensemble Kalman and particle filters},
  author={Frei, Marco and K{\"u}nsch, Hans R},
  journal={Biometrika},
  volume={100},
  number={4},
  pages={781--800},
  year={2013},
  publisher={Oxford University Press}
}

@article{chustagulprom2016hybrid,
  title={A hybrid ensemble transform particle filter for nonlinear and spatially extended dynamical systems},
  author={Chustagulprom, Nawinda and Reich, Sebastian and Reinhardt, Maria},
  journal={SIAM/ASA Journal on Uncertainty Quantification},
  volume={4},
  number={1},
  pages={592--608},
  year={2016},
  publisher={SIAM}
}

@article{shen2015modified,
  title={A modified ensemble Kalman particle filter for non-Gaussian systems with nonlinear measurement functions},
  author={Shen, Zheqi and Tang, Youmin},
  journal={Journal of Advances in Modeling Earth Systems},
  volume={7},
  number={1},
  pages={50--66},
  year={2015},
  publisher={Wiley Online Library}
}

@article{papadakis2010data,
  title={Data assimilation with the weighted ensemble Kalman filter},
  author={Papadakis, Nicolas and M{\'e}min, Etienne and Cuzol, Anne and Gengembre, Nicolas},
  journal={Tellus A: Dynamic Meteorology and Oceanography},
  volume={62},
  number={5},
  pages={673--697},
  year={2010},
  publisher={Taylor \& Francis}
}

@article{zhang2018iterative,
  title={An iterative local updating ensemble smoother for estimation and uncertainty assessment of hydrologic model parameters with multimodal distributions},
  author={Zhang, Jiangjiang and Lin, Guang and Li, Weixuan and Wu, Laosheng and Zeng, Lingzao},
  journal={Water Resources Research},
  volume={54},
  number={3},
  pages={1716--1733},
  year={2018},
  publisher={Wiley Online Library}
}

@article{anderson2003local,
  title={A local least squares framework for ensemble filtering},
  author={Anderson, Jeffrey L},
  journal={Monthly weather review},
  volume={131},
  number={4},
  pages={634--642},
  year={2003}
}

@article{metref2014non,
  title={A non-Gaussian analysis scheme using rank histograms for ensemble data assimilation},
  author={Metref, Sammy and Cosme, Emmanuel and Snyder, Chris and Brasseur, Pierre},
  journal={Nonlinear Processes in Geophysics},
  volume={21},
  number={4},
  pages={869--885},
  year={2014},
  publisher={Copernicus Publications G{\"o}ttingen, Germany}
}

@article{mo2019deep,
  title={Deep autoregressive neural networks for high-dimensional inverse problems in groundwater contaminant source identification},
  author={Mo, Shaoxing and Zabaras, Nicholas and Shi, Xiaoqing and Wu, Jichun},
  journal={Water Resources Research},
  volume={55},
  number={5},
  pages={3856--3881},
  year={2019},
  publisher={Wiley Online Library}
}

@article{zheng2023efficient,
  title={Efficient estimation of groundwater contaminant source and hydraulic conductivity by an ILUES framework combining GAN and CNN},
  author={Zheng, Na and Jiang, Simin and Xia, Xuemin and Kong, Weiming and Li, Zhi and Gu, Simin and Wu, Zijun},
  journal={Journal of Hydrology},
  volume={621},
  pages={129677},
  year={2023},
  publisher={Elsevier}
}

@article{zhang2024integration,
  title={Integration of DDPM and ILUES for simultaneous identification of contaminant source parameters and non-Gaussian channelized hydraulic conductivity field},
  author={Zhang, Xun and Jiang, Simin and Zheng, Na and Xia, Xuemin and Li, Zhi and Zhang, Ruicheng and Zhang, Jiangjiang and Wang, Xinshu},
  journal={Water Resources Research},
  volume={60},
  number={9},
  pages={e2023WR036893},
  year={2024},
  publisher={Wiley Online Library}
}

@article{bungert2025polarized,
  title={Polarized consensus-based dynamics for optimization and sampling},
  author={Bungert, Leon and Roith, Tim and Wacker, Philipp},
  journal={Mathematical Programming},
  volume={211},
  number={1},
  pages={125--155},
  year={2025},
  publisher={Springer}
}

@article{gjini2026ensemble,
  title={The ensemble kalman inversion race},
  author={Gjini, Rebecca and Morzfeld, Matthias and Dunbar, Oliver RA and Schneider, Tapio},
  journal={Journal of Advances in Modeling Earth Systems},
  volume={18},
  number={9},
  pages={e2025MS005629},
  year={2026},
  publisher={Wiley Online Library}
}
\end{document}